\newcommand{\wt }{\widetilde}
\newcommand{\pro}{probabilit}
\newcommand{\bfA}{{\mathbf A}}
\newcommand{\bfT}{{\mathbf T}}
\newcommand{\bfR}{{\mathbf R}}
\newcommand{\bfF}{{\mathbf F}}
\newcommand{\bfI}{{\mathbf I}}
\newcommand{\bfX}{{\mathbf X}}
\newcommand{\bfY}{{\mathbf Y}}
\newcommand{\st}{such that}
\newcommand{\std}{\stackrel{\rm d}{\rightarrow}}
\newcommand{\la}{\lambda}
\newcommand{\ds}{distribution}
\newcommand{\beao}{\begin{eqnarray*}}
\newcommand{\eeao}{\end{eqnarray*}}
\newcommand{\beam}{\begin{eqnarray}}
\newcommand{\eeam}{\end{eqnarray}}
\newcommand{\red}{\color{darkred}}
\newcommand{\blue}{\color{darkblue}}
\definecolor{darkblue}{rgb}{.1, 0.1,.8}
\definecolor{darkgreen}{rgb}{0,0.8,0.2}
\definecolor{darkred}{rgb}{.8, .1,.1}
\newcommand{\bco}{\begin{corrolary}}
\newcommand{\eco}{\end{corrolary}}
\newcommand{\E}{\mathbb{E}}
\renewcommand{\P}{\mathbb{P}}
\newcommand{\1}{\mathbf{1}}
\newcommand{\R}{\mathbb{R}}
\newcommand{\N}{\mathbb{N}}
\newcommand{\C}{\mathbb{C}}
\newcommand{\Frechet}{Fr\'{e}chet }
\DeclareMathOperator{\e}{e}
\newcommand{\X}{{\mathbf X}}
\newcommand{\Y}{{\mathbf Y}}
\newcommand{\dint}{\,\mathrm{d}}
\newcommand{\twonorm}[1]{\|#1\|_2}
\newcommand{\vep}{\varepsilon}
\newcommand{\nto}{n \to \infty}
\newcommand{\lhs}{left-hand side}
\newcommand{\rhs}{right-hand side}
\newcommand{\rv}{random variable}
\newcommand{\tr}{\operatorname{tr}}
\newcommand{\diag}{\operatorname{diag}}
\newcommand{\MP}{Mar\v cenko--Pastur }
\newcommand{\runs}{\operatorname{runs}}
\newcommand{\length}{\operatorname{length}}
\newcommand{\clt}{central limit theorem}
\def\tag{\refstepcounter{equation}\leqno }
\newtheorem{lemma}{Lemma}[section]
\newtheorem{theorem}[lemma]{Theorem}
\newtheorem{proposition}[lemma]{Proposition}
\newtheorem{definition}[lemma]{Definition}
\newtheorem{corollary}[lemma]{Corollary}
\newtheorem{example}[lemma]{Example}
\newtheorem{remark}[lemma]{Remark}
\newcommand{\cid}{\stackrel{d}{\rightarrow}}
\newcommand{\cip}{\stackrel{\P}{\rightarrow}}
\newcommand{\cas}{\stackrel{\rm a.s.}{\rightarrow}}
\newcommand{\eid}{\stackrel{d}{=}}
\newcommand{\var}{{\rm var}}
\newcommand{\cov}{{\rm cov}}
\newcommand{\as}{{\rm a.s.}}
\newcommand{\con}{convergence}
\newcommand{\seq}{sequence}
\newcommand{\asy}{asymptotic}
\begin{document}
%\today
\bibliographystyle{acm}
\title[Convergence of the extreme eigenvalues of the sample correlation matrix]
{Almost sure convergence of the largest and smallest eigenvalues of high-dimensional sample correlation matrices}
\thanks{Johannes Heiny's and Thomas Mikosch's research is partly supported by the Danish Research Council Grant DFF-4002-00435 ``Large random matrices with heavy tails and dependence''. Johannes Heiny thanks the Villum Foundation for the grant 11745 ``Ambit fields: Probabilistic properties and statistical inference''.}
\author[Johannes Heiny]{Johannes Heiny}
\author[Thomas Mikosch]{Thomas Mikosch}
%\author[Xiaolei Xie]{Xiaolei Xie}
\address{Department of Mathematics,
Ruhr University Bochum,
Universit\"atsstrasse 150,
D-44801 Bochum,
Germany}
\email{johannes.heiny@rub.de}
\address{Department  of Mathematical Sciences,
University of Copenhagen,
Universitetsparken 5,
DK-2100 Copenhagen,
Denmark}
\email{mikosch@math.ku.dk\,, www.math.ku.dk/$\sim$mikosch}
\begin{abstract}{In this paper, we show that the largest and smallest eigenvalues 
of a sample correlation matrix stemming from $n$ independent observations of a $p$-dimensional 
time series with iid components converge almost surely to $(1+\sqrt{\gamma})^2$ and $(1-\sqrt{\gamma})^2$, 
respectively, as $\nto$, if $p/n\to \gamma \in (0,1]$ and the truncated variance of the entry 
distribution is ``almost slowly varying'', a condition we describe via moment 
properties of self-normalized sums. Moreover, the empirical spectral distributions 
of these sample correlation matrices converge weakly, with probability $1$, to the \MP law, 
which extends a result in \cite{bai:zhou:2008}. We compare the behavior of the eigenvalues of the 
sample covariance and sample correlation matrices and argue that the latter seems more robust, 
in particular in the case of infinite fourth moment. We briefly address some practical issues 
for the estimation of extreme eigenvalues in a simulation study.   
\par
In our proofs we use the method of moments combined with a Path-Shortening Algorithm, 
which efficiently uses the structure of sample correlation matrices, 
to calculate precise bounds for matrix norms. We believe that this new approach could be of further use in random matrix theory. }
\end{abstract}
\keywords{Sample correlation matrix, infinite fourth moment, largest eigenvalue, smallest eigenvalue, 
spectral distribution, sample covariance matrix, self-normalization, regular variation, combinatorics.}
\subjclass{Primary 60B20; Secondary 60F05 60F10 60G10 60G55 60G70}

\maketitle
%\tableofcontents
\section{Introduction and notation}\label{sec:sdlgj}

%{\red Discuss $p/n \to 0$.} 
%{\blue Inconsistent use of referenes, sometimes with or without names. I cannot see a general rule.}
In modern statistical analyses one is often faced with large data sets where both the dimension of the observations and the sample size are large. The dramatic increase and improvement of computing power and data collection devices have triggered the necessity to study and interpret the sometimes overwhelming amounts of data in an efficient and tractable way. Huge data sets arise naturally in wireless communication, finance, natural sciences and genetic engineering.
For such data one commonly studies the dependence structure via covariances and correlations which can be estimated by their sample analogs. {\em Principal component analysis}, for example, uses an orthogonal transformation of the data such that only a few of the resulting vectors explain most of the variation in the data. The empirical variances of these so-called {\em principal component vectors} are the largest eigenvalues of the {\em sample covariance or correlation matrix}.  

Throughout this paper we consider the $p\times n$ data matrix
\begin{equation*}
\X=\X_n=(X_{it})_{i=1,\ldots,p; t=1,\ldots,n}
\end{equation*}
of identically distributed entries $(X_{it})$ with generic element $X$, where we assume $\E[X]=0$ and $\E[X^2]=1$ if the first and second moments of $X$ are finite, respectively. A column of $\X$ represents an observation of a $p$-dimensional time series. 

Random matrix theory provides a great variety of results on the ordered eigenvalues
\begin{equation}\label{eq:deflambda}
\la_{(1)} \ge \cdots \ge\la_{(p)}\,,
\end{equation}
of the (non-normalized) {\em sample covariance matrix} $\X\X'$. Here we will only discuss the case $p=p_n \to \infty$ and, unless stated otherwise, we assume the growth condition  
\begin{equation}\label{Cpn}
\lim_{\nto} \frac{p_n}{n} = \gamma\in (0,1]\,. \tag{$G_{\gamma}$}
\end{equation} 
For the finite $p$ case, we refer to \cite{anderson:1963,muirhead,janssen:mikosch:rezapour:xie:2016}.
When studying the asymptotic properties of estimators under \eqref{Cpn}
one often obtains results that dramatically differ from the standard $p$ fixed, $\nto$ case, in which the spectrum of $(n^{-1}\X\X')$ converges to its population covariance spectrum. In 1967, Mar\v cenko and Pastur \cite{marchenko:pastur:1967} observed that even in the case of iid entries $(X_{it})$ with $\E[X^2]=1$ the eigenvalues $(\lambda_{(i)}/n)$ do not concentrate around $1$. For more examples, see \cite[Chapter 1]{bai:silverstein:2010} and \cite{elkaroui:2009}. Typical applications where \eqref{Cpn} seems reasonable are discussed in \cite{johnstone:2001,donoho:2000}. 
\par

In comparison with $(\lambda_{(i)})$, much less is known about the ordered eigenvalues
\begin{equation*}
\mu_{(1)} \ge \cdots \ge\mu_{(p)}\,
\end{equation*} 
of the {\em sample correlation matrix} $\bfR=\Y\Y'$ with entries
\begin{equation}\label{eq:corrR}
R_{ij}=\sum_{t=1}^n \frac{X_{it}X_{jt}}{\sqrt{D_i} \sqrt{D_j}} = \sum_{t=1}^n Y_{it}Y_{jt}\,, \quad i,j=1,\ldots,p\,.
\end{equation}
In this paper we will often make use of the notation $\Y=(Y_{it})=(X_{it}/\sqrt{D_i})$ and
\begin{equation}\label{eq:D}
D_i=D_i^{(n)}=\sum_{t=1}^n X_{it}^2\,, \qquad
i=1,\ldots,p;\;  n\ge 1\,.
\end{equation} 
Note that the dependence of $(\lambda_{(i)})$ and $(\mu_{(i)})$ on $n$ is suppressed in the notation.

%---------------------------------------------------------------------------------------
\subsection{The case $(X_{it})$ iid, $\E[X^4]<\infty$ and $\E[X^2]=1$}  

In this case the behaviors of the eigenvalues of the sample covariance matrix $\X\X'$ and the sample correlation matrix $\bfR$ are closely intertwined. 
\par 

For any random $p\times p$ matrix $\bfA$ with real eigenvalues $\lambda_1(\bfA),\ldots,\lambda_p(\bfA)$ the {\em empirical spectral distribution} is defined by 
\begin{equation*}
F_{\bfA}(x)= \frac{1}{p}\; \sum_{i=1}^p \1_{\{ \lambda_i(\bfA)\le x \}}, \qquad x\in  \R\,.
\end{equation*}
Many functionals of the eigenvalues $\lambda_1(\bfA),\ldots,\lambda_p(\bfA)$ can be expressed in terms of $F_{\bfA}$ \cite{bai:fang:liang:2014}, for instance
\begin{equation*}
\det \bfA = \prod_{i=1}^p \lambda_i(\bfA) = \exp \Big( p \int_{0}^{\infty} \log x \;\dint F_{\bfA}(x) \Big)\,.
\end{equation*}
A major problem in random matrix theory is to find the weak limit of $(F_{\bfA_n})$ for suitable sequences $(\bfA_n)$; see for example \cite{bai:silverstein:2010,yao:zheng:bai:2015} for more details. By weak convergence of a sequence of probability distributions $(F_{\bfA_n})$ 
to a \pro y \ds\ $F$, we mean $\lim_{\nto} F_{\bfA_n}(x)=F(x) \,\as$ for all continuity points of $F$.
In this context a useful tool is the {\em Stieltjes transform}
of the empirical spectral distribution $F_{\bfA}$:
\begin{equation*}
s_{\bfA}(z)= \int_{\R} \frac{1}{x-z} \dint F_{\bfA}(x) = \frac{1}{p} \tr(\bfA -z \bfI)^{-1}\,, \quad z\in \C^+\,,
\end{equation*}
where $\C^+$ denotes the complex numbers with positive imaginary part. 
Weak convergence of $(F_{\bfA_n})$ to $F$ is equivalent to $s_{F_{\bfA_n}}(z) \to s_F(z)$ a.s. for all $z\in \C^+$.

Under the growth condition \eqref{Cpn}, the sequence of empirical spectral distributions of the normalized sample covariance matrix $n^{-1} \X\X'$ converges weakly to the \MP law with density
\begin{eqnarray}\label{eq:MP}
f_\gamma(x) = 
\left\{\begin{array}{cc}
\frac{1}{2\pi x\gamma} \sqrt{(b-x)(x-a)} \,, & \mbox{if } a\le x \le b, \\
 0 \,, & \mbox{otherwise,}
\end{array}\right.
\end{eqnarray}\noindent
where $\gamma\in (0,1]$, $a=(1 -\sqrt{\gamma})^2$ and $b=(1 +\sqrt{\gamma})^2$. This classical result is sometimes referred to as \MP theorem \cite{marchenko:pastur:1967}. Informally, the histogram of $(\lambda_{(i)}/n)$ is asymptotically non-random and the limiting shape depends only on the fraction $p/n$. For an illustration, see Figure~\ref{fig:alpha6}.
\par

The \MP law has $k$-th moment
\begin{equation}\label{eq:momentsmp}
\beta_k=\beta_k(\gamma)=\int_a^b x^k f_\gamma(x) \dint x=\sum_{r=1}^{k} \frac{1}{r} \binom{k}{r-1}\binom{k-1}{r-1}\gamma^{r-1}\,,\quad k\ge 1\,,
\end{equation}
and Stieltjes transform 
\beam\label{eq:stieltjestransform}
s(z)
=\int_{\R} \frac{1}{x-z}f_\gamma(x) \dint x 
= \frac{1-\gamma -z +\sqrt{(1+\gamma-z)^2-4\gamma}}{2 \gamma z} \,;
%+ \frac{1-\gamma}{\gamma z} \1_{\{\gamma>1\}}\,.
\eeam
see \cite[Chapter~3]{bai:silverstein:2010} or \cite{bai:fang:liang:2014,yao:zheng:bai:2015}.
\par

The \as~behavior of the extreme eigenvalues is more involved and therefore it has received significant attention in the literature. From the \MP theorem one can infer
\begin{equation}\label{eq:efsdf}
\limsup_{\nto} n^{-1}\lambda_{(p)} \le (1-\sqrt{\gamma})^2 \le (1+\sqrt{\gamma})^2 \le \liminf_{\nto} n^{-1}\lambda_{(1)}\, \quad \as
\end{equation}
The finiteness of the fourth moment of $X$ is necessary for the almost sure convergence of  $\lambda_{(1)}/n$; see \cite{baisilv}.
If $\E[X^4]<\infty$, one has (see \cite{bai:silverstein:2010}) 
\begin{equation}\label{eq:drtgdrghdr}
n^{-1} \la_{(1)} \to (1+\sqrt{\gamma})^2\, \quad \text{ and } \quad n^{-1} \la_{(p)} \to (1-\sqrt{\gamma})^2\, \quad \as
\end{equation}
\par

The minimal moment requirement for the convergence of the normalized smallest eigenvalue, however, was an open question for a long time.
Recently, it was proved in \cite{tikhomirov:2015} that $n^{-1} \la_{(p)} \to (1-\sqrt{\gamma})^2\, \as$ only requires a finite second moment.
Under suitable moment assumptions $\la_{(1)}$ and $\la_{(p)}$ possess {\em Tracy--Widom} fluctuations around their almost sure limits. For instance, 
the paper \cite{johnstone:2001} complemented \eqref{eq:drtgdrghdr} by the corresponding \clt\ in the special case of iid standard normal 
entries:
\beam\label{eq:tc}
 n^{2/3}\,\dfrac{(\sqrt{\gamma})^{1/3}}{\big(1+\sqrt{\gamma}\big)^{4/3}}\Big(\dfrac {\la_{(1)}}{n} -
\big(1+\sqrt{\tfrac pn }\big)^2\Big)
\std \xi\,,
\eeam
where the limiting \rv\ has a {\em Tracy--Widom \ds} of order 1. Notice that the centering
$\big(1+\sqrt{\tfrac pn }\big)^2$ can in general not be replaced by $(1+\sqrt{\gamma})^2$.
This \ds\ is ubiquitous in random matrix theory.
%It is defined via some ordinary differential equation; we refer to \cite{tracy:widom:2012} for a definition and properties.
Its distribution function $F_1$ is given by
\begin{equation*}
F_1(s) = \exp\Big\{
  -\frac{1}{2} \int_{s}^\infty [
    q(x) + (x - s) q^2(x)
 ] \dint x
\Big\}\,,
\end{equation*}
where $q(x)$ is the unique solution to the Painlev\'e II differential
equation
\begin{equation*}
  q''(x) = xq(x) + 2 q^3(x)\,,
\end{equation*}
where $ q(x)\sim {\rm Ai}(x)$ as $x \to \infty$ and Ai$(\cdot)$ is the Airy kernel; see Tracy and Widom~\cite{tracy:widom:2012} for details.
%We notice that the rate $n^{2/3}$ compares favorably to the $\sqrt{n}$-rate in the classical \clt\ for sums of iid finite variance \rv s.
\par

Sometimes practitioners would like to know ``to which extent the random matrix results would hold if one were concerned with sample correlation matrices and not sample covariance matrices \cite{elkaroui:2009}''.
A partial answer is that the aforementioned results also hold for the sample correlation matrix $\bfR$ and its eigenvalues $\mu_{(1)}\ge\cdots \ge \mu_{(p)}$. 
%The results on sample covariance matrices can be used to draw conclusions about the behavior of the eigenvalues of the sample correlation matrix.
With $\bfF= \diag (1/D_1,\ldots,1/D_p)$, we have $\bfR={\bfF}^{1/2} {\X}{\X}'{\bfF}^{1/2}$ which has the same eigenvalues as ${\X}{\X}'{\bfF}$. Weyl's inequality (see \cite{bhatia:1997}) yields
\begin{equation}\label{eq:lamu}
\begin{split}
\max_{i=1,\ldots,p} |\mu_{(i)}- n^{-1} \la_{(i)}| &\le \twonorm{{\X}{\X}'{\bfF}-n^{-1}{\X}{\X}' }\\
&\le n^{-1} \twonorm{{\X}{\X}'}  \twonorm{n \bfF- \bfI}\\
&= n^{-1} \la_{(1)} \max_{i=1,\ldots,p} \Big| \frac{n}{D_i} -1 \Big|\,,
\end{split}
\end{equation}
where for any matrix $\bfA$, $\twonorm{\bfA}$ denotes its spectral norm, i.e., its largest singular value.

Lemma 2 in \cite{bai:yin:1993} implies that $\E[X^4]<\infty$ is equivalent to
\begin{equation*}
\max_{i=1,\ldots,p} \Big| \frac{n}{D_i} -1 \Big| \cas 0\,,
\end{equation*}
while $n^{-1} \la_{(1)}\to (1+\sqrt{\gamma})^2~ \as$ Hence, $\max_{i=1,\ldots,p} |\mu_{(i)}- n^{-1} \la_{(i)}| \to 0~ \as$
This approach was used in \cite{jiang:2004,xiao:zhou:2010} to derive 
\begin{equation}\label{eq:drtgdrghdr1}
\mu_{(1)} \to (1+\sqrt{\gamma})^2\, \quad \text{ and } \quad  \mu_{(p)} \to (1-\sqrt{\gamma})^2\, \quad \as
\end{equation}
%Note that due to self-normalization of the sample correlation matrix we do not have to require $\E[X^2]=1$ which was needed for \eqref{eq:drtgdrghdr}. If $\E[X^2]\neq 1$, \eqref{eq:drtgdrghdr} would have to be adjusted to
%\begin{equation*}
%n^{-1} \la_{(1)} \to (1+\sqrt{\gamma})^2\E[X^2]\, \quad \text{ and } \quad n^{-1} \la_{(p)} \to (1-\sqrt{\gamma})^2\E[X^2]\, \quad \as
%\end{equation*}
If the assumption $\E[X^4]<\infty$ is weakened to $\lim_{\nto} n\,\P(X^4> n)=0$, the paper \cite{bai:yin:1993} proves that $n^{-1} \la_{(1)}\cip (1+\sqrt{\gamma})^2$ and $\max_{i=1,\ldots,p} \big| n/D_i -1 \big| \cip 0$. As a consequence, the limit results for $\mu_{(1)}$ and $\mu_{(p)}$ hold in probability instead of \as
\par
Distributional limit results have been derived for the appropriately centered and normalized eigenvalues of sample correlation matrices. The authors of \cite{bao:pan:zhou:2012} assumed iid, symmetric entries $X_{it}$ and that there exist positive constants $C,C'$ such that $\P(|X|\ge t^C)\le \e^{-t}, t\ge C'$. They showed \eqref{eq:tc} with $\lambda_{(1)}/n$ replaced by $\mu_{(1)}$.
%\beao
%n^{2/3}\,\dfrac{(\sqrt{\gamma})^{1/3}}{\big(1+\sqrt{\gamma}\big)^{4/3}}\Big(\mu_{(1)} -
%\big(1+\sqrt{\tfrac pn }\big)^2\Big)
%\std \xi\,,
%\eeao
A similar limit result holds for $\mu_{(p)}$.

\subsection{The case $(X_{it})$ iid and $\E[X^4]=\infty$}

Asymptotic theory for the eigenvalues of $\X\X'$ in the case of an entry distribution with infinite fourth moment was studied in \cite{soshnikov:2004,soshnikov:2006,auffinger:arous:peche:2009} in the cases when $p/n\to \gamma \in (0,\infty)$, while the authors of \cite{davis:mikosch:heiny:xie:2015,heiny:mikosch:2015:iid} allowed nearly arbitrary growth of the dimension $p$. 
In their model, the entries of $\X$ are regularly varying with index $\alpha>0$, implying that
\begin{equation}\label{eq:reg}
\P(|X|>x) = x^{-\alpha} L(x)\,,
\end{equation}
for a slowly varying function $L$. For $\alpha \in (0,4)$,  which implies an infinite fourth moment, they showed that $(a_{np}^{-2} \la_{(1)})$ converges to a \Frechet distributed random variable $\eta_{\alpha/2}$ with parameter $\alpha/2$ while $a_{np}^{-2} \la_{(p)}\cip 0$. Here the normalizing sequence $(a_n)$  is defined via $\P(|X|>a_n)\sim n^{-1}$, hence $n/a_{np}^{2}\to 0$. 

%If $\gamma=1$, the sample covariance matrix is almost surely singular. 
To illustrate the stark contrast between the cases $\alpha>4$ and $\alpha<4$, assume \eqref{Cpn} and $\E[X]=0$ if $\E[|X|]<\infty$. Then it follows from \eqref{eq:drtgdrghdr} that
\begin{equation*}
\begin{split}
\frac{\la_{(p)}}{\la_{(1)}} &\cas \frac{(1-\sqrt{\gamma})^2}{(1+\sqrt{\gamma})^2} \quad \text{ if } \alpha >4\,,\\
\frac{a_{np}^2}{n} \frac{\la_{(p)}}{\la_{(1)}} &\cid \frac{(1-\sqrt{\gamma})^2}{\eta_{\alpha/2}} \quad \text{ if } \alpha \in (2,4)\,,\\
\frac{a_{np}^2}{n} \frac{\la_{(p)}}{\la_{(1)}} &\cas 0 \quad \text{ if } \alpha \in (0,2)\,,
\end{split}
\end{equation*}
where the rate $a_{np}^2/n\to \infty$ in the last line can even be increased. 
To the best of our knowledge, a suitable normalization $(b_n)$ such that $(b_n \lambda_{(p)})$ has a nontrivial limit is not available when $ \alpha \in (0,2)$. 

Under \eqref{Cpn} the asymptotic behavior of the eigenvalues of sample correlation matrices can be very different from that of sample covariance matrices, especially for an entry distribution with infinite fourth moment. If $\alpha\in (2,4)$, the \MP theorem and Theorem 2.3 in \cite{bai:zhou:2008} assert that $(F_{n^{-1}\X\X'})$ and $(F_{\bfR})$  converge weakly to the \MP law. From \cite{baisilv} it is known that $\limsup_n \lambda_{(1)}/n =\infty \,\as$ 
\par

For $\E[X^4]=\infty$, the approach to sample correlation matrices from \eqref{eq:lamu} fails. No limit results for $\mu_{(1)}$ or $\mu_{(p)}$ seem to be available in the literature at this point, although Theorem 2.3 in \cite{bai:zhou:2008} ensures the weak convergence of the empirical spectral distribution $F_{\bfR}$ to the \MP law if $X$ is in the domain of attraction of the normal distribution. Analogously to \eqref{eq:efsdf}, the weak limit of $(F_{\bfR})$ provides a first idea what the limits of the extreme eigenvalues might be. 

\subsection{$(X_{it})$ identically distributed, but dependent}
For practical purposes it is important to work with arbitrary population covariance matrices and not just $n^{-1} \E[\X\X']=\bfI$. Based on well understood results in the iid case, numerous generalizations and estimation techniques have been developed. For many models the limiting spectral distribution can only be characterized in terms of an integral equation (=\MP equation) for its Stieltjes transform. Explicit solutions are more involved; see the monographs \cite{bai:silverstein:2010,bai:fang:liang:2014,yao:zheng:bai:2015}. Over the last couple of years significant progress on limiting spectral distributions for dependent time series was achieved; see for example %by Banna, Merlev{\`e}de and Peligrad 
\cite{banna:merlevede:peligrad:2015,banna:merlevede:2015,banna:2016}.
Since the sample covariance matrix is a poor estimator for the population covariance matrix in high dimension, a different approach to the fundamental problem of estimating population eigenvalues is needed. In \cite{elkaroui:purdom:2016} the authors find that the bootstrap works for the top eigenvalues if they are sufficiently separated from the bulk. Among others, El Karoui \cite{elkaroui:2008} proposed to use the \MP equation, which basically requires more insight into the empirical spectral distribution and its support. This was achieved in \cite{dobriban:2015}, where an algorithm for calculating the spectral distribution based on certain approximate integral equations for its Stieltjes transform was presented.

In view of \cite{davis:pfaffel:stelzer:2014,davis:mikosch:pfaffel:2015,davis:mikosch:heiny:xie:2015} the behavior of the top eigenvalues is reasonably well understood in the case of linear dependence among the $X_{it}$ and $\E[X^4]=\infty$. 
If $\E[X^4]<\infty$, similar arguments to \eqref{eq:lamu} can be developed to show that methods for sample covariance matrices can be applied to sample correlation matrices; see for example \cite{elkaroui:2009}. Theorem 1 in \cite{elkaroui:2009} proves that if the spectral norm of the population correlation matrix is uniformly bounded and $\E[X^4 (\log X)^{2+\vep}]<\infty$, then the spectral properties of $\bfR$ and $n^{-1} \X\X'$ are asymptotically the same. In particular, if $\lambda_{(1)}/n\cas c$, then $\mu_{(1)}\cas c$.

For the sake of completeness we mention that the study of non-asymptotic high-dimensional sample covariance matrices was subject to an intense line of research in the last years. Good references are \cite{srivastava:vershynin:2013,adamczak:litvak:pajor:jaegermann:2010,adamczak:litvak:pajor:jaegermann:2011,yao:zheng:bai:2015}.

\subsection{About this paper}
In Section~\ref{sec:2} we introduce the basic assumptions of this paper and discuss their meaning.
The main results are given in Section~\ref{sec:3}.
We show that the limiting spectral distribution of the 
sample correlation matrices is the \MP law (Theorem~\ref{thm:mpcorrelation}) 
and that the extreme eigenvalues converge  \as~to the endpoints of the limiting support (Theorem~\ref{thm:mu1convergence}) provided $\bfX$ has iid entries \st\
their truncated variance is ``almost slowly varying''. 
In this sense, the limiting spectral distribution of sample correlation matrices is universal. 
A similar kind of universality holds for the limiting spectral distribution of sample covariance matrices given a finite variance, while the asymptotic behavior of their extreme eigenvalues is totally different if the fourth moment is infinite.
Thus the eigenvalues of sample correlation matrices exhibit a  ``more robust'' behavior than their sample covariance analogs. 
This is perhaps not surprising in view of the {\em self-normalizing property} of sample correlations.
Self-normalization also has the advantage that one does not have to worry about the correct normalization.
This is a crucial problem in the study of sample covariance matrices in the case of an infinite fourth moment 
where one needs a normalization stronger than the classical one.
We conclude Section~\ref{sec:3} with a small simulation study which shows that the \asy\ results work nicely.
\par
We continue with some technical results in Section~\ref{sec:5.3}. These are of independent interest
because they provide a {\em Path-Shortening Algorithm} for the calculation of bounds for the very high moments of 
$\mu_{(1)}$. We believe that this technique is novel and will be of further use for proving results
in random matrix theory. The proofs of our main results Theorems~\ref{thm:mu1convergence} and \ref{thm:mpcorrelation}
are given in Sections~ \ref{sec:5.2} and \ref{sec:6}, respectively. Both proofs heavily depend on the techniques 
developed in   Section~\ref{sec:5.3}. We conclude with an Appendix which contains some auxiliary analytical results.
\par
Condition \eqref{eq:assumptionq} is crucial for the proof of Theorem~\ref{thm:mpcorrelation}. In Section~\ref{sec:2}
we discuss this condition and find out that it is very close to condition \eqref{eq:condX}
which in turn is very close (but not equivalent) to membership of the \ds\ of $X$ in the domain of attraction
of the Gaussian law. We conjecture that the statement of Theorem~\ref{thm:mpcorrelation} may be proved only under \eqref{eq:condX}.

%--------------------------------------------------------------------------------
\section{Assumptions}\label{sec:2}\setcounter{equation}{0}
In this section we will present some distributional assumptions and discuss their meaning.
We assume that $(X_{it})$ is an iid field with generic element $X$. In order to exclude the degenerate case we assume $\var(X)>0$. Recall the notation
\begin{equation}\label{eq:Yit}
Y_{it}=\frac{X_{it}}{\sqrt{D_i}}\,, \quad i=1,\ldots,p;\, t=1,\ldots,n\,.
\end{equation}
For ease of notation we will sometimes write $(Y_1, \ldots, Y_n)= (Y_{11}, \ldots, Y_{1n})$, $Y=Y_1$ and $D=D_1$.
\subsection{Domain of attraction type-condition for the Gaussian law}
One of the basic assumptions in this paper is
\beam\label{eq:condX}
\E \big[ Y_{1}Y_{2} \big] = o(n^{-2}) \quad  \text{ and } \quad  \E \big[ Y_{1}^4 \big] = o(n^{-1})\,,\qquad \nto\,.
\eeam
In \cite{gine:goetze:mason:1997} it was proved that condition \eqref{eq:condX} holds if the \ds\ of 
$X$ is in the domain of attraction of the normal law, which is equivalent to $\E[X^2 \1_{\{|X|\le x\}}]$ being slowly varying. 
\par
The converse implication is not valid. Indeed, let $h(\cdot)$ be a positive function such that $0<c_1 = \liminf_{x \to \infty}h(x) <\limsup_{x \to \infty}h(x) =c_2<\infty$ and consider a symmetric random variable $X$ with tail $\P(X>x)=\P(X<-x)=x^{-2} h(|x|)/2$ for $x$ sufficiently large. Then we have
\begin{equation*}
c_1 = \liminf_{x \to \infty} \frac{\P(|X|>x)}{x^2} <\limsup_{x \to \infty}\frac{\P(|X|>x)}{x^2} =c_2\,,
\end{equation*} 
and therefore $\E[X^2 \1_{\{|X|\le x\}}]$ is not slowly varying, or, equivalently, the \ds\ of $X$ is not in the domain of attraction of the normal law, but \eqref{eq:condX} is valid as a domination argument shows.

Jonsson~\cite{jonsson:2010} proved that $\E[(Y_1 + \cdots + Y_n)^2]\ge 1$, with equality if and only if $X$ is symmetric.
He also gave an explicit expression for the mixed moment
\begin{equation*}
\E[Y_1 Y_2]=  \int_0^\infty (\E[X\e^{-\la X^2}])^2 (\E[\e^{-\la X^2}])^{n-2} \dint \la\,, \quad n\ge 2\,. 
\end{equation*}
In view of the identity $\E[(Y_1 + \cdots + Y_n)^2]= 1+n(n-1)\, \E[Y_1 Y_2]$, one makes the interesting observation that $\E[Y_1 Y_2]$ is always nonnegative.

%For large $n$,  $Y_1^2$ is less than 1 with high probability. Therefore $\E \big[ Y_{1}^3Y_{2} \big]$ will be of smaller magnitude than $\E[Y_1 Y_2]$ and we could have required $\E \big[ Y_{1}Y_{2} \big]=o(n^{-2})$ in \eqref{eq:condX}, which holds, for example, for distributions in the domain of attraction of the normal law. 

\subsection{Condition~\eqref{eq:assumptionq}}\label{sec:cond}
This condition will be crucial for the proofs in this paper:\\[1mm]
{\em There exists a sequence $q=q_n\to \infty$ such that for some integer sequence $k=k_n$ 
with $k/\log n \to \infty$ we have $(k^3 q)/n \to 0$, and the moment inequality
\begin{equation}\label{eq:assumptionq}
\E[ Y_{1}^{2m_1}\cdots Y_{r}^{2m_r} ] \le 
 \frac{q_n}{n} \, \E[ Y_{1}^{2m_1}\cdots Y_{r-1}^{2m_{(r-1)}}Y_{r}^{(2m_r-2)} ]\, \tag{$C_q$}
\end{equation}
holds for $1\le r\le \ell-1$ and any positive integers $m_1,\ldots,m_r$ satisfying $m_1+\cdots +m_r=\ell$, where $\ell \le k$.}\\[1mm]
Next we shed some light on this condition. It turns out to be closely related to \eqref{eq:condX}. Indeed, assume \eqref{eq:assumptionq}. Iteration of \eqref{eq:assumptionq} for any fixed $\ell$ yields
\beao
\E[ Y_{1}^{2m_1}\cdots Y_{r}^{2m_r} ] \le  \big(\frac{q_n}{n}\big)^{\ell-r} \E[ Y_{1}^{2}\cdots Y_{r}^{2} ]
%\frac 1 {n(n-1)\cdots (n-r+1)} 
\sim \frac{q_n^{\ell-r}}{n^{\ell}} , \quad \nto\,.
\eeao
In particular, $n\,\E[Y_{1}^4]\le q_n/n\le (\log n)^{-3}$. Thus,  \eqref{eq:assumptionq} provides some precise rate at which 
$n\,\E[Y_{1}^4]$ converges to zero.
 Condition \eqref{eq:assumptionq} implies that 
\begin{equation*}
\lim_{\nto} (\log n)^3 \,n\,\E \big[ Y_{1}^4 \big] = 0\,,
\end{equation*}
which is satisfied for regularly varying distributions with index $\alpha>2$. 
%Then we have
%\begin{equation*}
%\begin{split}
%\E[Y_{1}^4] &\sim n^{k-2} \E[Y_{1}^4 Y_{2}^2 \cdots Y_{k-1}^2]
%\le n^{k-2} \frac{q_n}{n}\E[Y_{1}^2 Y_{2}^2 \cdots Y_{k-1}^2]
%\sim q_n n^{-2}\,,
%\end{split}
%\end{equation*}
%which implies $\E[Y_{1}^4] = o(n^{-1})$. 
\par
Moreover, \eqref{eq:assumptionq} does not hold if $\vep=\liminf_{\nto} n\,\E [ Y_{1}^4 ] >0$.
If \eqref{eq:assumptionq} were valid we would have for large $n$,
\begin{equation*}
\vep/2 \le n\,\E[Y_1^4] \le \frac{q_n}{n-1} \, n(n-1) \E[Y_1^2Y_2^2]\le \frac{q_n}{n-1}\to 0\,.
\end{equation*}
For example, Proposition~1 in \cite{mason:zinn:2005} asserts that the \ds\ of $X^2$ is in the 
domain of attraction of an $\alpha/2$-stable distribution with $0< \alpha <2$ if and only if
\begin{equation}\label{eq:limita<2}
\lim_{\nto} n\,\E[Y_{1}^4] =1-\frac{\alpha}{2}\,,
\end{equation}
hence \eqref{eq:assumptionq} does not hold if $|X|$ has a regularly varying tail with index $0<\alpha<2$.
\par
%{\blue Where is this needed? It was said before.} 
%{\green If $X$ is symmetric and in the domain of attraction of the normal law, we have
%\begin{equation*}
%\begin{split}
%\E[ ( Y_{1}+\cdots+Y_{n})^4 ]&=
%\sum_{r=1}^4 \sum_{m_1+\cdots+m_r=4; m_i\ge 1} \binom{n}{r} \binom{4}{m_1,\ldots,m_r} \E[ Y_{1}^{m_1}\cdots Y_{r}^{m_r}] = 3-2n \E[Y_1^4]\,,
%\end{split}
%\end{equation*}
%where we used the identity $n(n-1) \E[ Y_{1}^{2} Y_{2}^{2}] = 1- n \E[ Y_{1}^{4}]$.
%By \cite{gine:goetze:mason:1997}, $\E[ ( Y_{1}+\cdots+Y_{n})^4 ]\to 3$, the fourth moment of the standard normal distribution, and hence $n\,\E[Y_{1}^4] = o(1)$.}

The expectations in \eqref{eq:assumptionq} can be calculated by using the following formula due to 
\cite{gine:goetze:mason:1997}:
\begin{equation}\label{eq:formulagine}
\E[ Y_{1}^{2m_1}\cdots Y_{r}^{2m_r} ] = \frac{1}{(k-1)!} \int_0^{\infty} 
\la^{k-1} (\E[\e^{-\la X^2}])^{n-r}  \prod_{j=1}^r \E[X^{2m_j}\e^{-\la X^2} ] \, \dint \la\,,
\end{equation}
where $1\le r\le k$, $m_1+\cdots+m_r=k$ and $m_i\ge 1$.
%This gives the explicit expression
%\begin{equation}\label{eq:fourthself}
%\E[Y_{1}^4]=  \int_0^{\infty} 
%\la (\E[\e^{-\la X^2}])^{n-1}  \E[X^{4}\e^{-\la X^2} ] \, \dint \la\,.
%\end{equation}
\par
We present some examples of distributions of $X$  which satisfy \eqref{eq:assumptionq}. %For any bounded random variable $|X|\le c$ we can choose $q_n=\log n$.

\begin{example}[Standard normal distribution]\label{ex:normal}{\em
Assume $X_i \sim N(0,1)$. We calculate $\E[Y_{1}^{2m_1}\cdots Y_{r}^{2m_r} ]$ for the standard normal distribution via \eqref{eq:formulagine}. Since $X_1^2$ has $\chi^2$-distribution we know for $\lambda\ge 0$ that $\E[\e^{-\la X^2}]=(1+2\la)^{-1/2}$.
We have 
\begin{equation*}
\frac{\dint^m}{\dint \la^m} \e^{-\la X^2} = (-X^2)^m \e^{-\la X^2}\,.
\end{equation*}
%Interchanging the order of differentiation and integration, which is justified for all $\lambda \ge 0$ since $\E[\e^{-\la X^2}]<\infty$, we obtain 
%\begin{equation}\label{eq:changelse}
%(-1)^{m} \E[X^{2m}\e^{-\la X^2} ]=  \E\Big[\frac{\dint^{m}}{\dint \la^{m}} \e^{-\la X^2}\Big]=\frac{\dint^{m}}{\dint \la^{m}} \E[\e^{-\la X^2}]\,.
%\end{equation}
Calculation yields 
\begin{equation}\label{eq:diffmgf}
\E[X^{2m}\e^{-\la X^2} ]
%\frac{\dint^m}{\dint \la^m} (1+2\la)^{-1/2} = \frac{2^m \Gamma(1/2)}{\Gamma(1/2-m)} (1+2\la)^{-1/2-m}%= \frac{(-2)^m (2m)!}{m!} (1+2\la)^{-1/2-m}
= (2m-1)!! (1+2\la)^{-1/2-m}\,.
\end{equation}
By \eqref{eq:formulagine} and \eqref{eq:diffmgf}, we have for $\ell\le k$
\begin{equation*}
\begin{split}
\E[Y_{1}^{2m_1}\cdots Y_{r}^{2m_r} ] &= \frac{1}{(\ell-1)!} \int_0^{\infty} 
\la^{\ell-1} (\E[\e^{-\la X^2}])^{n-r}  \prod_{j=1}^r \E[X^{2m_j}\e^{-\la X^2} ] \, \dint \la\\
%&= \frac{1}{(k-1)!} \int_0^{\infty} 
%\la^{k-1} (\E[\e^{-\la X^2}])^{n-r}  \prod_{j=1}^r (-1)^{m_j} \frac{\dint^{m_j}}{\dint \la^{m_j}} \E[\e^{-\la X^2}]  \, \dint \la\\
&= \frac{1}{(\ell-1)!} \int_0^{\infty} 
\la^{\ell-1} (1+2\la)^{-(n+2\ell)/2}  \, \dint \la \prod_{j=1}^r (2m_j-1)!!\,.
\end{split}
\end{equation*}
Since 
\begin{equation*}
\int_0^{\infty} 
\la^{\ell-1} (1+2\la)^{-(n+2\ell)/2}  \, \dint \la = \frac{\Gamma(n/2) \Gamma(\ell)}{2^{\ell} \Gamma(n/2+\ell)}\,,
\end{equation*}
one obtains
\begin{equation}\label{eq:momentsnormal}
\E[ Y_{1}^{2m_1}\cdots Y_{r}^{2m_r}] = \frac{\Gamma(n/2) }{2^\ell \Gamma(n/2+\ell)} \prod_{j=1}^r (2m_j-1)!!\,,
\end{equation}
which allows one to conclude that
\begin{equation*}
\begin{split}
\frac{\E[ Y_{1}^{2m_1}\cdots Y_{r}^{2m_r}]}{\E[ Y_{1}^{2m_1}\cdots Y_{r}^{(2m_r-2)}]}&=
\frac{2m_r-1}{n+2\ell-2}\le \frac{2k}{n}\,,
\end{split}
\end{equation*}
where we used $m_r\le \ell\le k$. Hence \eqref{eq:assumptionq} holds with $q_n=2k_n$.
}\end{example}

\begin{example}[Gamma distribution]\label{ex:gamma}{\em
 Assume $X^2\sim \text{Gamma}(\alpha,\beta)$, $\alpha,\beta>0$. In this case
\begin{equation*}
\frac{\dint^{m}}{\dint \la^{m}} \E[\e^{-\la X^2}]=\frac{\dint^{m}}{\dint \la^{m}} \Big(1+\frac{\la}{\beta}\Big)^{-\alpha} = \frac{\Gamma(1-\alpha )}{\Gamma(1-\alpha -m)} \beta^{-m} \Big(1+\frac{\la}{\beta}\Big)^{-\alpha-m}\,.
\end{equation*}
For $\ell\le k$ one can calculate
\begin{equation*}
\begin{split}
\E[ Y_{1}^{2m_1}\cdots Y_{r}^{2m_r}] 
&= \frac{1}{(\ell-1)!} \int_0^{\infty} 
\la^{\ell-1} (\E[\e^{-\la X^2}])^{n-r}  \prod_{j=1}^r (-1)^{m_j} \frac{\dint^{m_j}}{\dint \la^{m_j}} \E[\e^{-\la X^2}]  \, \dint \la\\
&= \frac{\Gamma(\alpha n)(-1)^{\ell}}{\Gamma(\alpha n +\ell)} \prod_{j=1}^r  \frac{\Gamma(1-\alpha )}{\Gamma(1-\alpha -m_j)}\,.
\end{split}
\end{equation*}
Similarly to the previous example, \eqref{eq:assumptionq} holds with $q_n=(k_n+\alpha)/\alpha$.
}\end{example}

%----------------------------------------------------------------------		
\section{Main results}\label{sec:3}\setcounter{equation}{0}
Our first result identifies the limit of the empirical spectral \ds\ 
$F_{\bfR}$ of the sample correlation matrix $\bfR$
for iid random fields $(X_{it})$ with generic element $X$.

\begin{theorem}[Limiting spectral distribution]\label{thm:mpcorrelation}
Assume the condition \eqref{Cpn}.
\begin{enumerate}
\item[(1)] If $X$ is centered and \eqref{eq:condX} holds
then $F_{\bfR}$ converges weakly to the \MP law given in \eqref{eq:MP}.
\item[(2)]
If $X$ is symmetric and  \eqref{eq:condX} does not hold, i.e.,
$\liminf_{\nto} n\,\E [ Y^4 ] >0$,
then 
\beao
\liminf_{\nto} \E\Big[\int x^k F_{\bfR}(dx)\Big]> \beta_k(\gamma)\,,\qquad k\ge 4\,,
\eeao
where $\beta_k(\gamma)$ is the $k$-th moment of the \MP law given in \eqref{eq:momentsmp}.
\end{enumerate}
\end{theorem}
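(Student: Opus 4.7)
The plan is to prove both parts by the method of moments combined with a combinatorial analysis of closed bipartite walks. For part~(1), the \MP law has compact support $[(1-\sqrt\gamma)^2,(1+\sqrt\gamma)^2]$ and is therefore determined by its moments, so by a standard subsequence argument it suffices to establish that for every integer $k\ge 1$,
\[
\frac{1}{p}\tr(\bfR^k)\;=\;\int x^k\,\dint F_{\bfR}(x)\;\stas\;\beta_k(\gamma)\,.
\]
First I would expand
\[
\tr(\bfR^k)=\sum_{i_1,\ldots,i_k=1}^p\sum_{t_1,\ldots,t_k=1}^n Y_{i_1t_1}Y_{i_2t_1}Y_{i_2t_2}\cdots Y_{i_kt_k}Y_{i_1t_k}
\]
as a sum over closed walks of length $2k$ on the bipartite graph with $p$ row-vertices and $n$ column-vertices, and group terms by the \emph{shape} of the walk: the isomorphism class of the spanned multigraph together with its traversal pattern. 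Because the rows of $\Y$ are iid, the expectation of each term factors over row-classes, and within a row-class one is left with a single-row expectation $\E[Y_{t_1}^{2m_1}\cdots Y_{t_r}^{2m_r}]$ accessible through the Gin\'e--G\"otze--Mason representation \eqref{eq:formulagine}.

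The dominant contribution should come from non-crossing tree shapes in which every edge is traversed exactly twice; such a shape uses $r$ row-vertices and $s=k+1-r$ column-vertices, and the number of non-crossing configurations is a Narayana number. For a fixed such shape the leading-order asymptotics yield $p(p-1)\cdots(p-r+1)\,n(n-1)\cdots(n-s+1)$ labelled realizations times a product of single-row factors, each of which equals $n^{-s_a}(1+o(1))$ by the normalization identity $\sum_t Y_{1t}^2=1$, exchangeability, and the decay $\E[Y_1^4]=o(n^{-1})$ from \eqref{eq:condX}. Summing the Narayana contributions weighted by $\gamma^{r-1}$ reproduces precisely $\beta_k(\gamma)$ as in \eqref{eq:momentsmp}. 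The remaining shapes — non-trees, trees with some edge of multiplicity $\ne 2$, or crossings — must contribute $o(1)$ in total. Here I would invoke the Path-Shortening Algorithm of Section~\ref{sec:5.3}, which peels off high-multiplicity edges and inductively reduces the problem to lower-order moment bounds. At the level needed for this theorem, the two inputs $\E[Y_1Y_2]=o(n^{-2})$ and $\E[Y_1^4]=o(n^{-1})$ of \eqref{eq:condX} suffice: the former handles shapes with odd-multiplicity edges (which, absent centering, do not automatically vanish), and the latter handles any edge of multiplicity $\ge 4$ and any closed cycle in the shape. To upgrade convergence in expectation to almost sure convergence I would run the same path-shape analysis on $\E[(\tr(\bfR^k))^2]$ and obtain $\Var(p^{-1}\tr(\bfR^k))=O(p^{-2})$, followed by Borel--Cantelli.

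For part~(2), symmetry of $X$ forces each $Y_{it}$ to be symmetric, so the Jonsson formula quoted in Section~\ref{sec:2} gives $\E[Y_1Y_2]=0$, and the failure of \eqref{eq:condX} amounts exactly to the stated hypothesis $\liminf_{\nto} n\,\E[Y_1^4]>0$. Under symmetry every nonvanishing term in the path expansion has each $Y_{it}$ to an even power, hence is nonnegative; this yields a valid lower bound upon summing over any chosen subfamily of shapes. I would retain (i) the non-crossing tree shapes responsible for the $\beta_k(\gamma)$ contribution in part~(1), and (ii) augmented tree shapes in which one column-vertex receives an additional double-traversal, so that a single edge is traversed four times rather than two. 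For $k\ge 4$ such augmentations are combinatorially available. Evaluating (ii) through \eqref{eq:formulagine} and using the exact identity $n\,\E[Y_1^4]+n(n-1)\,\E[Y_1^2Y_2^2]=1$ derived from $\sum_t Y_{1t}^2=1$, the augmented shapes produce a surplus of order $n\,\E[Y_1^4]$, which is strictly bounded away from zero by hypothesis. Therefore $\liminf_{\nto}\E[\int x^k\,\dint F_{\bfR}]>\beta_k(\gamma)$ for every $k\ge 4$.

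The principal obstacle lies in part~(1): enumerating the shapes of bipartite closed walks of length $2k$ and showing, shape by shape, that the combinatorial multiplicity is strictly dominated by the moment decay afforded by the self-normalization. In contrast to the sample covariance case, the factors $1/\sqrt{D_i}$ couple the row and column structures, and it is precisely the integral representation \eqref{eq:formulagine} that decouples these dependencies and allows the Path-Shortening Algorithm to operate effectively under just the weak hypothesis \eqref{eq:condX}.
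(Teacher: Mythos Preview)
Your approach to part~(2) is essentially the paper's: exploit symmetry so that every surviving term in the path expansion is nonnegative, keep the tree shapes that give $\beta_k(\gamma)$, and then exhibit a specific family of ``excess'' paths whose contribution is bounded below by a positive multiple of $(n\,\E[Y^4])^2$. The paper picks the concrete paths $I_r=(1,2,1,2,2,\ldots,2,3,\ldots,r)$, reduces them via the PSA to $S(I_r)=(1,2,1,2)$, and reads off $F(1,2,1,2)\ge n^{-1}(n\,\E[Y^4])^2$; your ``edge traversed four times'' shapes serve the same purpose.

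For part~(1), however, the paper does \emph{not} use the method of moments. It goes through the Stieltjes transform: it verifies the quadratic-form condition of Bai--Zhou's Theorem~1.1/2.2 (this is Lemma~\ref{lem:baicorrected}, which corrects a gap in \cite{bai:zhou:2008}) using precisely the two inputs $\E[Y_1Y_2]=o(n^{-2})$ and $\E[Y_1^4]=o(n^{-1})$ of \eqref{eq:condX}, identifies the limiting Stieltjes transform of $\Y'\Y$, and converts it to that of $\bfR=\Y\Y'$ via the rank relation. This is much shorter and sidesteps any combinatorics, at the price of importing Bai--Zhou's general machinery.

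Your moment route is in principle viable for fixed $k$, but there is a real gap in your sketch: the Path-Shortening Algorithm of Section~\ref{sec:5.3} is developed under the standing assumption that $X$ is \emph{symmetric}. Step~2 of the PSA (removing an index that appears exactly once) uses $\E[Y_{t}Y_{t'}]=n^{-1}\1_{\{t=t'\}}$, which fails when $X$ is merely centered; and Lemma~\ref{lem:9.5} and Proposition~\ref{prop:paths} likewise rely on symmetry to force $f(I,T)=0$ unless every $Y$-factor appears to an even power. Under part~(1) you only have centering, so you cannot invoke the PSA as written. You would need to redo the combinatorics allowing odd edge-multiplicities and control the resulting terms with the bound $\E[Y_1Y_2]=o(n^{-2})$; this is doable for each fixed $k$ (only finitely many shapes arise), but it is genuine additional work that your sketch does not supply. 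The paper avoids this entirely by taking the Stieltjes route.
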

The proof of parts (1) and (2) will be given in Sections~\ref{sec:5.1} and \ref{sec:5.5}, respectively.
\begin{remark}{\em 
Part (1) with condition \eqref{eq:condX} replaced by $\E[X^2]<\infty$ was proved in \cite{jiang:2004}. Later,
in \cite{bai:zhou:2008} the finite variance assumption was replaced by the weaker condition that
the distribution of $X$ belongs to the domain of attraction of the normal law. We discussed in the previous
section that   \eqref{eq:condX} holds under the latter condition.
Part (2) shows that \eqref{eq:condX} is the minimal condition for part (1) if $X$ is symmetric. By Lemma B.1 in \cite{bai:silverstein:2010},
$\lim_{\nto} \E\big[\int x^k F_{\bfR}(dx)\big]=\beta_k(\gamma)\,, k\ge 1\,,$
implies weak convergence of $F_{\bfR}$ to the \MP distribution as the latter is uniquely determined by its moments $(\beta_k(\gamma))_{k\ge 1}$.
}\end{remark}

If $X$ is symmetric, $n\,\E[Y^4]=o(1)$ and $p/n\to 0$, a slight modification of the proof of part (2) combined with the method of moments yields $F_{\bfR}\to \1_{[1,\infty)}$ weakly. Consequently, 
for any $\vep \in(0,1)$ the number of eigenvalues outside $(1-\vep,1+\vep)$ is $o(p)$ \as~ In particular, if $p$ is fixed, then $\mu_{(1)}$ and $\mu_{(p)}$ converge to $1$ \as~ In view of part (2), one concludes that $n\E[Y^4]=o(1)$ is a necessary and sufficient condition for the a.s. convergence of the eigenvalues $(\mu_{(i)})$ if $X$ is symmetric and $p$ fixed.  

When $p\to \infty$ one has to deal with the potentially $o(p)$ eigenvalues outside the support of the limiting spectral distribution. We develop a method to overcome this problem at the expense of strengthening the assumption $n\,\E[Y^4]=o(1)$ to \eqref{eq:assumptionq}.

A Borel--Cantelli argument to obtain an upper bound for $\limsup_n \mu_{(1)}$ requires an adequate bound on $\E[\mu_{(1)}^{k_n}]$, where $k_n\to \infty$. To this end, we use the inequality
\begin{equation*}
\E[\mu_{(1)}^{k_n}]\le \E[\tr \bfR^{k_n}] %=p \E\big[\int x^{k_n} F_{\bfR}(dx)\big]
= \sum_{i_1,\ldots,i_{k_n}=1}^p \sum_{t_1,\ldots,t_{k_n}=1}^n \E[ Y_{i_1t_{k_n}} Y_{i_1t_1} Y_{i_2t_1}Y_{i_2t_2} \cdots Y_{i_{k_n}t_{{k_n}-1}} Y_{i_{k_n}t_{k_n}}  ]
\end{equation*}
and determine those summands on the \rhs~which are largest when weighted by their multiplicities. Using our {\em Path-Shortening Algorithm}, which is a novel technique that efficiently uses the inherent structure of sample correlation matrices, their contribution is calculated explicitly. The other summands can --with considerable technical effort-- be controlled by \eqref{eq:assumptionq}. 
Note that because of the identity  $\E[\tr \bfR^{k_n}] =p\, \E\big[\int x^{k_n} F_{\bfR}(dx)\big]$ the behavior of moments of the empirical spectral distribution is closely linked to the above upper bound.

The following result provides general conditions for the a.s.~\con\ of 
the largest and smallest eigenvalues $\mu_{(1)}$ and $\mu_{(p)}$ of $\bfR$ to the endpoints of the 
\MP law. The proof of this result is given in Section~\ref{sec:5.2}.
\begin{theorem}[Limit of extreme eigenvalues]\label{thm:mu1convergence}
Assume \eqref{Cpn}. 
\begin{itemize}
\item[(1)] If $\E[X^4]<\infty$ and $\E[X]=0$ 
\item[(2)] or $X$ is symmetric and satisfies condition \eqref{eq:assumptionq},
\end{itemize}
then 
\begin{equation}\label{eq:mlmgd}
{\mu}_{(1)} \to (1+\sqrt{\gamma})^2 \, \quad \as\,,
\end{equation}
\begin{equation}\label{eq:limitmup}
{\mu}_{(p)} \to (1-\sqrt{\gamma})^2 \, \quad \as\phantom{\,,}
\end{equation}
\end{theorem}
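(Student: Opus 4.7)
The plan is to separate the two hypotheses in the theorem, since the technical burden is completely different. For case (1) I would simply recycle the strategy used in \cite{jiang:2004,xiao:zhou:2010}: by Lemma 2 in \cite{bai:yin:1993}, $\E[X^4]<\infty$ is equivalent to $\max_{i\le p}|n/D_i-1|\to 0\,\as$, while \eqref{eq:drtgdrghdr} gives $n^{-1}\la_{(1)}\to (1+\sqrt{\gamma})^2$ and $n^{-1}\la_{(p)}\to (1-\sqrt{\gamma})^2\,\as$. Plugging both into Weyl's inequality \eqref{eq:lamu} yields $\max_i|\mu_{(i)}-n^{-1}\la_{(i)}|\to 0\,\as$, and \eqref{eq:mlmgd}–\eqref{eq:limitmup} follow at once. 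All the work is in case~(2), where \eqref{eq:lamu} is useless because $\E[X^4]$ may be infinite.

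For case (2), symmetry of $X$ gives $\E[X]=0$ and the discussion in Section~\ref{sec:2} shows that $(C_q)$ implies \eqref{eq:condX}, so Theorem~\ref{thm:mpcorrelation}(1) is available. Since the MP density is strictly positive on a neighborhood of each endpoint of $[(1-\sqrt{\gamma})^2,(1+\sqrt{\gamma})^2]$, weak convergence of $F_{\bfR}$ to the MP law delivers the ``easy halves'' $\liminf_n \mu_{(1)}\ge (1+\sqrt{\gamma})^2$ and $\limsup_n\mu_{(p)}\le (1-\sqrt{\gamma})^2\,\as$ (the latter being trivial when $\gamma=1$). For the matching upper bound on $\mu_{(1)}$ I would run a Borel--Cantelli argument on
\begin{equation*}
\P\bigl(\mu_{(1)}>(1+\sqrt{\gamma})^2+\vep\bigr)\le \frac{\E[\tr\bfR^{k_n}]}{((1+\sqrt{\gamma})^2+\vep)^{k_n}}
\end{equation*}
with $k_n$ the sequence from condition $(C_q)$. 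Expanding $\E[\tr\bfR^{k_n}]$ as in the introduction produces a sum over cyclic $(i,t)$-paths of expectations $\E[Y_{i_1t_{k_n}}Y_{i_1t_1}\cdots Y_{i_{k_n}t_{k_n}}]$. The central step is to argue, via the Path-Shortening Algorithm of Section~\ref{sec:5.3}, that only paths whose combinatorics mimic those enumerating $\beta_{k_n}(\gamma)$ contribute at leading order, while every other class is shown by iterated application of $(C_q)$ to be of smaller order. This yields a bound $\E[\tr\bfR^{k_n}]\le p\,\beta_{k_n}(\gamma)(1+o(1))$, which, together with $\beta_{k}(\gamma)^{1/k}\to (1+\sqrt{\gamma})^2$ and $k_n/\log n\to\infty$, makes the above probability summable in $n$; Borel--Cantelli then gives $\limsup_n\mu_{(1)}\le (1+\sqrt{\gamma})^2\,\as$

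The main obstacle is the matching lower bound for $\mu_{(p)}$, since the moment method naturally controls only the top of the spectrum. My plan is to reduce it to a top-eigenvalue estimate by the standard shift trick: conditioning on the event $\{\mu_{(1)}\le M\}$, which has been established to occur almost surely for $M>(1+\sqrt{\gamma})^2$, the inequality $\mu_{(p)}<(1-\sqrt{\gamma})^2-\vep$ is equivalent to $\lambda_{\max}(M\bfI-\bfR)>M-(1-\sqrt{\gamma})^2+\vep$. I would then apply exactly the same Markov/trace scheme to $\bfB:=M\bfI-\bfR$, expanding $\bfB^{k_n}$ binomially and evaluating each $\E[\tr\bfR^j]$ for $j\le k_n$ by the same Path-Shortening machinery. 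The delicate part is to make the contributions over $j$ recombine into $\E_{f_\gamma}[(M-x)^{k_n}]$ up to lower-order terms, whose $k_n$-th root converges to $M-(1-\sqrt{\gamma})^2$ (the right endpoint of the support of $M-x$ under $f_\gamma$), so that the Markov bound becomes summable and Borel--Cantelli yields $\liminf_n\mu_{(p)}\ge (1-\sqrt{\gamma})^2\,\as$, completing the proof.
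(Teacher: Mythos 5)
Your case (1), the ``easy halves'' of case (2) via Theorem~\ref{thm:mpcorrelation}(1), and the upper bound $\limsup_n\mu_{(1)}\le(1+\sqrt{\gamma})^2$ via $\E[\tr\bfR^{k_n}]$, the Path-Shortening machinery and Borel--Cantelli all coincide with the paper's argument (Sections~\ref{sec:5.2} and \ref{sec:5.4}). The divergence is in the lower bound for $\mu_{(p)}$, and there your proposal has a genuine gap. The paper does not shift by a large constant $M$; it proves Proposition~\ref{prop:unified}, namely $\limsup_n\twonorm{\bfR-(1+\gamma)\bfI}\le 2\sqrt{\gamma}$ a.s., by bounding $\E[\tr(\bfR-(1+\gamma)\bfI)^{2k}]$. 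Two features of that choice are essential and are missing from your plan. First, the \emph{even} power makes $(M-\mu_{(p)})$-type quantities dominated by the trace unconditionally, so no conditioning on $\{\mu_{(1)}\le M\}$ is needed; your conditioning device is awkward, because the Markov/trace bound requires $\E[\tr\bfB^{k_n}\1_{\{\mu_{(1)}\le M\}}]$ while the path expansion computes $\E[\tr\bfB^{k_n}]$, and for odd $k_n$ the discrepancy on the complementary event is signed and not obviously negligible.

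Second, and more seriously, the step you label ``the delicate part'' is exactly where the entire content of the paper's argument lies, and it is not a routine recombination. In the binomial expansion of $\E[\tr(c\bfI-\bfR)^{k}]$ the individual terms are of order $(c+(1+\sqrt{\gamma})^2)^{k}$ while the target is $(c-(1-\sqrt{\gamma})^2)^{k}$, so the alternating sum exhibits exponential cancellation in $k$; since $k_n/\log n\to\infty$, a multiplicative error $(1+O(k^3q/n))$ on each $\E[\tr\bfR^j]$ cannot be propagated term by term through this sum. The paper resolves this by centering at the MP mean $1+\gamma$, for which the alternating sum $\sum_i\binom{2k}{i}(-(1+\gamma))^{-i}\beta_i(\gamma)$ admits the closed form of Lemmas~\ref{lem:fkgammaneww} and \ref{lem:fkgamma}: $f_k(\gamma)=1-\sum_{j=1}^k\frac{\gamma^{j-1}}{(1+\gamma)^{2j-1}}\frac{(2j-2)!}{j!(j-1)!}\le(4\gamma)^k/(1+\gamma)^{2k-1}$, whose $2k$-th root is $2\sqrt{\gamma}$, the half-width of the MP support; this single estimate yields both \eqref{eq:dndfnseg} and \eqref{eq:dndfnseg1} at once. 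For your centering at a generic $M$ no analogous identity is supplied, so the proposal is incomplete precisely at its critical step; to repair it you would either need to derive a closed form for $\E_{f_\gamma}[(M-x)^k]$ with controlled error, or adopt the paper's centering and Lemma~\ref{lem:fkgamma}.
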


\begin{remark}{\em Part (1) was proved in \cite{jiang:2004,xiao:zhou:2010}; see the discussion
in Section~\ref{sec:sdlgj}. 
Theorem~\ref{thm:mu1convergence} indicates that the a.s. convergence of the extreme eigenvalues of $\bfR$
does not depend on the finiteness of the fourth or even second moment; see also the discussion in section \ref{sec:cond}. 
This is in stark contrast to the a.s. behavior of $n^{-1}\lambda_{(1)}$, the largest eigenvalue of the sample covariance matrix 
$n^{-1}\bfX\bfX'$. Note that there is a phase transition of the a.s. \asy\ behavior of the  
extreme eigenvalues at the border between finite and infinite fourth moment of $X$, 
while such a transition occurs for the empirical spectral distribution at the border between finite and infinite variance.}
\end{remark}
\subsection{Simulation study}\label{sec:simulation}
\begin{figure}[htb!]
  \centering
  \subfigure[Sample correlation: $\mu_{(p)}=0.086, \mu_{(1)}=2.898$.]{
    \includegraphics[trim = 1.5in 3.4in 1.5in 3.6in, clip, scale=0.5]{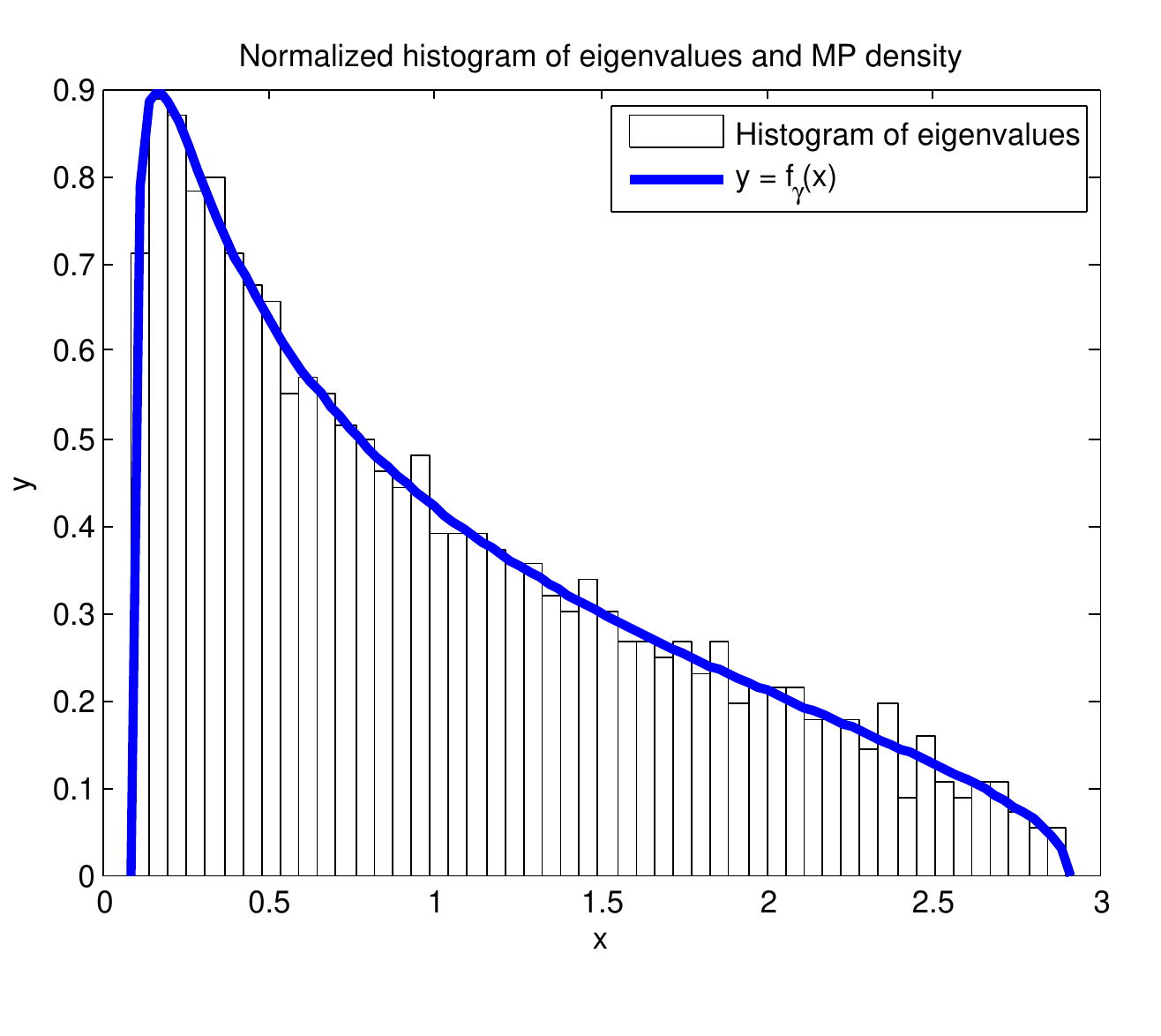}
  }
	\qquad
  \subfigure[Sample covariance: $\lambda_{(p)}/(n \E{[X^2]})= 0.085,\lambda_{(1)}/(n \E{[X^2]})=2.908$.]{
    \includegraphics[trim = 1.5in 3.4in 1.5in 3.6in, clip, scale=0.5]{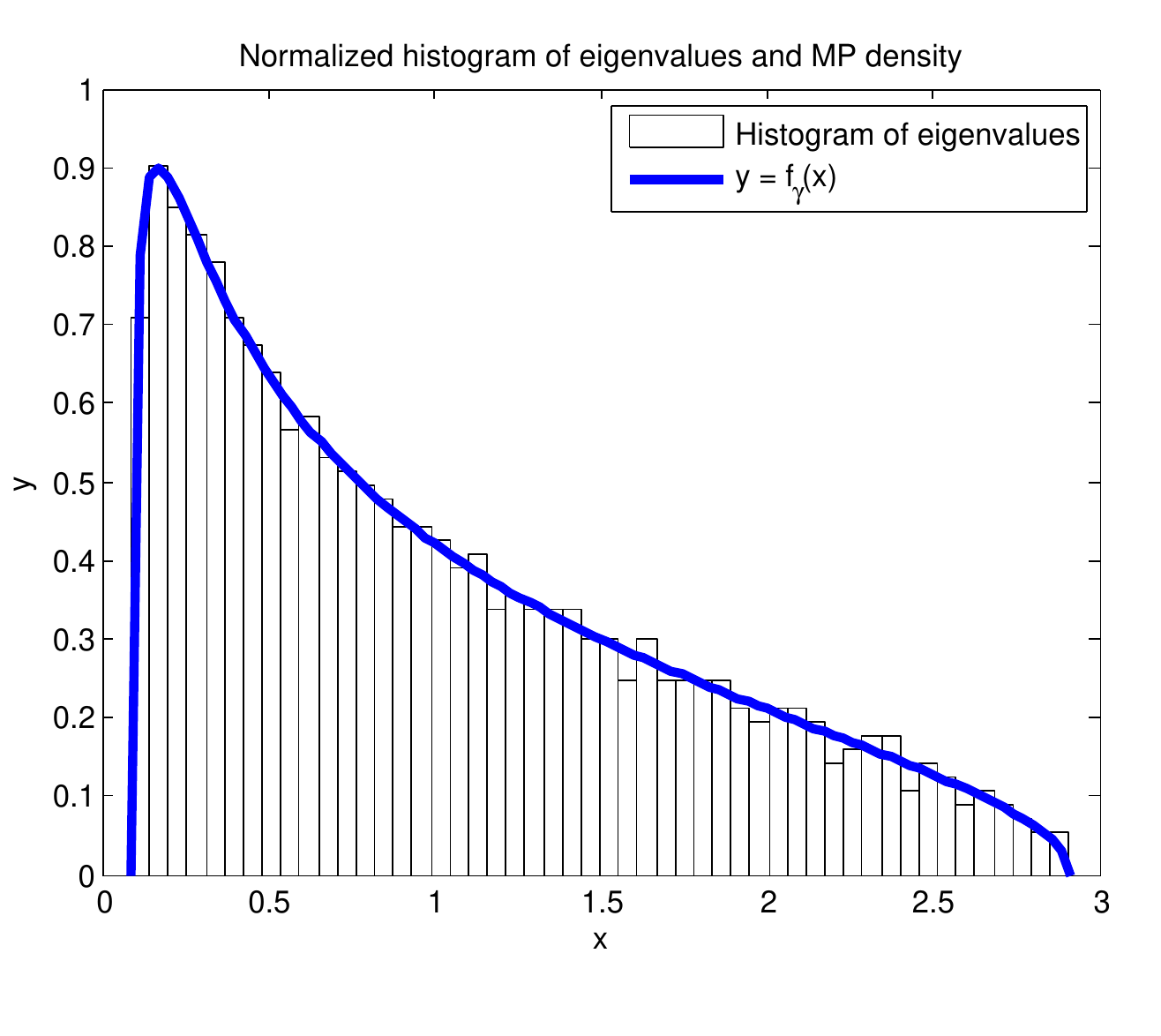}
  }
\caption{Histogram and \MP density for $X \sim t_6$, $n=2000, p=1000$.  $\gamma=0.5, (1-\sqrt{\gamma})^2=0.085, (1+\sqrt{\gamma})^2=2.914$.}
  \label{fig:alpha6}
\end{figure}

\begin{figure}[htb!]
  \centering
\subfigure[Sample correlation: $\mu_{(p)}=0.088, \mu_{(1)}=2.880$.]{
    \includegraphics[trim = 1.5in 3.4in 1.5in 3.6in, clip, scale=0.5]{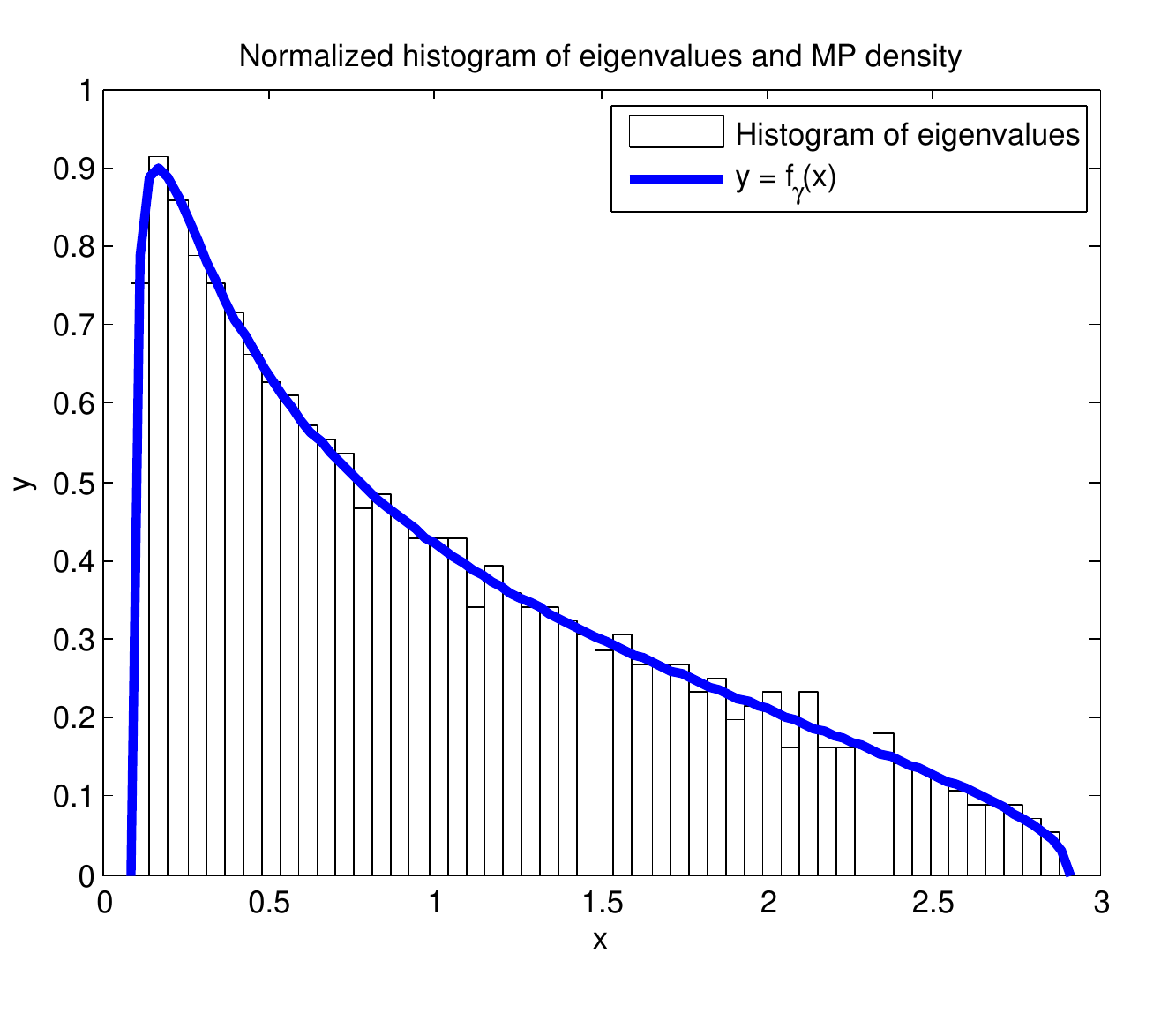}
		}
		\qquad
	\subfigure[Sample covariance: $\lambda_{(p)}/(n \E{[X^2]})= 0.083,\lambda_{(1)}/(n \E{[X^2]})=8.870$.]{
    \includegraphics[trim = 1.5in 3.4in 1.5in 3.6in, clip, scale=0.5]{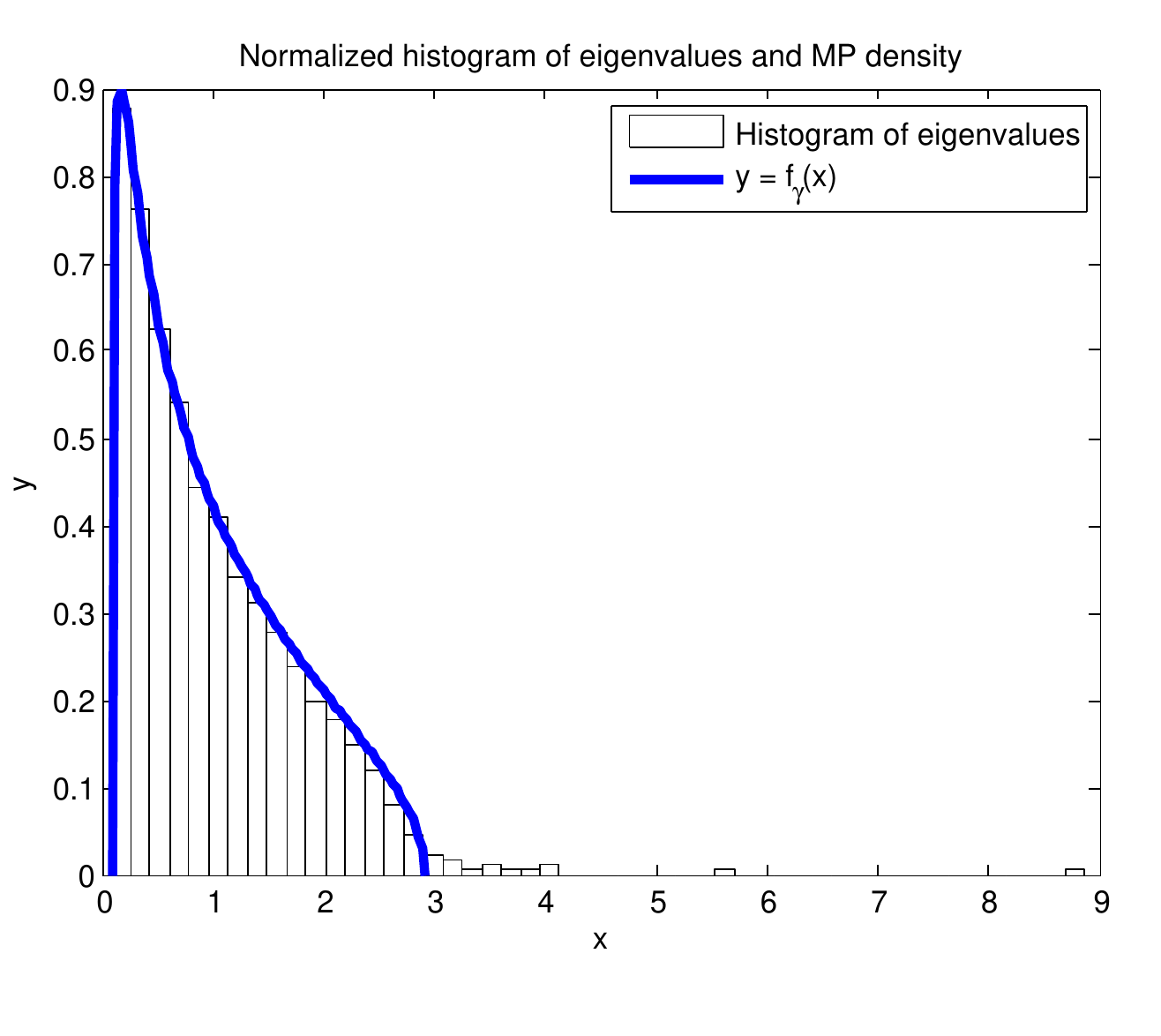}
  }
\caption{Histogram and \MP density for $X \sim t_3$, $n=2000, p=1000$. Here $\gamma=p/n=0.5, (1-\sqrt{\gamma})^2=0.085, (1+\sqrt{\gamma})^2=2.914$.}
  \label{fig:alpha3}
\end{figure}

\begin{figure}[htb!]
  \centering
\subfigure[Sample correlation: $\mu_{(p)}=0.086, \mu_{(1)}=2.902$.]{
    \includegraphics[trim = 1.5in 3.4in 1.5in 3.6in, clip, scale=0.5]{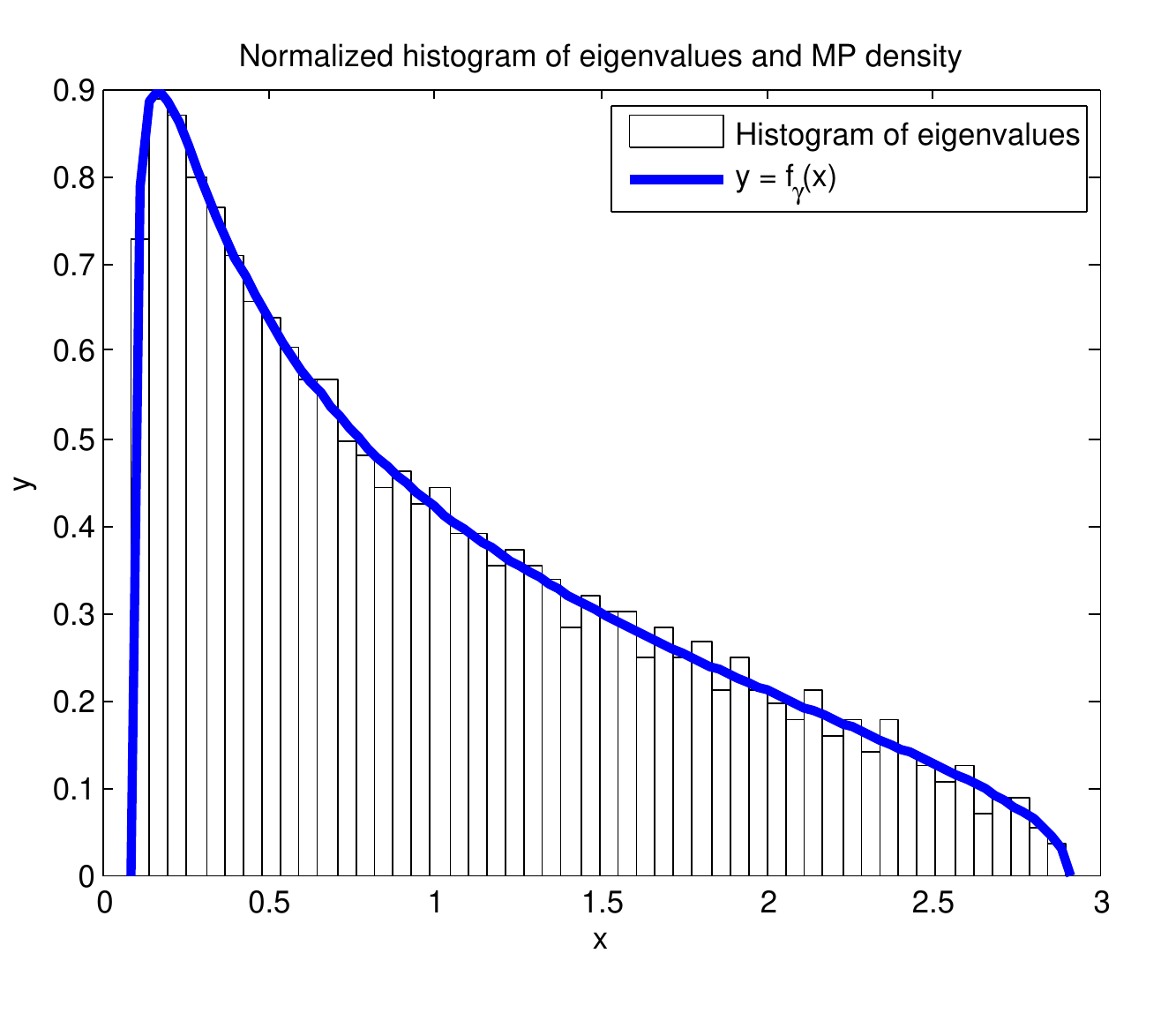}
  }
	\qquad
  \subfigure[Sample covariance: $\lambda_{(p)}/(n \E{[X^2]})=0.083, \lambda_{(1)}/(n \E{[X^2]})=3.176$.]{
    \includegraphics[trim = 1.5in 3.4in 1.5in 3.6in, clip, scale=0.5]{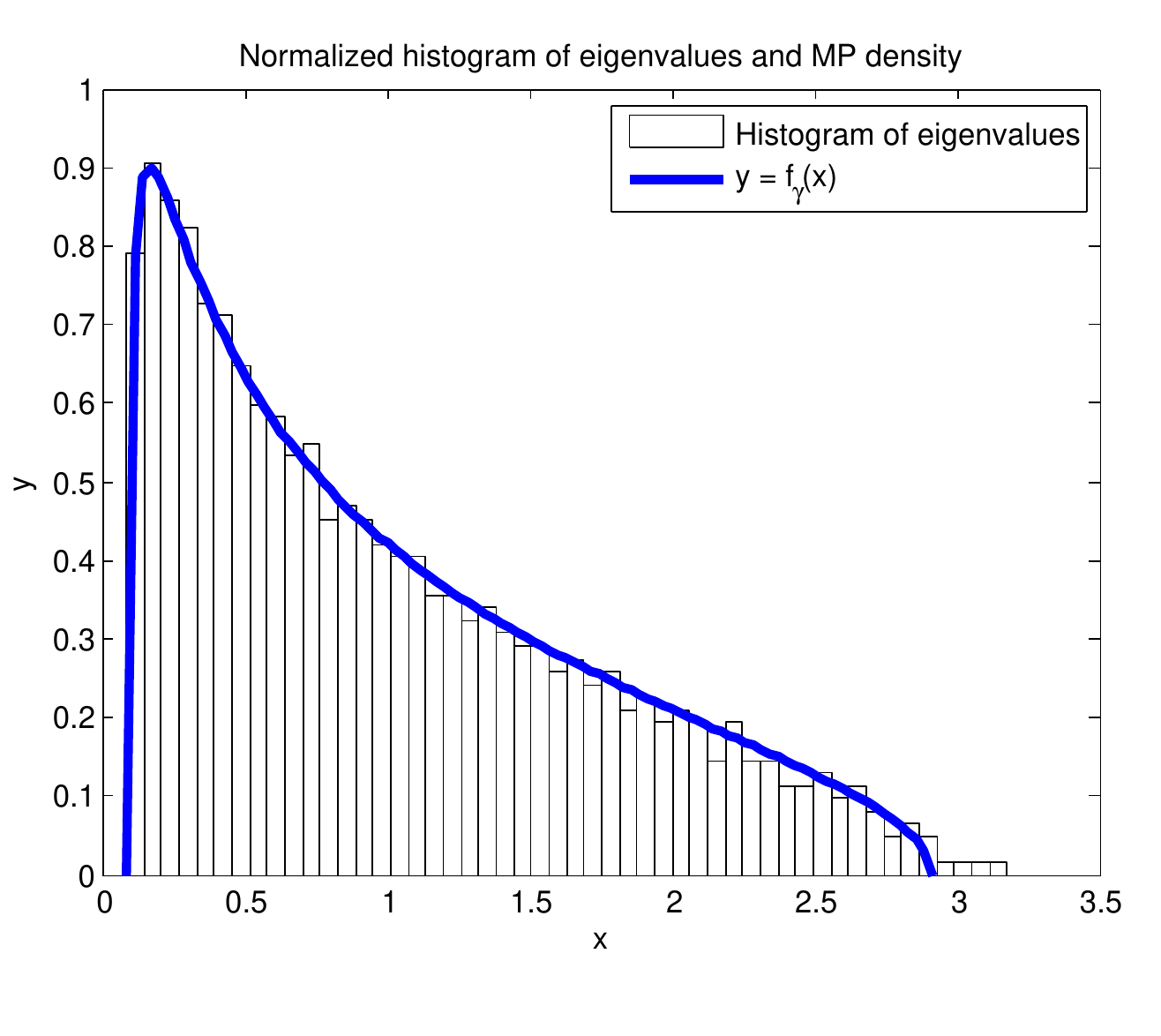}
  }
	\caption{Histogram and \MP density: $X \eid Z_1-Z_2$ for $Z_i\sim \text{Pareto}(3.99)$, $n=2000, p=1000$. Here $\gamma=p/n=0.5, (1-\sqrt{\gamma})^2=0.085, (1+\sqrt{\gamma})^2=2.914$.}
	\label{fig:pareto399}
	\subfigure[Sample correlation: $\mu_{(p)}=0.469, \mu_{(1)}=1.731$.]{
    \includegraphics[trim = 1.5in 3.4in 1.5in 3.6in, clip, scale=0.5]{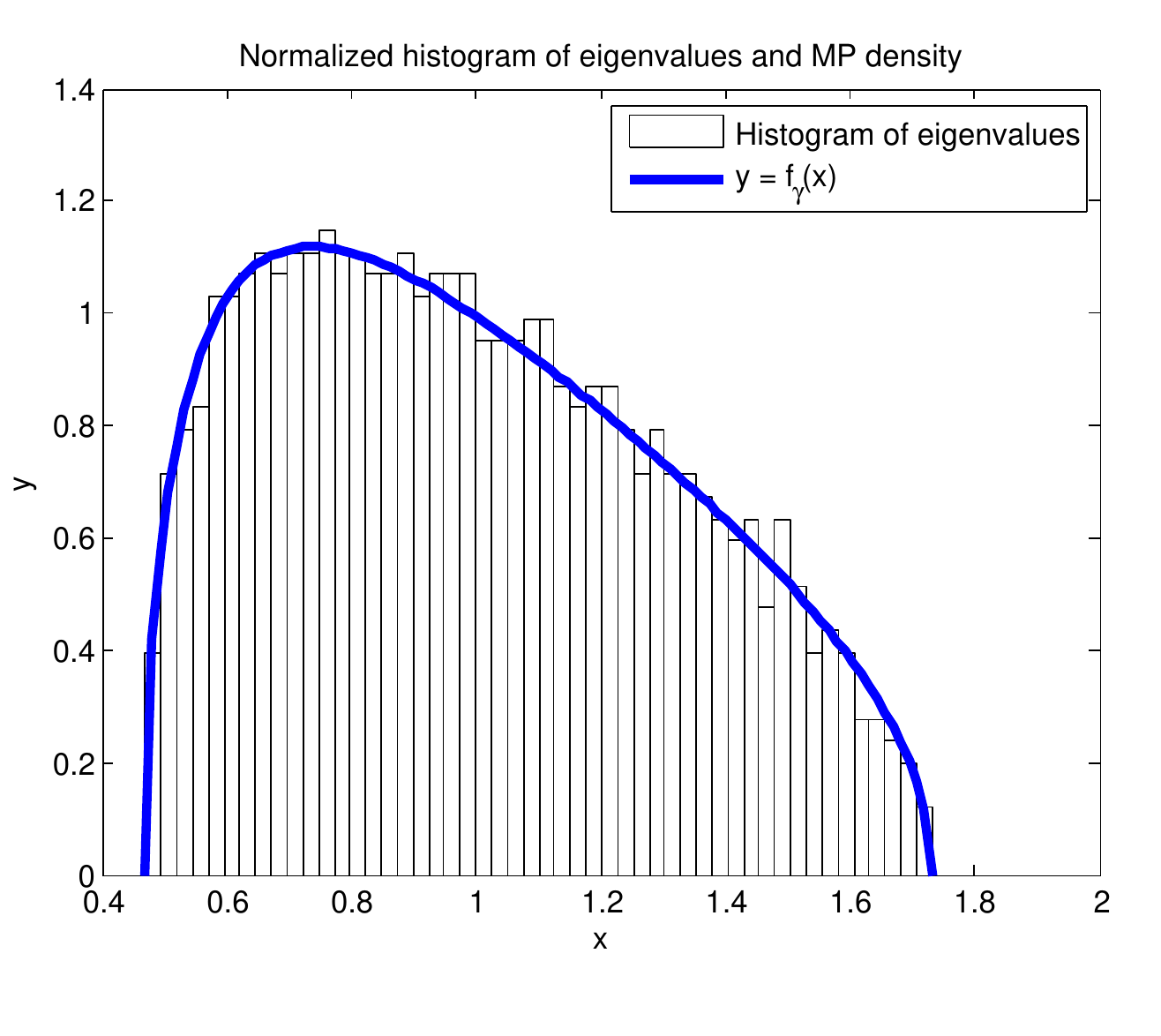}
  }
	\qquad
  \subfigure[Sample covariance: $\lambda_{(p)}/(n \E{[X^2]})=0.159, \lambda_{(1)}/(n \E{[X^2]})=35.319$.]{
    \includegraphics[trim = 1.5in 3.4in 1.5in 3.6in, clip, scale=0.5]{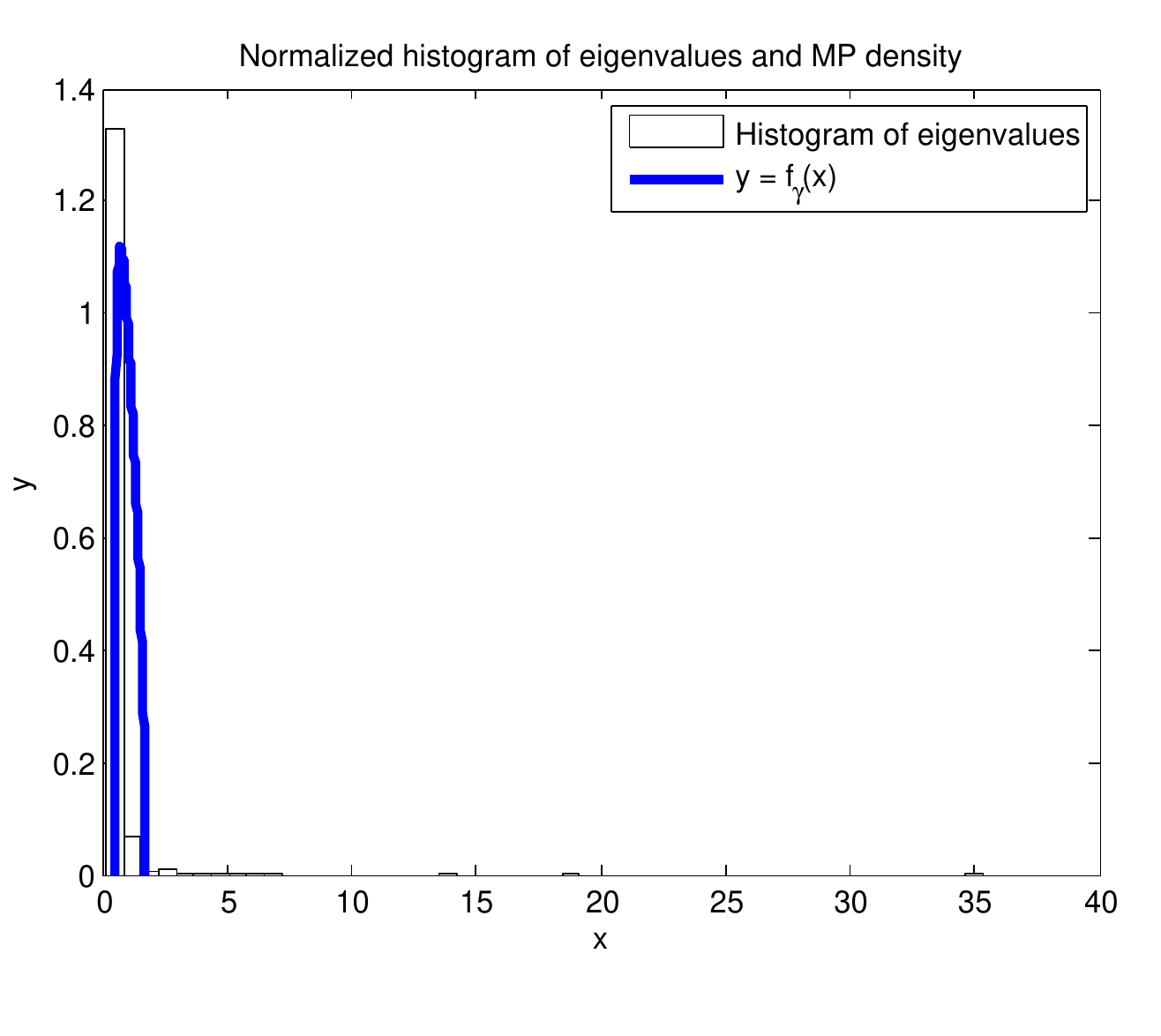}
  }
\caption{Histogram and \MP density for $X \sim t_{2.1}$, $n=10000, p=1000$. Here $\gamma=p/n=0.1, (1-\sqrt{\gamma})^2=0.467, (1+\sqrt{\gamma})^2=1.732$.}
  \label{fig:stability}
\end{figure}

\begin{figure}[htb!]
  \centering
\subfigure[$X \eid Z^2-\E Z^2$ for $Z\sim t_{1.5}$, $n=2000, p=1000, \gamma=0.5$.]{
    \includegraphics[trim = 1.5in 3.4in 1.5in 3.6in, clip, scale=0.5]{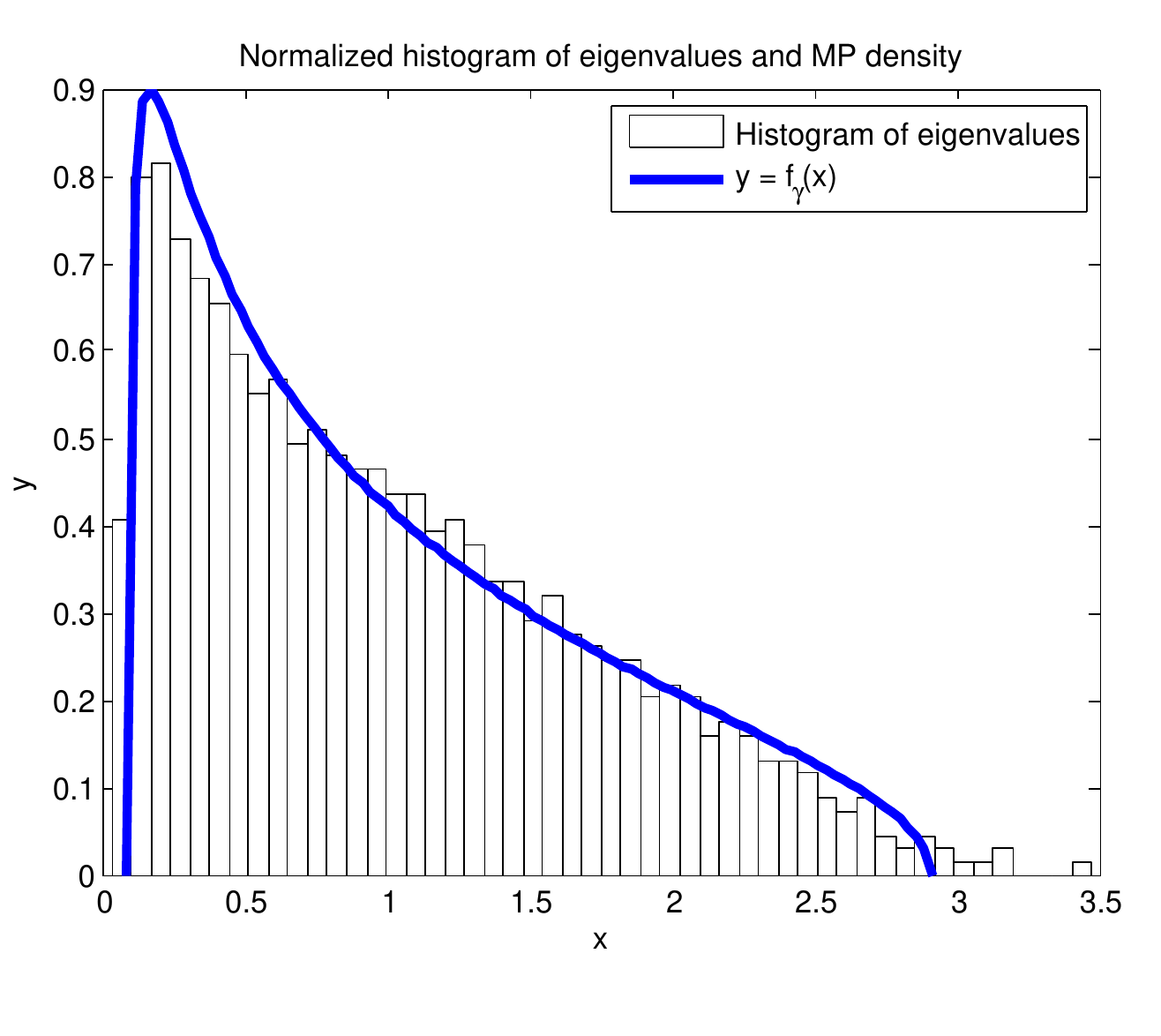}
  }
	\qquad
\subfigure[$X \sim t_{1.8}$, $n=10000, p=1000, \gamma=0.1$]{
    \includegraphics[trim = 1.5in 3.4in 1.5in 3.6in, clip, scale=0.5]{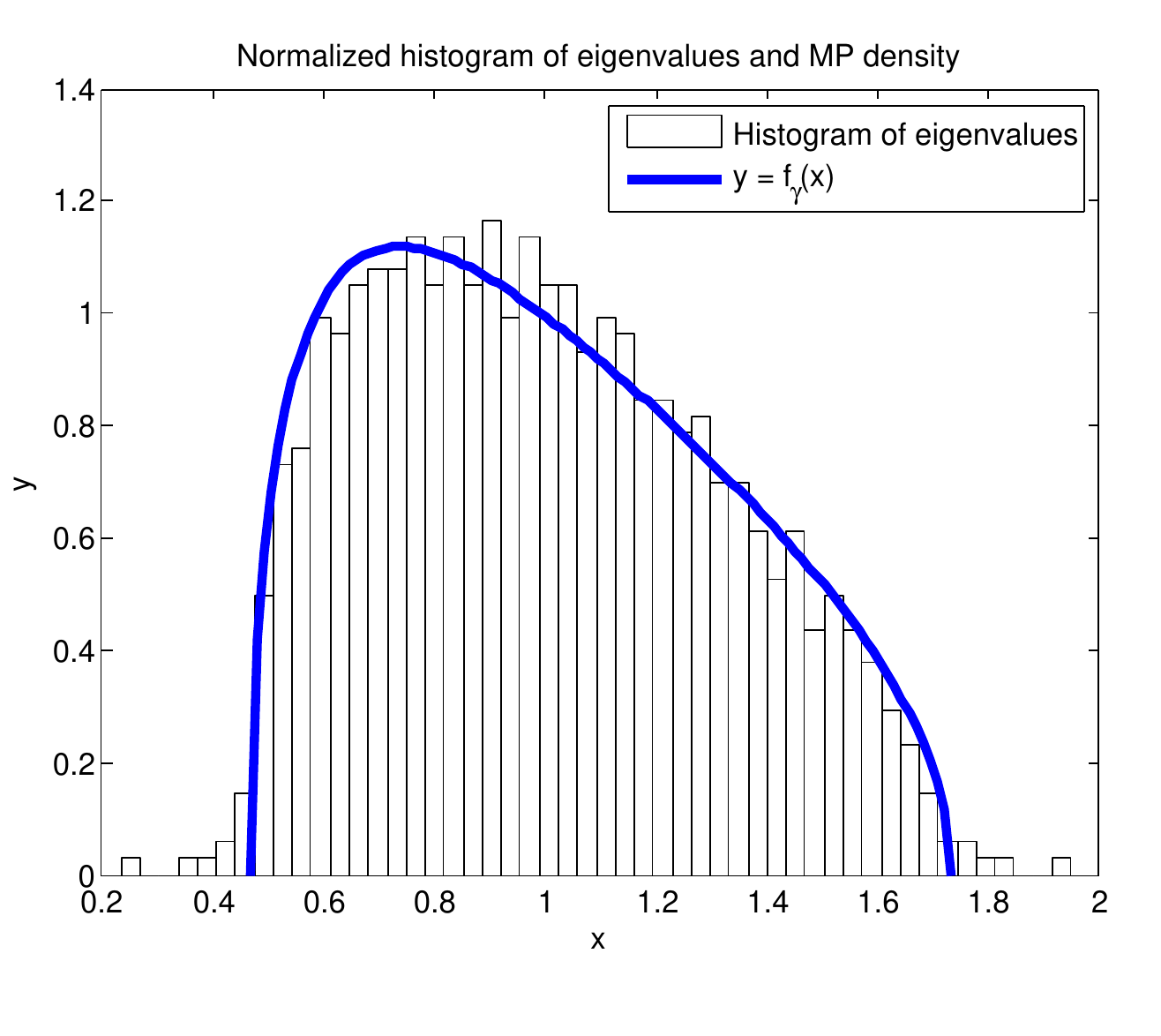}
  }
\caption{Histogram of $(\mu_{(i)})$ and \MP density}
  \label{fig:smallalpha}
\end{figure}

In this subsection we simulate a large data matrix $\X$ 
of iid entries. We compare the spectra of $\X\X'/n$ and $\bfR$ to the limiting \MP spectral density with appropriate 
parameter $\gamma$; see Theorem \ref{thm:mpcorrelation}. We simulate from different distributions of $X$ and choose various values for $p$ and $n$ to cover \MP distributions of several shapes. 
In what follows, we assume $\E[X^2]=1$, whenever the second moment is finite. 
\par

In Figure~\ref{fig:alpha6} we simulated a $1000\times 2000$ data matrix $\X$ with iid entries drawn from a $t_6$-distribution which we renormalized to meet the requirement $\E[X^2]=1$. To illustrate the weak convergence of $(F_{\bfR})$ and $(F_{\X\X'/n})$ we plot the histograms of the eigenvalues $(\mu_{(i)})$ and $(\lambda_{(i)}/n)$ and compare them to the \MP distribution with $\gamma=1/2$. %Note that the histograms of $(\mu_{(i)})$ and $(\lambda_{(i)}/n)$ are practically identical.
As expected in the case $\E[X^4]<\infty$, the values $n^{-1} \la_{(1)}=2.9086$ and $n^{-1} \la_{(p)}=0.0855$ are very close to their theoretical almost sure limits $2.9142$ and $0.0858$, respectively. The same is valid for $\mu_{(1)}$ and $\mu_{(p)}$.
\par

In Figure~\ref{fig:alpha3} we simulate $X$ from a renormalized $t_3$-distribution with unit variance. The histograms of $(\mu_{(i)})$ and $(\lambda_{(i)}/n)$ resemble the corresponding \MP density $f_{1/2}$. Note that $\lambda_{(1)}/n$ can be larger than the right endpoint $(1+\sqrt{\gamma})^2$ since it has a different limit behavior than in the case $\E[X^4]<\infty$, while $\mu_{(p)}$ and $\mu_{(1)}$ are close to the endpoints $(1-\sqrt{\gamma})^2$ and $(1+\sqrt{\gamma})^2$, respectively, for which Theorem~\ref{thm:mu1convergence} provides a formal justification.  
\par

In Figures~\ref{fig:pareto399} and \ref{fig:stability} we simulated from distributions with infinite fourth moment. 
We drew from a symmetrized Pareto distribution with parameter $3.99$ to create the plots in Figure~\ref{fig:pareto399}. 
Note that in this case $\E[X^{3.99}]=\infty$, while $\E[X^{3.99-\vep}]<\infty$ for any $\vep >0$, i.e., 
we are at the ``border'' between finite and infinite fourth moment. 
The extreme eigenvalues in the sample correlation case are very close to their theoretical limits stated in Theorem~\ref{thm:mu1convergence}, whereas 
the largest eigenvalues of the sample covariance matrix cease to lie within the support of the \MP distribution. 
Note that the assumption $\E[X^2]=1$ is superfluous for the sample correlation plots due to self-normalization. For the histogram of $(\lambda_{(i)}/n)$ the knowledge of the correct value $\E[X^2]$ is crucial since, for instance, $\lambda_{(1)}/n \to (1+\sqrt{\gamma})^2 \E[X^2]\, \as$ 
In applications, $\E[X^2]$ needs to be estimated first and estimation errors might significantly alter the conclusion. One can easily imagine that Figure~\ref{fig:alpha6}(b) with a misspecified variance of the data could have resembled Figure~\ref{fig:pareto399}(b). In this respect sample correlations are more robust. 
\par

In Figure~\ref{fig:stability}, we choose $X$ from the standardized
$t_{2.1}$-\ds , moving closer to the infinite variance case. The histogram of $(\mu_{(i)})$ fits the \MP density very well and the extreme eigenvalues are located in a close proximity of the endpoints of the \MP support. The sample covariance case in (b) does not 
look particularly appealing due to the fact that there are a few relatively large eigenvalues. For example, $\lambda_{(1)}/n=35.3196$ while $(1+\sqrt{\gamma})^2$ is only $1.7325$. By \cite{auffinger:arous:peche:2009,heiny:mikosch:2015:iid,davis:mikosch:heiny:xie:2015}, the properly normalized $\lambda_{(1)}$ converges to a \Frechet distributed random variable. The correct normalization is roughly $n^{4/2.1}$ and hence it is expected that $\lambda_{(1)}/n$ is separated from the bulk, whose behavior ultimately determines the limiting spectral distribution, which is the \MP law with parameter $\gamma=0.1$. However, due to the separation between the top eigenvalues and the bulk, it is not obvious from a histogram with only $50$ classes that the \MP law provides a good fit to the spectral distribution in (b). This different behavior of sample correlations and covariances is an additional argument for the higher stability of results obtained from an analysis of the sample correlation matrix.  
\par

Finally, we turn to the case where the assumptions of Theorem~\ref{thm:mpcorrelation} are violated. We present two histograms of $(\mu_{(i)})$ with $\E[X^2]=\infty$ in Figure~\ref{fig:smallalpha}. 
In (a), we choose the non-symmetric $X \eid Z^2-\E Z^2$ for $Z\sim t_{1.5}$. 
In (b), the simulated $X$ is standardized $t_{1.8}$. 
The plots look surprisingly stable, given that the empirical spectral distribution 
does not weakly converge to the \MP law; see Theorem~\ref{thm:mpcorrelation}(2). 
The extreme eigenvalues $\mu_{(1)}$ and $\mu_{(p)}$ are much further away from $(1-\sqrt{p/n})^2$ and $(1+\sqrt{p/n})^2$, respectively, than in all the other sample correlation histograms we have seen so far.

\subsection{A version of condition \eqref{eq:assumptionq} for finite fourth moment and beyond}

In applications it is quite challenging to check condition \eqref{eq:assumptionq} for a given distribution.  In principle, \eqref{eq:assumptionq} can be verified by direct calculation using \eqref{eq:formulagine}. However, for some classical distributions, such as the $t$- and symmetrized Pareto distributions, \eqref{eq:formulagine} requires the calculation of a high-dimensional integral which might not always be as easy to compute as in Examples \ref{ex:normal} and \ref{ex:gamma}. 
Due to the complexity of the calculations, this paper does not contain a nicely worked out example of a distribution with infinite fourth moment that satisfies  \eqref{eq:assumptionq}.

In the existing literature the almost sure convergence of the extreme eigenvalues was established for finite fourth moment only. 
Section \ref{sec:simulation} indicates that the result holds for infinite fourth moment too. A necessary and sufficient condition, however,  is unknown. Our condition \eqref{eq:assumptionq} is a step towards the optimal condition which we believe to be $n \E[Y^4]\to 0$ for symmetric distributions. From our constructive method of proof one can see how $\E[Y^4]$ comes into play; see also Theorem \ref{thm:mpcorrelation} and especially the proof of part (2) for a better understanding how the behavior of $\E[Y^4]$ influences the limiting spectral distribution of $\bfR$.
\medskip

%One important contribution of this paper is the novel approach used in the proof  of Theorem \ref{thm:mu1convergence}. 
In this subsection we show that a version of condition \eqref{eq:assumptionq} holds for essentially all distributions with finite fourth moment. 
We also provide an explicit example of a distribution with infinite fourth moment such that $\mu_{(1)}\cip (1+\sqrt{\gamma})^2$ and $\mu_{(p)}\cip (1-\sqrt{\gamma})^2$. 

For a slowly varying sequence $\delta_n\to 0$ define $\widehat{X}_{it}^{(n)}=X_{it} \1_{\{|X_{it}|\le \delta_n \sqrt{n}\}}$ and set
\begin{equation}\label{eq:truncated}
\widehat{\X}_n= \big(\widehat{X}_{it}^{(n)}\big)_{i=1,\ldots,p;t=1,\ldots,n}\,.
\end{equation}
The sample correlation matrix of the truncated data $\widehat{\bfR}=\widehat{\Y}\widehat{\Y}'$ with $\widehat{Y_{it}}=\widehat{X}^{(n)}_{it}/\sqrt{\sum_{t=1}^n (\widehat{X}_{jt}^{(n)})^2}$ and 
\begin{equation}\label{eq:corrRhat}
\widehat{\bfR}= \big(\widehat{R}_{ij}\big) = \Big(\sum_{t=1}^n \frac{\widehat{X}_{it}^{(n)}\widehat{X}_{jt}^{(n)}}{\sqrt{\sum_{t=1}^n (\widehat{X}_{it}^{(n)})^2} \sqrt{\sum_{t=1}^n (\widehat{X}_{jt}^{(n)})^2}} \Big)\,, \quad i,j=1,\ldots,p\,,
\end{equation}
has largest and smallest eigenvalues $\widehat{\mu}_{(1)}$ and $\widehat{\mu}_{(p)}$, respectively. If $\E[X^{4}] <\infty$, then by Lemma 2.2 in \cite{bai:yin:krishnaiah:1988}, $\P(\X_n  \neq \widehat{\X}_n \text{ i.o.})=0$, which implies
\begin{equation*}
\max_{1\le i\le p} \{ |\mu_{(i)}-\widehat{\mu}_{(i)}|\} \cas 0\,.
\end{equation*}

Next, we introduce a condition similar to \eqref{eq:assumptionq}.
For ease of notation we will again write $(\widehat{Y}_1, \ldots, \widehat{Y}_n)= (\widehat{Y}_{11}, \ldots, \widehat{Y}_{1n})$ and $\widehat{D}=\widehat{D}_1=\sum_{t=1}^n (\widehat{X}_{1t}^{(n)})^2$.

\begin{itemize}
\item[\phantom{.}] {\bf{Condition}} $(\widehat{C}_q)$: There exists a slowly varying sequence $\delta_n\to 0$ such that 
\begin{equation}\label{eq:gsdglsrk}
\lim_{\nto} n^2\, \P(|X|>\delta_n \sqrt{n}) =0\,
\end{equation}
and there exists a sequence $q=q_n\to \infty$ such that for some integer sequence $k=k_n$ with $k/\log n \to \infty$ we have $(k^3 q)/n \to 0$ and the moment inequality
\begin{equation}\label{eq:assumptionqhat}
\E[ \widehat{Y}_{1}^{2m_1}\cdots \widehat{Y}_{r}^{2m_r} ] \le 
 \frac{q_n}{n} \, \E[ \widehat{Y}_{1}^{2m_1}\cdots \widehat{Y}_{r-1}^{2m_{(r-1)}}\widehat{Y}_{r}^{(2m_r-2)} ]\, %\tag{$\widehat{C}_q$}
\end{equation}
holds for $1\le r\le \ell-1$ and any positive integers $m_1,\ldots,m_r$ satisfying $m_1+\cdots +m_r=\ell$, where $\ell \le k$.
\end{itemize}

\begin{remark}{\em
Loosely speaking, condition \eqref{eq:assumptionq} fails if the probability of one $Y_i^2$ being much larger than the sum of the other $Y_j^2$ is ``not small enough''. In some cases it might be easier to check condition \eqref{eq:assumptionqhat} instead of \eqref{eq:assumptionq} since one is allowed to include an extra truncation step which alleviates the aforementioned issue at the cost of the additional restriction \eqref{eq:gsdglsrk}.
}\end{remark}

We have the following version of Theorem \eqref{thm:mu1convergence} part (2).
\begin{theorem}\label{thm:mu1convergence00}
Assume \eqref{Cpn}. 
\begin{itemize}
\item[(2')] If $X$ is symmetric and satisfies condition $(\widehat{C}_q)$, then
\end{itemize}
\begin{equation}\label{eq:dsgfses}
\widehat{\mu}_{(1)} \cas (1+\sqrt{\gamma})^2 \quad \text{ and } \quad \widehat{\mu}_{(p)} \cas (1-\sqrt{\gamma})^2\,.
\end{equation}
\end{theorem}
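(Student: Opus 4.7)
The plan is to treat Theorem~\ref{thm:mu1convergence00} as the verbatim analogue of Theorem~\ref{thm:mu1convergence}(2) for the truncated triangular array $(\widehat X_{it}^{(n)})$ in place of the iid sequence $(X_{it})$. For each fixed $n$ the truncated entries are iid and symmetric (symmetric truncation of a symmetric variable remains symmetric), and condition \eqref{eq:assumptionqhat} is precisely condition \eqref{eq:assumptionq} for the corresponding self-normalized variables $\widehat Y_{it}=\widehat X_{it}^{(n)}/\sqrt{\widehat D_i}$. My strategy is therefore to rerun the proof of Theorem~\ref{thm:mu1convergence}(2), tracking that every moment estimate depends only on symmetry together with the abstract moment bound in $(C_q)$, so that the substitution $Y\mapsto \widehat Y$ is legitimate even though the common law of the entries now depends on $n$.

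For the upper bound $\limsup_n\widehat\mu_{(1)}\le (1+\sqrt{\gamma})^2$ almost surely I would apply Markov's inequality to the trace moment
\begin{equation*}
\E[\widehat\mu_{(1)}^{k_n}]\le \E[\tr\widehat\bfR^{k_n}]=\sum_{i_1,\ldots,i_{k_n}=1}^{p}\sum_{t_1,\ldots,t_{k_n}=1}^{n}\E[\widehat Y_{i_1t_{k_n}}\widehat Y_{i_1t_1}\widehat Y_{i_2t_1}\cdots \widehat Y_{i_{k_n}t_{k_n}}],
\end{equation*}
and use the Path-Shortening Algorithm of Section~\ref{sec:5.3} to separate the Catalan-type closed walks producing the \MP main term $\beta_{k_n}(p/n)$ from the remaining walks, the latter being controlled by the gap factor $q_n/n$ supplied by \eqref{eq:assumptionqhat}. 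The assumptions $k_n/\log n\to\infty$ and $k_n^3 q_n/n\to 0$ built into $(\widehat C_q)$ yield a summable Borel--Cantelli tail bound. The matching one-sided bound $\liminf_n\widehat\mu_{(1)}\ge (1+\sqrt{\gamma})^2$, as well as the upper bound $\limsup_n\widehat\mu_{(p)}\le (1-\sqrt{\gamma})^2$, will follow from weak convergence of $F_{\widehat\bfR}$ to the \MP law via the standard one-sided sandwich \eqref{eq:efsdf}.

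To apply Theorem~\ref{thm:mpcorrelation}(1) to the triangular array $(\widehat X_{it}^{(n)})$ I need to verify condition \eqref{eq:condX}: symmetry forces $\E[\widehat Y_1\widehat Y_2]=0$, while iterating \eqref{eq:assumptionqhat} with $r=1$ and $m_1=2$ gives $\E[\widehat Y_1^4]\le q_n\E[\widehat Y_1^2]/n = q_n/n^2=o(n^{-1})$ since $q_n=o(n/(\log n)^3)$. The tail truncation condition \eqref{eq:gsdglsrk} is exactly what permits the proof of Theorem~\ref{thm:mpcorrelation}(1) to carry over to the triangular-array setting by making truncation errors negligible. For the deeper bound $\liminf_n\widehat\mu_{(p)}\ge (1-\sqrt{\gamma})^2$ when $\gamma<1$, I would appeal to the parallel trace-moment argument from the proof of Theorem~\ref{thm:mu1convergence}(2), which controls deviations of $\widehat\mu_{(p)}$ from the left endpoint of the \MP support by the same Path-Shortening combinatorics together with \eqref{eq:assumptionqhat}; for $\gamma=1$ this bound is automatic.

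The main obstacle is bookkeeping rather than probabilistic content: I must verify that every step in the Section~\ref{sec:5.2}--\ref{sec:5.3} chain depends only on symmetry and on the abstract bound $(C_q)$, so that the triangular-array nature of $(\widehat X_{it}^{(n)})$ introduces no hidden dependence on a fixed common law. Since the trace expansion, Path-Shortening combinatorics, and Catalan matching are purely formal once \eqref{eq:assumptionqhat} is in hand, I expect this reduction to proceed with only notational changes.
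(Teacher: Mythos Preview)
Your proposal is correct and follows the same approach as the paper's one-line proof: the truncated entries $\widehat X_{it}$ are symmetric and satisfy \eqref{eq:assumptionq}, so the entire machinery behind Theorem~\ref{thm:mu1convergence}(2) (Proposition~\ref{thm:limsup} together with Theorem~\ref{thm:mpcorrelation}(1)) applies verbatim to $\widehat\bfR$. One minor remark: the tail condition \eqref{eq:gsdglsrk} in $(\widehat C_q)$ is not actually used at this stage---it is needed only afterwards to compare $\widehat\mu_{(i)}$ with $\mu_{(i)}$---so your appeal to it for carrying over the \MP limit is unnecessary, though harmless.
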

\begin{proof}
The truncated entries $\widehat{X}_{it}$ satisfy condition \eqref{eq:assumptionq} and are symmetric. Following the proof of Theorem~\ref{thm:mu1convergence} under condition \eqref{eq:assumptionq} we get \eqref{eq:dsgfses}.
\end{proof}

Equation \eqref{eq:gsdglsrk} in condition $(\widehat{C}_q)$ is designed for comparison of the eigenvalues of $\bfR$ and $\widehat{\bfR}$. Under \eqref{eq:gsdglsrk}, one has $\P(\X_n  \neq \widehat{\X}_n)\le c\, n^2\, \P(|X|>\delta_n \sqrt{n}) \to 0$, which implies
\begin{equation}\label{eq:gsejkbfsejkfn}
\max_{1\le i\le p} \{ |\mu_{(i)}-\widehat{\mu}_{(i)}| \} \cip 0\,.
\end{equation}

Next we present a rather crude way to check $(\widehat{C}_q)$ for distributions with finite $(4+\vep)$ moment. % that are bounded away from zero.
\begin{corollary}\label{cor:trunc}
%Under the conditions of Theorem~\ref{thm:mu1convergence}, 
Let $\vep>0$. If $\E[X]=0$, $|X|\ge c>0$ and $\E[|X|^{4+\vep}]<\infty$, then condition $(\widehat{C}_q)$ is valid and $\delta_n>0$ can be chosen as any slowly varying sequence tending to $0$.
\end{corollary}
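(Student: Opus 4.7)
The plan is to verify the two parts of condition $(\widehat{C}_q)$ in turn. The tail bound \eqref{eq:gsdglsrk} is immediate: by Markov's inequality and $\E[|X|^{4+\vep}]<\infty$,
\[
n^2\,\P(|X|>\delta_n\sqrt{n})\le \frac{\E[|X|^{4+\vep}]}{\delta_n^{4+\vep}\,n^{\vep/2}},
\]
which tends to $0$ for any slowly varying $\delta_n\to 0$, since $\delta_n^{4+\vep}$ is slowly varying and the power $n^{\vep/2}$ dominates it.

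The heart of the argument is the moment inequality \eqref{eq:assumptionqhat}. The hypothesis $|X|\ge c>0$ is used to produce a high-probability deterministic lower bound on $\widehat{D}$. Writing $N_n=\#\{t\le n:|X_{1,t}|\le \delta_n\sqrt{n}\}$, the truncation together with $|X|\ge c$ gives $\widehat{D}\ge c^{2}N_n$, and Hoeffding's inequality applied to the Bernoulli indicators (together with $\P(|X|>\delta_n\sqrt{n})\to 0$) yields $\P(N_n<n/2)\le \e^{-\kappa n}$ for some $\kappa>0$ and all $n$ large. On the good event $A_n=\{N_n\ge n/2\}$ we thus have $\widehat{D}\ge c^{2}n/2$, while $\widehat{X}_{1,r}^{2}\le \delta_n^{2}n$ by truncation; hence $\widehat{Y}_{r}^{2}\le 2\delta_n^{2}/c^{2}$ on $A_n$. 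Splitting the expectation and using the trivial bound $\widehat{Y}_{t}^{2}\le 1$ on $A_n^c$, one obtains
\[
\E[\widehat{Y}_{1}^{2m_1}\cdots \widehat{Y}_{r}^{2m_r}]\le \frac{2\delta_n^{2}}{c^{2}}\,\E[\widehat{Y}_{1}^{2m_1}\cdots \widehat{Y}_{r-1}^{2m_{r-1}}\widehat{Y}_{r}^{2(m_r-1)}] + \e^{-\kappa n}.
\]

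To absorb the additive $\e^{-\kappa n}$ into the main term, I would establish a polynomial lower bound for the denominator: using $|X|\ge c$ together with the LLN upper bound $\widehat{D}\le 2n\,\E[X^2]$ on a high-probability event, and restricting to the event that the $r$ designated entries are not truncated (probability $\to 1$), each $\widehat{Y}_{t}^{2}\ge c^{2}/(2n\,\E[X^{2}])$, so $\E[\widehat{Y}_{1}^{2m_1}\cdots \widehat{Y}_{r}^{2(m_r-1)}]$ is at least a constant times $n^{-(\ell-1)}$. Since $\ell\le k_n$ and we will choose $k_n\log n=o(n)$, the error $\e^{-\kappa n}$ is negligible compared with this polynomial lower bound, so \eqref{eq:assumptionqhat} holds with, for instance, $q_n=3n\delta_n^{2}/c^{2}$ for all $n$ sufficiently large.

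Finally, the constraints $k_n/\log n\to\infty$ and $(k_n^{3}q_n)/n=3k_n^{3}\delta_n^{2}/c^{2}\to 0$ can be met simultaneously by selecting $k_n$ in the range $\log n\ll k_n\ll \delta_n^{-2/3}$, which is achievable by picking any slowly varying $\delta_n$ decaying sufficiently fast, e.g.\ $\delta_n=\e^{-\sqrt{\log n}}$. The main obstacle is the control of $1/\widehat{D}$: the hypothesis $|X|\ge c$ is precisely what converts a probabilistic concentration statement into a clean deterministic lower bound, reducing the per-step ratio in \eqref{eq:assumptionqhat} to the scalar $\delta_n^{2}$ rather than requiring a more delicate estimate of $\E[1/\widehat{D}^{m}]$ for varying $m$.
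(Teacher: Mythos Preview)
Your argument is correct and tracks the paper's strategy: bound $\widehat X_r^{2}\le \delta_n^{2}n$ and combine with the lower bound on $\widehat D$ furnished by $|X|\ge c$ to obtain $\widehat Y_r^{2}\lesssim \delta_n^{2}$, so that $q_n$ is of order $n\delta_n^{2}$. The paper's write-up is terser and in fact somewhat glossed---it effectively treats the bound $\widehat D\ge nc^{2}$ as if it were deterministic, ignoring that some entries may be truncated to zero---whereas you handle this honestly by splitting off the event $\{N_n<n/2\}$ and absorbing the resulting $\e^{-\kappa n}$ error via a polynomial lower bound on the target moment. One minor point: the ``constant'' in your lower bound $\E[\widehat Y_1^{2m_1}\cdots \widehat Y_r^{2(m_r-1)}]\ge \text{const}\cdot n^{-(\ell-1)}$ is really of the form $C^{\ell-1}$ and hence depends on $\ell\le k_n$; but since you take $k_n\log n=o(n)$, the combined exponent $(k_n-1)(\log n+|\log C|)=o(n)$ is still dominated by $\kappa n$, so the absorption goes through as claimed.
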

\begin{proof}
If $\E[X]=0$ and $\E[|X|^{4+\vep}]<\infty$, equation \eqref{eq:gsdglsrk} and the fact that $(\delta_n)$ can be chosen as any slowly varying sequence follow from the Borel--Cantelli lemma and the proof of Lemma 2.2 in \cite{bai:yin:krishnaiah:1988}.
Since $|X|\ge c>0$ and $\widehat{X}^2 \le \delta_n^2 n$ we get
\begin{equation}\label{eq:cr}
\begin{split}
\E[ \widehat{Y}_{1}^{2m_1}\cdots \widehat{Y}_{r}^{2m_r} ] &\le 
\delta_n^2 n \, \E[ \widehat{Y}_{1}^{2m_1}\cdots \widehat{Y}_{r-1}^{2m_{r-1}}\widehat{Y}_{r}^{2m_r-2} D^{-1}]\\
&\le \delta_n^2  \, \E[ \widehat{Y}_{1}^{2m_1}\cdots \widehat{Y}_{r-1}^{2m_{r-1}}\widehat{Y}_{r}^{2m_r-2}]/c\\
&= \frac{q_n}{n}  \, \E[ \widehat{Y}_{1}^{2m_1}\cdots \widehat{Y}_{r-1}^{2m_{r-1}}\widehat{Y}_{r}^{2m_r-2}]\,,
\end{split}
\end{equation}
for $q_n=n \delta_n^2/c$.
Choose $\delta_n=(\log n)^{-2}$ and $k_n=(\log n)^{1+\vep/3}$ with $\vep \in (0,1)$. We calculate
\begin{equation*}
\frac{k_n^3 q_n}{n}=c^{-1}\,(\log n)^{3+\vep-4} \to 0\,,
\end{equation*}
which finishes the proof.
\end{proof}

If $\E[X^4]=\infty$, a slowly varying sequence $(\delta_n)$ that satisfies \eqref{eq:gsdglsrk} does not always exist. For instance if $|X|$ is regularly varying with index $\alpha>0$, we have
\begin{equation*}
n^2 \P(|X| > \delta_n \sqrt{n})= n^{2-\alpha/2} \delta_n^{-\alpha} \ell(\delta_n \sqrt{n})
\end{equation*}
for some slowly varying function $\ell$.  For a slowly varying sequence $\delta_n\to 0$ we therefore have 
\begin{eqnarray}\label{eq:A}
\lim_{\nto} n^2 \, \P(|X| > \delta_n \sqrt{n}) =
\left\{\begin{array}{ll}
0 \,, & \mbox{if } \alpha>4\,, \\
\infty \,, & \mbox{if } \alpha<4\,.
\end{array}\right. 
\end{eqnarray}
As a consequence we need $\alpha=4$ and a particular relationship between $\ell$ and $\delta_n$ for both $\E[X^4]=\infty$ and \eqref{eq:gsdglsrk} to hold. An example of such a distribution is presented next.

\begin{example}[Weak limit of extreme eigenvalues under infinite fourth moment]\label{prop:exampleinfinite} {\em
Let $u>4$ and $s\in (4/u,1)$. Consider a symmetric random variable $X$ satisfying $|X|\ge c>0$ and
\begin{equation*}
\P(|X|>x)=x^{-4} (\log x)^{-s}\,, \quad x\ge x_0\,.
\end{equation*}
Then $\E[X^4]=\infty$. % Assume the iid entries $X_{it}$ have the same distribution as $X$. 
If $p/n \to \gamma \in (0,1]$, we have for the largest eigenvalue $\mu_{(1)}$ and the smallest eigenvalue $\mu_{(p)}$ of the sample correlation matrix $\bfR$:
\begin{equation}\label{eq:convprob}
\mu_{(1)} \cip (1+\sqrt{\gamma})^2 \quad \text{ and } \quad \mu_{(p)} \cip (1-\sqrt{\gamma})^2\,.
\end{equation}
Using the crude bounds in \eqref{eq:cr}, the almost sure convergence of the extreme eigenvalues is replaced by weak convergence.
}\end{example}
\begin{proof}[Proof of \eqref{eq:convprob}]
For the slowly varying sequence $\delta_n=(\log n)^{-u}$ and $\widehat{X}_{it}^{(n)}=X_{it} \1_{\{|X_{it}|\le \delta_n \sqrt{n}\}}$, we have due to $4u-s<0$,
\begin{equation*}
\begin{split}
\P(\X_n  \neq \widehat{\X}_n)&\le c n^2 \, \P(|X|>\delta_n \sqrt{n})\\
&\sim c\,2^{s} (\log n)^{4u-s} \to 0\,.
\end{split}
\end{equation*}
%This proves
%\begin{equation}\label{eq:gsejkbfsejkfn}
%\max\{ |\mu_{(1)}-\widehat{\mu}_{(1)}|,  |\mu_{(p)}-\widehat{\mu}_{(p)}| \} \cip 0\,.
%\end{equation}
By the proof of Corollary~\ref{cor:trunc}, condition $(\widehat{C}_q)$ holds. 
By Theorem \ref{thm:mu1convergence00}, we have \eqref{eq:dsgfses}.
Together with \eqref{eq:gsejkbfsejkfn} this finishes the proof.
\end{proof}

\subsection{A remark on the centered sample correlation matrix}

%{\blue ???} This remark is based on Section 2.3.4 in \cite{elkaroui:2009}. 
We presented results for the matrices $\bfR$ and $\X\X'$, assuming that $\E[X]=0$ when $\E[|X|]<\infty$. 
In practice, the expectation of $X$ typically has to be estimated. We discuss what has to be changed in the
aforementioned theory in this case. We consider the matrix $\widetilde{\X}\widetilde{\X}'$, where 
\begin{equation*}
\widetilde{X}_{it}= X_{it}- \overline{X}_i \quad \text{ and } \quad \overline{X}_i= \frac{1}{n} \sum_{t=1}^n X_{it}\,.
\end{equation*}
and the corresponding correlation matrix $\widetilde{\bfR} = \widetilde{\bfF}^{1/2} \widetilde{\X}\widetilde{\X}'\widetilde{\bfF}^{1/2}$, where $\widetilde{\bfF}$ is the $p\times p$ diagonal matrix with entries 
\begin{equation*}
\widetilde{F}_{ii}= \frac{1}{(\widetilde{\X}\widetilde{\X}')_{ii}}\,, \quad i=1,\ldots,p.
\end{equation*}
\par
In contrast to \eqref{eq:lamu} an application of Weyl's inequality \cite{bai:silverstein:2010} yields
\begin{equation}\label{eq:sgsgrs}
n^{-1} |\la_{(1)}(\X\X')- \la_{(1)}(\widetilde{\X}\widetilde{\X}')| \le n^{-1} \twonorm{\X\X'-\widetilde{\X}\widetilde{\X}'}\,,
\end{equation}
where, in general, the \rhs\ does not converge to zero. 
However, since $\X-\widetilde{\X}$ is a rank $1$ matrix, it is known from \cite{bai:silverstein:2010} that $n^{-1} \X\X'$ and $n^{-1}  \widetilde{\X}\widetilde{\X}'$ share the same limiting spectral distribution (if it exists) with right endpoint $b$ say. Therefore we have
\begin{equation*}
\liminf_{\nto} \frac{\lambda_{(1)}(\widetilde{\X}\widetilde{\X}')}{n} \ge b\,.
\end{equation*}
Following \cite{elkaroui:2009}, we let $\mathbf{H}= \bfI_n-n^{-1} \mathbf{1}\mathbf{1}'$, where $\mathbf{1}=(1,\ldots,1)'$. Then one can write $\widetilde{\X}=\X \mathbf{H}$ and since $\mathbf{H}$ is a symmetric matrix with $(n-1)$ eigenvalues equal to $1$ and one eigenvalue equal to $0$ we see that
\begin{equation*}
\la_{(1)}(\widetilde{\X}\widetilde{\X}') \le \la_{(1)}(\X\X')\,.
\end{equation*} 
We conclude
\begin{equation*}
\lim_{\nto} \frac{\lambda_{(1)}(\widetilde{\X}\widetilde{\X}')}{n} = b\quad \as
\end{equation*}
whenever $\la_{(1)}(\X\X')/n \to b$ a.s. Therefore the a.s. behavior of the largest eigenvalues of $\bfX\bfX'$ and $\wt \bfX\wt\bfX'$ is
closely related. %{\blue about the other eigenvalues? J: I am not aware of a nice result.} 
%{\red Find similar argument for $\widetilde{\bfR}$. that it is sufficient to study $\bfR$ as we do.}

Due to the shift and scale invariance of sample correlations, the aforementioned arguments remain valid for the ordered eigenvalues 
%{\blue is this obvious or does it follow from Jiang? J: They are the same.}
\begin{equation*}
\widetilde{\mu}_{(1)}\ge \cdots \ge \widetilde{\mu}_{(p)}
\end{equation*}
of $\widetilde{\bfR}$ if $\E[X^4]<\infty$ and $\E[X]=c$ (not necessarily zero), as shown in \cite{jiang:2004}. Then we have $\widetilde{\mu}_{(1)} \to (1+\sqrt{\gamma})^2$~ \as ~ and $\widetilde{\mu}_{(p)} \to (1-\sqrt{\gamma})^2$ ~ \as~
In Theorem 2 of \cite{jiang:2004} it was proven that if $\E[X^2]< \infty$ and $p/n\to \gamma\in (0,\infty)$, the empirical spectral distribution of $\widetilde{\bfR}$ converges weakly to the \MP law.

%----------------------------------------------------------------------

\section{Technical results}\label{sec:5.3}\setcounter{equation}{0}
In this section we provide most technical results required for the proofs of the main theorems. 
We develop a new approach which efficiently uses the structure of sample correlation matrices.
The goal of this section is to prove Proposition~\ref{prop:paths}.
\par
Throughout $(X_{it})$ are iid symmetric, which implies that the $Y_{it}$ are symmetric as well. We will study the moments
\begin{equation*}
\sum_{t_1,\ldots,t_k=1}^n \E[ Y_{i_1t_k} Y_{i_1t_1}  Y_{i_2t_1} Y_{i_2t_2} Y_{i_3t_2} Y_{i_3t_3} \cdots Y_{i_kt_{k-1}} Y_{i_kt_k}  ].
\end{equation*}
for $k\ge 1$  and various choices of {\em paths} $I=(i_1, i_2, \ldots, i_k)\in \{ 1,\ldots, p\}^k$. In this case, $\length(I)=k$ is the {\em length of the path}.
We say that a {\em path} $(i_1, i_2, \ldots, i_k)$ is an $r$-{\em path} if it contains exactly $r$ distinct components.
% i.e., $|\{i_1, i_2, \ldots, i_k\} |=r$. 
A path is {\em canonical} if $i_1=1$ and $i_l \le \max \{i_1, \ldots, i_{l-1}\} +1, l\ge 2$. 
A canonical $r$-path satisfies $\{i_1, i_2, \ldots, i_k\} = \{1, \ldots, r\}$.
Two paths are {\em isomorphic} if one becomes the other by a suitable permutation on $(1,\ldots,p)$. Each {\em isomorphism class} contains exactly one canonical path.
For more details and examples of these path notions consult Section 3.1.2 in \cite{bai:silverstein:2010} and the references therein.
%Furthermore, we say that a path contains a duplicate pair (DP) if and only if there exist $1\le l_1<l_2<l_3<l_4\le k$ such that $i_{l_1}=i_{l_3}\neq i_{l_2}=i_{l_4}$.
For $k\ge 1$, define
\beao
f(I,T)=\E[ Y_{i_1t_k} Y_{i_1t_1}  Y_{i_2t_1} Y_{i_2t_2} Y_{i_3t_2} Y_{i_3t_3} \cdots Y_{i_kt_{k-1}} Y_{i_kt_k}  ]\,,\quad I,T \in \{1,\ldots,k \}^k\,.
\eeao
Finally, define $F(\emptyset)=n$ and
\beao
F(i_1,\ldots,i_k)=F_n(i_1,\ldots,i_k)=\sum_{t_1,\ldots,t_k=1}^n f((i_1,\ldots,i_k),(t_1,\ldots,t_k))\,.
\eeao 
Note that $F(I_1)=F(I_2)$ if $I_1,I_2$ lie in the same isomorphism class. Therefore, whenever we are interested in 
$F(I)$ we can assume without loss of generality that $I$ is canonical. 
\par
In what follows, 
we will consider transformations of the path $I$ leading to a new path $S(I)$. 
For ease of notation, we will also assume $S(I)$ canonical. 
If it is not canonical, we can always work with its {\em canonical representative}, 
the unique canonical path in its isomorphism class. 
\par
 We will show that $F(I)$ can be often expressed as $F(S(I))$ times a certain power of $n$, where $S(I)$ is a shorter path than $I$. We start with two examples.
\begin{example}\label{ex:11}{\em
Let $I=(1,1,2,2)$ and recall that $(Y_{it})$ are symmetric. We have
\begin{equation*}
\begin{split}
F(1,1,2,2)&= \sum_{t_1,\ldots,t_4=1}^n \E[Y_{1t_4} Y_{1t_1}^2 Y_{1t_3}] \,\,\E[Y_{2t_2} Y_{2t_3}^2 Y_{2t_4}]\\
&= \sum_{t_2,t_4=1}^n \E\Big[Y_{1t_4} \underbrace{\sum_{t_1=1}^n Y_{1t_1}^2}_{=1} Y_{1t_3}\Big] \,\, \E\Big[Y_{2t_2} \underbrace{\sum_{t_3=1}^nY_{2t_3}^2}_{=1} Y_{2t_4}\Big]\\
&= \sum_{t_2=1}^n \underbrace{\E[Y_{1t_2}^2]}_{=n^{-1}}\underbrace{\E[Y_{2t_2}^2]}_{=n^{-1}}=n^{-1}\,.
\end{split}
\end{equation*}
Since $F(\emptyset)=n$ we get $F(1,1,2,2)=F(\emptyset) n^{-2}$, where $S(I)=\emptyset$ is interpreted as a path of length zero.
}\end{example}
\begin{example}\label{ex:12}{\em
For $I=(1,2,1,2,3,3)$ we get
\begin{equation*}
\begin{split}
F(1,2,1,2,3,3)&= \sum_{t_1,t_2,t_3,t_4,t_6=1}^n \E[Y_{1t_6} Y_{1t_1}Y_{1t_2} Y_{1t_3}] \,\,\E[Y_{2t_1}Y_{2t_2} Y_{2t_3} Y_{2t_4}] \,\,
\underbrace{\E\Big[Y_{3t_4} \sum_{t_5=1}^n Y_{3t_5}^2 Y_{3t_6}\Big]}_{=n^{-1} \1_{\{t_4=t_6\}}} \\
&= n^{-1} \sum_{t_1,\ldots,t_4=1}^n \E[Y_{1t_4} Y_{1t_1}Y_{1t_2} Y_{1t_3}] \,\,\E[Y_{2t_1}Y_{2t_2} Y_{2t_3} Y_{2t_4}]\\
&= n^{-1} F(1,2,1,2)\,.
\end{split}
\end{equation*}
Therefore we have $F(1,2,1,2,3,3)= F(S(I)) n^{-1}$ for the shorter path $S(I)=(1,2,1,2)$. 
}\end{example}

The ideas of Examples \ref{ex:11} and \ref{ex:12} will be formalized in the path-shortening algorithm and Lemma \ref{prop:PSI}.
When calculating values of $F$, the path-shortening function $PS$ will be useful. 
Let $I=(i_1,\ldots,i_k)\in \{1,\ldots,k \}^k$. $PS(I)$ is the output of the following algorithm.

\subsection*{Path-Shortening Algorithm $PS(I)$.}
\begin{itemize}
\item[Input: ] Path $I=(i_1,\ldots,i_k)$. Set $J=I$ and $R=0, \runs =0$. 
\item[Step 0:] Set $l= \length(I)$. Go to Step 1.
\item[Step 1:] Erase runs. 
\begin{itemize}
\item If $i_j=i_{j+1}$ for some $1\le j\le l$, where we interpret $i_{l+1}$ as $i_1$, erase element $i_j$ from the path. Set $I=(i_1, \ldots, i_{j-1}, i_{j+1}, \ldots, i_l)$, $\runs = \runs +1$ and return to Step 0.
\item Otherwise proceed with Step 2.
\end{itemize}
\item[Step 2:] Let $R_1$ be the number of elements of the path $I$ which appear exactly once. Set $R:=R+R_1$. Then define $I$ to be the resulting (possibly shorter) path which is obtained by deleting those $R_1$ elements from the path $I$. Go to Step 3.
\item[Step 3:] 
\begin{itemize}
\item If $J=I$, then return $(I,R,\runs)$ as output. 
\item If $J\neq I$, set $J:=I$ and return to Step 0.
\end{itemize}
\end{itemize} 

\begin{definition}\label{def:pathshortening}
The path-shortening function $PS$ is the output  $(S(I),R(I),\runs(I))$ of the Path-Shortening Algorithm (PSA) 
where $S(I)$ is the resulting shortened path and $R(I)$ is the total number of 
elements that were removed in Step 2 of the PSA. We write 
$PS(I)= (S(I),R(I),\runs(I))$.
\end{definition}

\subsection*{Properties of $PS(I)$.}
Clearly, ${\rm length}(S(I))\le {\rm length}(I)$. 
If $I=(1,\ldots,r)$ then $S(I)=\emptyset$, which shows that $S(I)$ can have length zero. 
Furthermore, all elements in $S(I)$ appear at least twice. If $I$ is an $r$-path then $R(I)\le r$.
%We introduce the notion of duplicate pairs, which are a part of every irreducible path under Algorithm~\ref{alg:psi}.
\begin{lemma}\label{prop:PSI}
For any $I \in \{1,\ldots,k \}^k$, we have $F(I)=F(S(I))\, n^{-R(I)}$.
\end{lemma}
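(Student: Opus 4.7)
The plan is to prove the identity by induction on $\length(I)$, showing that each elementary operation in the Path-Shortening Algorithm preserves a relation of the form $F(I) = F(\text{shorter path}) \cdot n^{-(\text{accumulated exponent})}$. The proof rests on two atomic reduction lemmas corresponding exactly to Step~1 and Step~2 of the PSA.

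\textbf{Atomic Lemma 1 (Run Erasure, no power of $n$).} If $i_j = i_{j+1}$ (cyclically, so we also allow $j=k$ with $i_{k+1}:=i_1$) and $I'$ is obtained by deleting $i_{j+1}$, then $F(I) = F(I')$. The proof is a single line: the index $t_j$ appears in $f(I,T)$ only through the factor $Y_{i_j t_j} Y_{i_{j+1} t_j} = Y_{i_j t_j}^2$, because rows $i_j$ and $i_{j+1}$ coincide; pulling $\sum_{t_j=1}^n Y_{i_j t_j}^2 = D_{i_j}/D_{i_j} = 1$ a.s.\ inside the expectation collapses the two adjacent factors into one and leaves exactly the cyclic product defining $f(I',T')$ (after the obvious relabeling of the remaining $t$-indices).

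\textbf{Atomic Lemma 2 (Singleton Removal, one factor of $n^{-1}$).} If $I$ has no cyclic runs and some element $i_j$ appears exactly once in $I$, and $I'$ is obtained by deleting $i_j$, then $F(I) = n^{-1} F(I')$. Here I would use both independence of rows and the symmetry of $X$. Because $i_j$ appears only once, the row-$i_j$ contribution to $f(I,T)$ is $Y_{i_j t_{j-1}} Y_{i_j t_j}$, and independence lets me factor the expectation as $\E[Y_{i_j t_{j-1}} Y_{i_j t_j}]$ times the expectation of the remaining (other-row) factors. The key symmetry observation is that flipping the sign of $X_{i_j t_{j-1}}$ flips $Y_{i_j t_{j-1}}$ but leaves $D_{i_j}$ (and hence every $Y_{i_j s}$ with $s \neq t_{j-1}$) unchanged, so conditioning on everything except this sign shows $\E[Y_{i_j t_{j-1}} Y_{i_j t_j}] = 0$ unless $t_{j-1} = t_j$, in which case exchangeability of columns inside $\sum_t Y_{i_j t}^2 = 1$ forces $\E[Y_{i_j t_{j-1}}^2] = 1/n$. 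Imposing $t_{j-1} = t_j$ identifies two cyclic positions in $T$; a careful relabeling of the surviving $t$-indices as $(t'_1,\dots,t'_{k-1})$ recovers exactly the cyclic product $f(I',T')$, yielding $F(I) = n^{-1} F(I')$.

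\textbf{Induction.} With the atomic lemmas in hand I proceed by strong induction on $\length(I)$. The base case is the empty path, or any $I$ on which the PSA does nothing, where $S(I)=I$, $R(I)=0$, and the identity is trivial (with the convention $F(\emptyset) = n$). For the inductive step, if the PSA begins with a Step~1 run-erasure producing $I'$, then by construction of the algorithm $S(I')=S(I)$ and $R(I')=R(I)$, while Atomic Lemma~1 gives $F(I) = F(I')$; combining with the inductive hypothesis yields $F(I) = F(S(I)) n^{-R(I)}$. If instead the PSA begins (after all current runs are erased) with a Step~2 deletion of a singleton, I peel off singletons one at a time: Atomic Lemma~2 gives $F(I) = n^{-1} F(I')$ with $R(I) = R(I')+1$ and $S(I) = S(I')$, and the inductive hypothesis finishes the argument. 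The main obstacle is the bookkeeping in Atomic Lemma~2 when the removed index sits at position $1$ or $k$ (so that the ``collapse'' happens across the cyclic boundary $t_0 = t_k$), and the symmetry step itself: since the $Y_{it}$ are \emph{not} independent across $t$, the vanishing of $\E[Y_{i t_{j-1}} Y_{i t_j}]$ for $t_{j-1} \neq t_j$ must be obtained via the conditional sign-flip argument above rather than by a naive product-measure symmetry.
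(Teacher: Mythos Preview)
Your proposal is correct and follows essentially the same approach as the paper: the paper also decomposes the PSA into exactly your two atomic reductions---showing that a single Step~1 run erasure leaves $F$ unchanged via $\sum_{t_j} Y_{i_j t_j}^2=1$, and that a single Step~2 singleton removal contributes a factor $n^{-1}$ via $\E[Y_{i_j t_{j-1}} Y_{i_j t_j}]=n^{-1}\1_{\{t_{j-1}=t_j\}}$---and then observes that the full PSA is a composition of such moves. Your framing as a formal induction and your explicit sign-flip justification for the vanishing of the mixed moment are slightly more detailed than the paper's presentation, but the substance is identical.
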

\begin{proof}
%{\blue delete: Let $k\ge 1$ and $I \in \{1,\ldots,k \}^k$.} 
We shall look at the changes made to $I$ in Steps 1 and 2 of the PSA  separately.\\
{\em Assume we are in Step 1.}
\begin{itemize}
\item If $i_j=i_{j+1}$ for some $1\le j\le l$, where we interpret $i_{l+1}$ as $i_1$, erase element $i_j$ from the path. Set $S_1(I)=(i_1, \ldots, i_{j-1}, i_{j+1}, \ldots, i_l)$.
\item Otherwise, $S_1(I)=I$.
\end{itemize}
Since Step 1 does not influence the value of $R$ it suffices to show $F(I)=F(S_1(I))$. 
If $S_1(I)=I$ there is nothing to show. Therefore assume $i_j=i_{j+1}$ for some $j$. In this case,  we have
\beam\label{eq:proofstep1}
F(I)%&=\sum_{t_1,\ldots,t_k=1}^n \E[ Y_{i_1t_k} Y_{i_1t_1} \cdots Y_{i_{j-1}t_{j-2}} Y_{i_{j-1}t_{j-1}}
%Y_{i_{j}t_{j-1}} Y_{i_{j}t_{j}}^2 Y_{i_{j}t_{j+1}}
%Y_{i_{j+2}t_{j+1}} Y_{i_{j+2}t_{j+2}} \cdots Y_{i_kt_{k-1}} Y_{i_kt_k}  ]\\
&=& \sum_{t_1,\ldots,t_{j-1}, t_{j+1}, \ldots, t_k=1}^n \E[ Y_{i_1t_k} Y_{i_1t_1}\cdots Y_{i_{j-1}t_{j-2}} Y_{i_{j-1}t_{j-1}} 
Y_{i_{j}t_{j-1}} \sum_{t_j=1}^n Y_{i_{j}t_{j}}^2 Y_{i_{j}t_{j+1}}\nonumber\\
&& \qquad \qquad Y_{i_{j+2}t_{j+1}} Y_{i_{j+2}t_{j+2}} \cdots Y_{i_kt_{k-1}} Y_{i_kt_k}  ]\nonumber\\
&=& \sum_{t_1,\ldots,t_{j-1}, t_{j+1}, \ldots, t_k=1}^n \E[ Y_{i_1t_k} Y_{i_1t_1} \cdots Y_{i_{j-1}t_{j-2}} Y_{i_{j-1}t_{j-1}} 
Y_{i_{j}t_{j-1}}  Y_{i_{j}t_{j+1}}\nonumber\\
&& \qquad \qquad Y_{i_{j+2}t_{j+1}} Y_{i_{j+2}t_{j+2}} \cdots Y_{i_kt_{k-1}} Y_{i_kt_k} ]= F(S_1(I))\,,
\eeam
where we used $\sum_{t=1}^nY_{it}^2=1$. This proves that Step 1 poses no problem.\\[1mm]
{\em Next we turn to Step 2.} Without loss of generality we can assume that $I$ 
does not contain any runs. If all elements of $I$ appear at least twice there is nothing to prove. 
Therefore assume the $j$th element $i_j$ appears only once and $R_1=1$. 
Let $S_2(I)$ denote the path $I$ with the $j$th element removed. 
Thus we have to show $F(I)=F(S_2(I))n^{-1}$.  In this case,  we have
\beam\label{eq:proofstep2}
F(I)&=&\sum_{t_1,\ldots,t_{j-1}, t_{j+1}, \ldots, t_k=1}^n \sum_{t_j=1}^n \E[ Y_{i_{j}t_{j-1}} Y_{i_{j}t_{j}} ]\nonumber\\&&\qquad\qquad\times \E[ Y_{i_1t_k} Y_{i_1t_1} \cdots Y_{i_{j-1}t_{j-2}} Y_{i_{j-1}t_{j-1}}
Y_{i_{j+1}t_{j}} Y_{i_{j+1}t_{j+1}} \cdots Y_{i_kt_{k-1}} Y_{i_kt_k}  ]\nonumber\\
&=&\sum_{t_1,\ldots,t_{j-1}, t_{j+1}, \ldots, t_k=1}^n n^{-1} \E[ Y_{i_1t_k} Y_{i_1t_1} \cdots Y_{i_{j-1}t_{j-2}} Y_{i_{j-1}t_{j-1}}
Y_{i_{j+1}t_{j-1}} Y_{i_{j+1}t_{j+1}} \cdots Y_{i_kt_{k-1}} Y_{i_kt_k}  ]\nonumber\\
&=& F(S_2(I))n^{-1}\,,
\eeam
where first and third equality come from writing out the definition of $F$.
For the second equality we used that $t_{j-1}=t_j$ is necessary for $\E[ Y_{i_{j}t_{j-1}} Y_{i_{j}t_{j}} ]$ to be non-zero and $\E[ Y_{i_{j}t_{j}}^2 ]=n^{-1}$. If $R_1>1$ we can apply the above argument iteratively to obtain $F(I)=F(S_2 \circ \cdots \circ S_2(I))n^{-R_1}$. The proof is complete.
\end{proof}
Define for $k\ge 1$, a function $g$ by $g(\emptyset)=1$ and
\begin{equation}\label{eq:defmaxg}
g(I)= \max_{T \in \{1,\ldots,k\}^k}\{ |T|\,:\, f(I,T)>0  \}\,,\qquad I \in \{1,\ldots,k \}^k\,.
\end{equation}
From now on, we assume the $I$-paths to be canonical.
\begin{lemma}\label{lem:9.5}
Let $I$ be a canonical $r$-path of length $k$.  
For any $T \in \{1,\ldots,k \}^k$ such that $f(I,T)>0$ we have $|T|\le k-r+1$. 
\end{lemma}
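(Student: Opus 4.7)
The plan is to interpret the pair $(I,T)$ as a closed walk on a bipartite (multi)graph whose vertex set is $\{i_1,\dots,i_k\}\cup\{t_1,\dots,t_k\}$ (so $r$ row vertices and $|T|$ column vertices), and then turn the non-vanishing of $f(I,T)$ into an edge counting inequality.

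First I would rewrite $f(I,T)$ by exploiting the independence of rows of $\bfX$. Since the rows of $\bfY$ are independent, one has
\begin{equation*}
f(I,T)=\prod_{i=1}^{r}\E\!\left[\prod_{t=1}^{n} Y_{i,t}^{a_{i,t}}\right],
\end{equation*}
where $a_{i,t}=a_{i,t}(I,T)$ counts the number of times the factor $Y_{i,t}$ appears in
$Y_{i_1t_k}Y_{i_1t_1}Y_{i_2t_1}\cdots Y_{i_kt_{k-1}}Y_{i_kt_k}$. The key observation is that for each fixed row $i$ and each fixed column $t$, the map $X_{i,t}\mapsto -X_{i,t}$ (applied to one coordinate only, with the others fixed) preserves the joint law of $(X_{i,1},\dots,X_{i,n})$ because the $X_{i,\cdot}$ are iid symmetric, and it leaves $D_i=\sum_s X_{i,s}^2$ unchanged, so it simply flips the sign of $Y_{i,t}$. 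Consequently, if any $a_{i,t}$ is odd the corresponding row-expectation vanishes. Hence
\begin{equation*}
f(I,T)>0\quad\Longrightarrow\quad a_{i,t}(I,T)\text{ is even for every }(i,t).
\end{equation*}

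Next I would set up the combinatorial model. Consider the bipartite multigraph $G=G(I,T)$ whose vertex sets are the distinct $i$-values (there are $r$ of them, since $I$ is an $r$-path) and the distinct $t$-values (there are $|T|$ of them), with edge multiset
\begin{equation*}
E(G)=\bigl\{(i_j,t_{j-1}),(i_j,t_j)\colon j=1,\dots,k\bigr\},\qquad t_0:=t_k.
\end{equation*}
The sequence $t_k,i_1,t_1,i_2,t_2,\dots,i_k,t_k$ is a closed walk $W$ in $G$ traversing $2k$ edges. Each edge $(i,t)$ of the underlying simple graph is used exactly $a_{i,t}(I,T)$ times by $W$, so by the previous paragraph each edge of the support graph of $W$ is used an even number of times. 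Therefore the number of \emph{distinct} edges satisfies
\begin{equation*}
|E_{\mathrm{dist}}|\le\frac{2k}{2}=k.
\end{equation*}

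Finally, the walk $W$ visits every vertex of $G$ by construction (each of the $r+|T|$ vertices appears somewhere in the sequence), so the support graph is connected on $r+|T|$ vertices, and a connected graph needs at least $r+|T|-1$ edges. Combining,
\begin{equation*}
r+|T|-1\le|E_{\mathrm{dist}}|\le k,
\end{equation*}
which is exactly $|T|\le k-r+1$. The only step that needs any care is the symmetry argument showing $a_{i,t}$ must be even; the rest is a short graph-theoretic computation.
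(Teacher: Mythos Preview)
Your proof is correct and substantially cleaner than the paper's own argument. The symmetry step is sound: since the $X_{i,t}$ are iid symmetric, flipping one $X_{i,t}$ leaves $D_i$ and all other $Y_{i,s}$ fixed while negating $Y_{i,t}$, so each $a_{i,t}$ must be even for the row-expectation to survive; and because even powers are nonnegative, this is equivalent to $f(I,T)>0$, not merely $\neq 0$. The closed-walk/connected-bipartite-graph counting then gives $r+|T|-1\le |E_{\mathrm{dist}}|\le k$ in two lines.

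The paper takes a genuinely different route. It introduces the Path-Shortening Algorithm and proves the bound via the identity $g(I)=g(S(I))+\runs(I)$, together with a case analysis: if $S(I)=\emptyset$ then $g(I)=k-r+1$ exactly, while if $S(I)\neq\emptyset$ one argues $g(S(I))\le \tfrac12\,\length(S(I))$ and combines this with bounds on $\runs(I)$ and $R(I)$ to get the stronger conclusion $g(I)\le k-r$. Your graph argument is more elementary and yields the lemma directly, but the paper's detour is not wasted effort: the PSA machinery (the functions $S(I)$, $R(I)$, $\runs(I)$) and the sharper dichotomy ``$g(I)=k-r+1$ iff $S(I)=\emptyset$'' are exactly what drive the subsequent Proposition~\ref{prop:paths} and the moment bounds for $\E[\tr\bfR^k]$. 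In short, your approach buys a shorter self-contained proof of the lemma; the paper's approach buys the structural information needed downstream.
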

\begin{proof}
%Let $I=(i_1,\ldots,i_k)$ be a canonical $r$-path of length $k$, and $T= (t_1,\ldots,t_k)\in \{1,\ldots,k\}^k$. 
Without loss of generality we may assume that $T$ is canonical. 
We shall sometimes refer to the $t_i$'s as $t$-indices. 
In the beginning one should think of the $t$-indices as pairwise distinct whenever possible. 
Their actual values are not relevant for the value $f(I,T)$. In all cases, except $r=1$, there are certain $t$-indices that have to coincide such that $f(I,T)$ can be positive: $t_i=t_j$ for some $i,j$, $i\neq j$. This is due to the symmetry of $X$.
% and taken into account in Step 2 of the Path-Shortening Algorithm and the part of the proof of Proposition~\ref{prop:PSI} which is concerned with Step 2. 
We will see that in some cases these $i,j$ are not unique. More precisely, it may happen that there is a set $\{ t_{i_1}, t_{i_2}, \ldots,t_{i_{2a}}\}$ with $i_1, \ldots,i_{2a}$ distinct such that $|\{ t_{i_1}, t_{i_2}, \ldots,t_{i_{2a}}\}|\le a$ is necessary for $f(I,T)>0$. In these cases, the cardinality of
$T$ is less than $k$ provided $f(I,T)>0$.  
\par
We start with the two simplest cases. If $r=1$, we have $f(I,T)>0$ for any $T$, and 
hence $|T|\le k = k-r+1$. Moreover, if $r=k$, we have $f(I,T)>0$ if and only if $t_1=\cdots=t_k$, and hence $|T|=1 = k-r+1$.
\par
Now we assume $1<r<k$.
Our arguments will rely on the proof of Lemma~\ref{prop:PSI}.
Clearly, $1\le g(I)\le k$. From the definition of $\runs$ in the PSA we have $\runs(I)\le k-r$. From \eqref{eq:proofstep1} and \eqref{eq:proofstep2} one infers 
\begin{equation}\label{eq:sglmsoe}
\runs(I)+1\le g(I)\le k-R(I)\,.
\end{equation}
First, we analyze paths $I$ with $S(I)=\emptyset$, or equivalently $\length(S(I))=0$. This implies $\runs(I)=k-r$; otherwise the path-shortening function stops earlier and $S(I)\neq \emptyset$. Therefore, we get from the identity
\begin{equation}\label{eq:identityg}
g(I)=g(S(I)) + \runs(I)\,
\end{equation}
that $g(I)= k-r+1$, which finishes the proof in the case $S(I)=\emptyset$. 
Note that \eqref{eq:identityg} holds for all $I$. This follows from the proof of Lemma~\ref{prop:PSI}.
\par

Next, assume $S(I)\neq \emptyset$. In this case, we immediately see
\begin{equation}\label{eq:lengthsi}
\length(S(I))=k-R(I)-\runs(I)\,.
\end{equation}
Since each element in $S(I)$ has to appear at least twice and $r\ge 2$ we have $\length(S(I))\ge 4$. Moreover, $S(I)$ has $r-R(I)\ge 2$ distinct components.  
As a consequence, it must hold
\begin{equation}\label{eq:boundri}
\runs(I) \le k-r- (r-R(I)) =k-2r+R(I)\,.
\end{equation}
In view of \eqref{eq:identityg}, we have to bound $g(S(I))$. 
Without loss of generality $S(I)$ may be assumed canonical: there is exactly 
one canonical path in the isomorphism class of $S(I)$ and every path in an isomorphism class has the same $g$-value.
If, for example, $S(I)$ happens to be $(3,4,3,4)$, then we will work with the canonical representative $(1,2,1,2)$.
Write $S(I)= (s_1, \ldots, s_{\length(S(I))})$. Since $S(I)$ is canonical, we have
\begin{equation*}
\{s_1, \ldots, s_{\length(S(I))}\} = \{1,\ldots,r-R(I)\}\,.
\end{equation*}
For $ i=1,\ldots,r-R(I)$ define $N_i:= |\{1\le j\le \length(S(I)) : s_j=i\}|$. If we now let $L_i$ be the set of all $u$ such that $(i,t_u)$ appears as an index in $f(S(I),T)$, then $|L_i|=2 N_i$. Finally, define
\begin{equation*}
T_i:= \{t_j\,:\, j\in L_i \} \quad \text{ and } \quad \widetilde{T}_i:= (t_j\,:\, j\in L_i )\,, \quad i=1,\ldots,r-R(I)\,.
\end{equation*}
For example, if $I=(1,2,1,2,3,3)$, then $k=6,r=3$, $N_1=N_2=2$ and we have 
$(S(I),R(I),\runs(I))=((1,2,1,2),1,1)$ and $L_1=L_2=\{1,2,3,4\}$.
\par
By construction, $f(S(I),T)$ can only be positive if $|T_i| \le N_i$. More precisely every $t$-index in the vector $\widetilde{T}_i$ needs to coincide with at least $1$ other $t$-index of this vector. Otherwise,
$\E\big[\prod_{u\in L_i} Y_{i,t_u} \big]=0$
which would imply $f(S(I),T)=0$. The quantity $g(S(I))$ is the maximum number of distinct $t$-indices such that $f(S(I),T)>0$. Hence, there can be at most $\length(\widetilde{T}_i)/2$ distinct $t$-indices in $\widetilde{T}_i$. Since each $t_j$ appears exactly twice in $(\widetilde{T}_1, \ldots, \widetilde{T}_{r-R(I)})$, 
\begin{equation}\label{eq:gsings}
g(S(I))\le 0.5\,\length(S(I))\,.
\end{equation}
%A detailed argument to prove $\eqref{eq:gsings}$ is the following:  By setting two $t$-indices equal you can create at most two new pairs in all the vectors $\widetilde{T}_i, i=1,\ldots,r-R(I)$ combined. At the start $|T_i|+\cdots+|T_{r-R(I)}|=2 \length(S(I))$. After setting one pair of $t$-indices equal we have
%\begin{equation}\label{eq:gdrstgdrt}
%|T_i|+\cdots+|T_{r-R(I)}|\ge 2 \length(S(I)) -2\,.
%\end{equation}
%After setting $v$ pairs of $t$-indices equal we have $|T_i|+\cdots+|T_{r-R(I)}|\ge 2 \length(S(I)) -2 v$. Remember that we have to achieve $|T_i|+\cdots+|T_{r-R(I)}|\le  \length(S(I))$, for which $v\ge \length(S(I))/2$  is a necessary condition. Finally, $g(S(I))=\length(S(I))-v(I)$, where $v(I)\ge \length(S(I))/2$. This proofs $\eqref{eq:gsings}$.
\par
Now we are ready to finish the proof of the lemma.
By \eqref{eq:identityg}, \eqref{eq:gsings}, \eqref{eq:lengthsi}, \eqref{eq:boundri},  
in this order, one obtains
\begin{equation*}
\begin{split}
g(I)&= g(S(I))+ \runs(I) \le \frac{\length(S(I))}{2}+ \runs(I)\\
&= \frac{k-R(I)-\runs(I)}{2}+ \runs(I)\\
&\le \frac{k-R(I)+k-2r+R(I)}{2} =k-r\,.
\end{split}
\end{equation*}
\end{proof}
\begin{remark}\label{re:notunique}{\em 
The above proof reveals that $g(I)=k-r+1$ if and only if $S(I)= \emptyset$. For $r$-paths $I$ of length $k$ with $S(I)\neq \emptyset$, the bound $g(I)\le k-r$ is sharp. Consider for instance $I=(1,2,1,2)$, where 
\begin{equation*}
f(I,T)=\E[ Y_{1t_1}Y_{1t_2}Y_{1t_3}Y_{1t_4} ]\, \E[ Y_{2t_1}Y_{2t_2}Y_{2t_3}Y_{2t_4} ]=(\E[ Y_{t_1}Y_{t_2}Y_{t_3}Y_{t_4} ])^2\,.
\end{equation*}
From this relation it is easily deduced that the only canonical representatives $T=(t_1,t_2,t_3,t_4)$ leading to $f(I,T)>0$ are $(1,1,2,2),(1,2,1,2),(1,2,2,1)$ and $(1,1,1,1)$. The first three of them have the highest number of distinct values. We conclude $g(I)=2$. In general, the canonical paths $T$ for which the maximum in \eqref{eq:defmaxg} is attained are not unique, whenever $g(I)\le k-r$. On the other hand, if $g(I)= k-r+1$ there exists exactly one canonical $(k-r+1)$-path $T$ of length $k$ for which the maximum is obtained. This is an immediate consequence of the above proofs. In \cite{bai:silverstein:2010}, Bai and Silverstein present a way to describe this $T$. 
}\end{remark}

For a canonical $r$-path $I$ of length $k$ let 
\begin{equation}\label{eq:defdr}
d(I)= k-r+1-g(I)\,.
\end{equation}
The function $d$ satisfies $0\le d(I) \le k-r$ and $d(S(I))=d(I)$.
The set of canonical $r$-paths of length $k$, denoted by $\mathcal{I}_{r,k}$, can be written as a disjoint union
\begin{equation*}
\mathcal{I}_{r,k}= \bigcup_{u=0}^{k-r} \mathcal{I}_{r,k}(u)\,,
\end{equation*} 
where $\mathcal{I}_{r,k}(u)$ contains those $I$ with $d(I)=u$.
\par
Lemma 3.4 in \cite{bai:silverstein:2010} determines the cardinality of $\mathcal{I}_{r,k}(0)$:
for $k\in \N$ and $r\le k$, 
\beam\label{lem:lemma3.4}
|\mathcal{I}_{r,k}(0)| = \frac{1}{r} \binom{k}{r-1} \binom{k-1}{r-1}\,.
\eeam
%\begin{remark}{\em 
%We already know $|\bigcup_{u=1}^{k-r} \mathcal{I}_{r,k}(u)|>0$, which contradicts the claim in \cite[p.~42]{bai:silverstein:2010} that ``in this category, $r+s=k$''.
%}\end{remark}
\begin{proposition}\label{prop:paths} Assume condition \eqref{eq:assumptionq}. Then the following statements hold for any 
$r$-path $I$ of length $k\ge 1$ and $1\le r\le k$: 
\begin{itemize}
\item[(1)] If $S(I)=\emptyset$, then $F(I)= n^{1-r}$.
\item[(2)] In general, we have 
\begin{equation}\label{eq:proppaths}
F(I)\le 2 \, n^{1-r-d(I)} (2k)^{d(I)} q^{d(I)}\,.
\end{equation}
\end{itemize}
\end{proposition}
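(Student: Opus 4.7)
The plan is to leverage Lemma \ref{prop:PSI} to reduce $F(I)$ to $F(J)$ for the (generically shorter) path $J:=S(I)$, and then bound a single term $f(J,T)$ row by row using iterated applications of \eqref{eq:assumptionq} before summing combinatorially over the choices of $T$.

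Part~(1) is essentially immediate from Lemma \ref{prop:PSI}: it gives $F(I)=F(\emptyset)\,n^{-R(I)}=n^{1-R(I)}$, so it suffices to argue $R(I)=r$. One either observes directly that a label erased by Step~2 of the PSA never reappears, so once $S(I)=\emptyset$ each of the $r$ distinct labels of $I$ has contributed exactly once to $R(I)$; or one combines $\length(S(I))=k-R(I)-\runs(I)=0$ with the identity $\runs(I)=k-r$ derived inside the proof of Lemma \ref{lem:9.5}.

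For part~(2) the case $d(I)=0$ reduces to part~(1) via Remark \ref{re:notunique}, so I assume $d:=d(I)\ge 1$ and put $J:=S(I)$, $r':=r-R(I)$, $k':=\length(J)$. Since $d(J)=d$, Lemma \ref{prop:PSI} reduces the claim to $F(J)\le 2\,n^{1-r'-d}(2k)^d q^d$. To estimate a single term $f(J,T)$ I factor the expectation across rows by independence. On row $i$, grouping positions by coincident $t$-index gives even multiplicities $2m_{i,1},\dots,2m_{i,s_i}$ with $\sum_j m_{i,j}=N_i$ (the number of occurrences of label $i$ in $J$), since symmetry kills any odd contribution. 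Iterating \eqref{eq:assumptionq} exactly $N_i-s_i$ times brings every exponent down to $2$ and produces $(q/n)^{N_i-s_i}$, while the surviving $\E[Y_{i,1}^2\cdots Y_{i,s_i}^2]$ is bounded by $1/(n(n-1)\cdots(n-s_i+1))$ via $\sum_t Y_{i,t}^2=1$ and exchangeability; this in turn is at most $2/n^{s_i}$ thanks to $(k^3 q)/n\to 0$. Multiplying across the $r'$ rows with $\sum_i N_i=k'$ yields
\[
f(J,T)\le 2\,q^{\,k'-\sum_i s_i(T)}\,n^{-k'}
\]
uniformly in admissible $T$.

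The final step is to sum this bound by indexing $T$'s by the defect $u(T):=k'-\sum_i s_i(T)\ge 0$. Lemma \ref{lem:9.5} gives $|T|\le g(J)=k'-r'+1-d$, so every non-vanishing $T$ already carries defect $u(T)\ge d$. I would bound the number of $T$'s with defect exactly $u$ by (const)$\cdot n^{g(J)}\cdot (2k/n)^{u-d}$: the factor $n^{g(J)}$ counts the ``skeleton'' shapes with $|T|=g(J)$ via a Bai--Silverstein canonical-path enumeration in the spirit of \eqref{lem:lemma3.4}, while each additional forced coincidence of positions beyond the skeleton pays one combinatorial choice $\le 2k$ (essentially a position in the length-$k$ cycle) and one factor $n^{-1}$. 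Summing over $u\ge d$ then gives a geometric-type series with ratio $(2kq)/n\to 0$, so the $u=d$ term dominates and produces the announced bound $2\,n^{1-r'-d}(2k)^d q^d$. The principal obstacle is precisely this last counting step: justifying the combinatorial factor $(2k)^{u-d}$ for arbitrary defect $u\ge d$, using that $J=S(J)$ is canonical, has no cyclic runs, and that every label is visited at least twice -- this is where the structural content of the Path-Shortening Algorithm is used decisively.
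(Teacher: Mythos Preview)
Your Part~(1) is fine and matches the paper. The gap is in Part~(2), at the sentence ``Lemma~\ref{lem:9.5} gives $|T|\le g(J)=k'-r'+1-d$, so every non-vanishing $T$ already carries defect $u(T)\ge d$.'' This inference is false. The row-wise defect $u(T)=k'-\sum_i s_i(T)$ is \emph{not} bounded below by $d$, because a single distinct $t$-value can contribute to several of the $s_i$'s (it is counted once for every row whose index set $L_i$ meets the positions carrying that value). Concretely, take $J=(1,2,1,2)$ so that $k'=4$, $r'=2$, $L_1=L_2=\{1,2,3,4\}$, $g(J)=2$, $d(J)=1$. For $T=(1,1,2,2)$ one has
\[
f(J,T)=\E[Y_{1,1}^2 Y_{1,2}^2]\,\E[Y_{2,1}^2 Y_{2,2}^2]=(\E[Y_1^2Y_2^2])^2>0,
\]
and $s_1=s_2=2$, hence $u(T)=0<1=d$. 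In fact \emph{all} canonical $T$ with $|T|=g(J)$ in this example satisfy $u(T)=0$. Your enumeration formula $(\text{const})\cdot n^{g(J)}(2k/n)^{u-d}$ then has a negative exponent at the dominant patterns and is meaningless there. The row-by-row bound $f(J,T)\le 2\,q^{u(T)}n^{-k'}$ is correct, but it is blind to exactly the cross-row coincidence constraints that make $d(J)\ge 1$; to finish along your lines you would need a two-parameter count of admissible canonical $T$-patterns by the pair $(|T|,u(T))$, and that combinatorics is not addressed.

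The paper proceeds differently and never tracks $u(T)$. It first bounds the number of canonical $g(J)$-patterns $Q$ by $(2d+1)!!\le (2k)^d$. Then, for each fixed maximal pattern $Q$, it compares the full sub-sum $F_Q(J)=\sum_{T\preccurlyeq Q} f(J,T)$ against $F(I_0)$ for a reference path $I_0\in\mathcal{I}_{r',k'}(0)$ having the same multiplicities $N_1,\dots,N_{r'}$, via an explicit \emph{matching} of summands stratified by the ``class'' $y=\sum_i s_i$. Condition~\eqref{eq:assumptionq} enters during this matching, when a summand in class $y$ on one side is paired with a summand in class $y'>y$ on the other, producing the factor $q^{d}$; combined with $F(I_0)=n^{1-r'}$ this gives $F_Q(J)\le 2\,q^{d}n^{1-r'-d}$, and multiplying by the $(2k)^d$ patterns yields~\eqref{eq:proppaths}. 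The point is that the power of $q$ arises from comparing two \emph{sums} with different effective dimensions ($n^{g(I_0)}$ versus $n^{g(I_0)-d}$), not from bounding a single $f(J,T)$.
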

\begin{proof}
$S(I)=\emptyset$ is equivalent to $R(I)=r$. By Lemma~\ref{prop:PSI}, 
\begin{equation*}
F(I)=F(S(I)) n^{-R(I)} =F(\emptyset)n^{-r}=n^{1-r}\,.
\end{equation*}
Therefore, we only have to prove \eqref{eq:proppaths} for paths $I$ with $d(I)\ge 1$. Without loss of generality we assume $S(I)$ is a canonical $(r-R(I))$-path. We use the notation for paths with $S(I)\neq \emptyset$ developed in the proof of Lemma~\ref{lem:9.5}. We know that
\begin{equation*}
S(I)= (\pi_1, \ldots, \pi_{\length(S(I))})\,,
\end{equation*}
where $\pi_1, \ldots, \pi_{\length(S(I))}$ is a permutation of the path
\begin{equation*}
I_0 = (\underbrace{1,\ldots,1}_{N_1},\underbrace{2,\ldots,2}_{N_2}, \ldots, \underbrace{r-R(I),\ldots,r-R(I)}_{N_{r-R(I)}})\,.
\end{equation*}
Clearly, $I_0 \in \mathcal{I}_{r-R(I),\length(S(I))}(0)= \mathcal{I}_{r-R(I),k-R(I)-\runs(I)}(0)$. By Lemma~\ref{lem:9.5}, 
\begin{equation*}
g(I_0)=(k-R(I)-\runs(I))-(r-R(I))+1 = k-r-\runs(I)+1
\end{equation*}
and by definition of the function $d(\cdot)$,
\begin{equation*}
g(S(I))=k-r-\runs(I)+1 -d(S(I))=k-r-\runs(I)+1-d(I)\,.
\end{equation*}
\par
The main idea will be to compare $F(S(I))$ to $F(I_0)$. Both of them are sums of expressions of the type
\begin{equation}\label{eq:type}
\prod_{i=1}^{r-R(I)} \E\Big[Y_{i1}^{2m_{i,1}}\cdots Y_{is_i}^{2m_{i,s_i}} \Big] = \prod_{i=1}^{r-R(I)} 
\E\Big[Y_{1}^{2m_{i,1}}\cdots Y_{s_i}^{2m_{i,s_i}} \Big]\,, 
\end{equation}
%\begin{equation}\label{eq:type}
%\prod_{i=1}^{r-R(I)} \E\Big[ \frac{X_{i1}^{2m_{i,1}}\cdots X_{is_i}^{2m_{i,s_i}}]}{D_i^{N_i}} \Big] = \prod_{i=1}^{r-R(I)} \E\Big[ \frac{X_{11}^{2m_{i,1}}\cdots X_{1s_i}^{2m_{i,s_i}}]}{D_1^{N_i}} \Big]\,, 
%\end{equation}
where for all $i=1,\ldots,r-R(I)$, $1\le s_i\le N_i$, $m_{i,j}\ge 1$ for all $j\ge 1$ and $m_{i,1}+\cdots +m_{i,s_i}=N_i$. We write 
\begin{equation}\label{eq:sdfesofj}
\mathbf{s}=(s_1,\ldots,s_{r-R(I)}) \quad \text{ and } \quad \mathbf{m}_i=(m_{i,1},\ldots,m_{i,s_i})\,,\quad i=1,\ldots,r-R(I)\,.
\end{equation}
Observe that in 
\beao
F(I_0) &=& \sum_{t_1,\ldots,t_{N_1+\cdots+N_{r-R(I)}}=1}^n \E\Big[ Y_{t_{N_1+\cdots+N_{r-R(I)}}} Y_{t_1}^2 
\cdots Y_{t_{N_1-1}}^2Y_{t_{N_1}} \Big] \cdots \\
%\E\Big[ \frac{X_{2,t_{N_1}} X_{2,t_{N_1+1}}^2 \cdots X_{2,t_{N_1+N_2-1}}^2X_{2,t_{N_1+N_2}}}{D_2^{N_2}} \Big]\\ 
& &\qquad\qquad \cdots \E\Big[Y_{t_{N_1+\cdots+N_{r-R(I)-1}}} Y_{t_{N_1+\cdots+N_{r-R(I)-1}}+1}^2 \cdots Y_{t_{N_1+\cdots+N_{r-R(I)}}-1}^2Y_{,t_{N_1+\cdots+N_{r-R(I)}}} \Big]
\eeao
the non-zero summands have to satisfy $t_{N_1}=t_{N_2}= \cdots =t_{N_1+\cdots+N_{r-R(I)}}$.  Hence, the above sum is effectively a 
sum only over $g(I_0)$ $t$-indices. The point we want to stress is that there is never a choice, in the sense that even though there are $g(I_0)$ distinct $t$-indices, something like $t_{N_1}=t_1\neq t_2=t_{N_1+N_2}$ is never possible. The reason is that the associated canonical $g(I_0)$-path for the $t$-indices is unique.

For $S(I)\neq \emptyset$, however, the associated canonical $g(S(I))$-path for the $t$-indices is not unique, as mentioned in Remark~\ref{re:notunique}. Depending on the sets $L_i$ there are several possibilities. For instance, for $S(I)=(1,2,1,2)$ we have $L_1=L_2=\{1,2,3,4\}$, $d(I)=1$ and $\length(S(I))=4$. To produce a positive summand one needs $|\{ t_1,t_2,t_3,t_4\}|\le 2$ with every $t$-index appearing at least twice. In this case, $t_1$ has to take the same value as one of the other three $t$-indices. Then there are two $t$-indices left 
which all have to appear at least twice. In this specific example, there are three  canonical paths of $t$-indices 
which are listed in Remark~\ref{re:notunique}. 

We are interested in the general case. How many distinct canonical $g(S(I))$-paths $T$ of length $\length(S(I))$ with $f(S(I),T)>0$ can exist? With the reasoning which lead to \eqref{eq:gsings} one can show that this number is at most $(2d(I)+1)!!$. This bound is attained if $N_1=N_2=\length(S(I))/2$ which implies $L_1=L_2=\{1,\ldots,\length(S(I))\}$. %Then there is equality in \eqref{eq:gdrstgdrt}. 

For our purpose we will use a much larger bound, namely
\begin{equation}\label{eq:nnnnnnnnn}
(2d(I)+1)!! = (2d(I)+1) (2d(I)-1)\cdots 3 \le (2d(I)+1)^{d(I)} \le (2k)^{d(I)}\,.
\end{equation}

Now let us compare $F(S(I))$ and $F(I_0)$, which look very similar at first sight. The main difference is the dimension of the index sets in the summation. While the sum for $F(I_0)$ contains $n^{g(I_0)}$ positive elements, the sum for $F(S(I))$ has at most $(2k)^{d(I)} n^{g(I_0)-d(I)}$ positive elements. Let $Q_{S(I)}$ denote the set of all canonical $T$ for which the maximum in \eqref{eq:defmaxg} is attained. By the above considerations, $| Q_{S(I)}| \le (2k)^{d(I)}$. Each element in $Q_{S(I)}$ corresponds to a different configuration of $t$-indices in $F(S(I))$, i.e., it tells us which $t$-indices have to be equal. Therefore, we have
\begin{equation}\label{eq:lkhdtoh}
F(S(I))\le \sum_{Q\in Q_{S(I)}} F_Q(S(I))\,,
\end{equation} 
where $F_Q(S(I))$ is defined as follows. Write $Q=(q_1,\ldots, q_{\length(S(I))})$. By construction,\\
 $\{ q_1,\ldots, q_{\length(S(I))}\} = \{1,\ldots, g(S(I))\}$. Set $K_j=\{ 1\le i\le \length(S(I)) :  q_i=j\}$. Then 
\begin{equation}\label{eq:FQ}
F_Q(S(I))= \mathop{\sum_{t_1,\ldots,t_{\length(S(I))}=1}}_{t_l=t_m \, \forall l,m \in K_j\,, 1\le j\le g(S(I))}^n f(S(I),(t_1, \ldots,t_{\length(S(I))}))\,.
\end{equation}
We will show later that 
\begin{equation}\label{eq:assmomentnow}
F_Q(S(I)) \le 2\, q^{d(I)} n^{-d(I)} F(I_0)\,, \quad Q\in Q_{S(I)}\,.
\end{equation}
Then it follows from \eqref{eq:lkhdtoh} and \eqref{eq:assmomentnow} that
\begin{equation*}
\begin{split}
F(S(I))&\le \sum_{Q\in Q_{S(I)}} F_Q(S(I))\\
&\le (2k)^{d(I)} 2\, q^{d(I)} n^{-d(I)} F(I_0) = 2\, (2k)^{d(I)} q^{d(I)} n^{-d(I)} n^{1-r+R(I)}\,.
\end{split}
\end{equation*}
Finally, an application of Lemma~\ref{prop:PSI} gives 
\begin{equation*}
F(I) =n^{-R(I)}F(S(I))\le 2\, (2k)^{d(I)} q^{d(I)} n^{-d(I)} n^{1-r}\,,
\end{equation*}
which completes the proof. 
\par
Next, we show \eqref{eq:assmomentnow} by matching each of the $n^{g(I_0)-d(I)}$ positive summands in \eqref{eq:FQ} with $n^{d(I)}$ of the $n^{g(I_0)}$ positive summands in $F(I_0)$, where we recall that
\begin{equation}\label{eq:FI0}
F(I_0)= \mathop{\sum_{t_1,\ldots,t_{\length(S(I))}=1}}_{t_{N_1}=t_{N_2}= \cdots =t_{N_1+\cdots+N_{r-R(I)}}}^n f(I_0,(t_1, \ldots,t_{\length(S(I))}))\,.
\end{equation}
\par
By {\em matching} we mean the following. Assume we want to prove 
\begin{equation}\label{eq:stggsrbjklfg}
\sum_{i=1}^n A_i \le  \sum_{j=1}^m B_j
\end{equation}
for nonnegative $A_i,B_i$ and $m\ge n$. If for every $i=1,\ldots,n$ there exists a $j_i\in \{1,\ldots, m\}$ such that $A_i\le B_{j_i}$ and the $j_i$'s are distinct, then \eqref{eq:stggsrbjklfg} holds. In this case, we say that each $A_i$ is matched by some $B_{j_i}$.
\par
We say that $f(S(I),(t_1, \ldots,t_{\length(S(I))}))$ and  $f(I_0,(t_1, \ldots,t_{\length(S(I))}))$ are in class $y=\sum_{i=1}^{r-R(I)}s_i$ if they can be written in the form 
\begin{equation}\label{eq:type11}
\prod_{i=1}^{r-R(I)} \E\big[ Y_{1}^{2m_{i,1}}\cdots Y_{s_i}^{2m_{i,s_i}} \big]\,.
\end{equation}
By construction, $y$ takes values in the set $\{ r-R(I), \ldots,\length(S(I)) \}$. A summand in class $y$ is fully determined by the vector $(\mathbf{s},\mathbf{m}_1,\ldots, \mathbf{m}_{r-R(I)})=:(\mathbf{s},\mathbf{m})$; see \eqref{eq:sdfesofj} for this notation.
Hence, we call this summand of type $(\mathbf{s},\mathbf{m})$ and denote it $f_{\mathbf{s},\mathbf{m}}$. Note that the class $y$ is comprised of all elements of type $(\mathbf{s},\mathbf{m})$ such that $\sum_{i=1}^{r-R(I)}s_i=y$ and $\mathbf{m}$ satisfies the restriction stated below equation \eqref{eq:type}.

Let $\mathcal{T}_0(y)$ and $\mathcal{T}_Q(y)$ be index sets which contain the exact type of all  summands (counted with multiplicity) of class $y$ in \eqref{eq:FI0} and \eqref{eq:FQ}, respectively. 
As mentioned before, we must have
\begin{equation*}
\sum_{y=r-R(I)}^{\length(S(I))} |\mathcal{T}_0(y)|=n^{g(I_0)} \quad \text{ and } \quad \sum_{y=r-R(I)}^{\length(S(I))} |\mathcal{T}_Q(y)|=n^{g(I_0)-d(I)}\,.
\end{equation*}
With this notation we can write
\beam\label{eq:rewrfi0}
2 \, F(I_0)&=&2\, \sum_{y=r-R(I)}^{\length(S(I))} \sum_{(\mathbf{s},\mathbf{m})\in \mathcal{T}_0(y)} f_{\mathbf{s},\mathbf{m}}\,,\\\label{eq:rewrfq}
n^{d(I)}\, F_Q(S(I))&=&n^{d(I)}\, \sum_{y=r-R(I)}^{\length(S(I))} \sum_{(\mathbf{s},\mathbf{m})\in \mathcal{T}_Q(y)} f_{\mathbf{s},\mathbf{m}}\,.
\eeam
We show \eqref{eq:assmomentnow} by a {\em matching argument}. 
We start by matching summands in class $\length(S(I))$. From \eqref{eq:type11} we see that elements of class $\length(S(I))$ are necessarily of the form 
\begin{equation*}
\prod_{i=1}^{r-R(I)} \E\big[ Y_{1}^{2}\cdots Y_{N_i}^{2} \big]\,, 
\end{equation*}
in other words they are all equal. Note that 
\begin{equation*}
\begin{split}
|\mathcal{T}_0(\length(S(I)))|&= n (n-1) \cdots (n-g(I_0)+1)\,,\\
|\mathcal{T}_Q(\length(S(I)))|&= n (n-1) \cdots (n-g(I_0)+d(I)+1)\,.
\end{split}
\end{equation*}
Therefore, we have 
\begin{equation*}
n^{d(I)}\, \sum_{(\mathbf{s},\mathbf{m})\in \mathcal{T}_Q(\length(S(I)))} f_{\mathbf{s},\mathbf{m}} \le 2\, \sum_{(\mathbf{s},\mathbf{m})\in \mathcal{T}_0(\length(S(I)))} f_{\mathbf{s},\mathbf{m}}\,.
\end{equation*}
This shows that for each summand in class $\length(S(I))$ on the \lhs~ of \eqref{eq:assmomentnow} we can find at least one summand of the same type on the  \rhs~ of \eqref{eq:assmomentnow}.

Since a large number of summands of the class $\length(S(I))$ have not been used for matching of summands from the same class, we can use \eqref{eq:assumptionq} to match them with summands of classes $\length(S(I))-1, \length(S(I))-2, \ldots, \max(\length(S(I))-d(I),r-R(I))$. 

Applying \eqref{eq:assumptionq} to a summand of class $\length(S(I))-1$ we obtain that it is bounded by $q$ times a summand in class $\length(S(I))$. Hence, we can perform the matching in \eqref{eq:assmomentnow} also between different classes. Clearly, for $y\in\{ r-R(I)+1,\ldots,\length(S(I))\}$ the index set $\mathcal{T}_0(y)$ is much larger than $\mathcal{T}_0(y-1)$. In fact, we have $|\mathcal{T}_0(y)|=n\; c\;|\mathcal{T}_0(y-1)|$ for some constant $c>1$. Note that $y\mapsto |\mathcal{T}_Q(y)|$ is not a strictly increasing function since some $\mathcal{T}_Q(y)$ can be empty. 

The matching is performed as follows: first match the class $\length(S(I))$ summands on the \lhs~ of \eqref{eq:assmomentnow}. Then match the class $\length(S(I))-1$ summands on the \lhs~ of \eqref{eq:assmomentnow} with the remaining class $\length(S(I))$ summands on the \rhs~which have not been used for the matching yet. 

Let $r-R(I)\le u\le \length(S(I))$. The general strategy is to match class $u$ summands on the \lhs~ with class $u,\ldots,\min(u+d(I),\length(S(I)))$ summands on the \rhs. During the matching one tries to use the (still available) class $\min(u+d(I),\length(S(I)))$ summands on the \rhs~ first, then turns to class $\min(u+d(I),\length(S(I)))-1$, and so forth. Whenever a matching between different classes is performed an application of \eqref{eq:assumptionq} is necessary to ensure that the expression on the \lhs~ is bounded by whatever we have matched it with on the \rhs. This leads to powers of $q$ and since $q^{d(I)}$ is the highest possible power we have explained the factor $q^{d(I)}$ in  \eqref{eq:assmomentnow}.
\par
Note that the factor $2$ in  \eqref{eq:assmomentnow} is there to guarantee 
\begin{equation*}
|\mathcal{T}_Q(\length(S(I)))|< 2\, n^{-d(I)} \, |\mathcal{T}_0(\length(S(I)))|\,
\end{equation*}
for sufficiently large $n$, but it is of no central importance. 

The last step in the procedure is the matching of the summands with the highest possible powers on the \lhs~of \eqref{eq:assmomentnow}, which appear when all $t$-indices are equal. They are elements of the class $r-R(I)$. We have
\begin{equation*}
|\mathcal{T}_0(r-R(I))|=|\mathcal{T}_Q(r-R(I))|=n\,,
\end{equation*}
which is a simple explanation why matching of \eqref{eq:rewrfi0} and \eqref{eq:rewrfq} with summands in the same class cannot work in general. Using \eqref{eq:assumptionq} $d(I)$ times, we can bound class $r-R(I)$ summands by class $r-R(I)+d(I)$ summands of which we originally have $|\mathcal{T}_0(r-R(I))|\approx n^{d(I)}|\mathcal{T}_0(r-R(I))|$, which explains the factor $n^{d(I)}$ in \eqref{eq:assmomentnow}. The general matching strategy applies and the proof of \eqref{eq:assmomentnow} is complete.
\end{proof}

\section{Proof of Theorem~\ref{thm:mu1convergence}}\label{sec:5.2}\setcounter{equation}{0}
The following proposition contains our main technical novelty. Its proof is given after the proof of Theorem~\ref{thm:mu1convergence}.
\begin{proposition}\label{thm:limsup}
Assume \eqref{Cpn} and that the iid symmetric field $(X_{it})$ satisfies  \eqref{eq:assumptionq}.
Then the following limit results hold for the largest and smallest eigenvalues $\mu_{(1)}$ and $\mu_{(p)}$ of $\bfR$:
\beam\label{eq:dndfnseg}
\limsup_{\nto} \mu_{(1)} &\le& (1+\sqrt{\gamma})^2 \, \quad \as\\\label{eq:dndfnseg1}
\liminf_{\nto} \mu_{(p)} &\ge &(1-\sqrt{\gamma})^2 \, \quad \as
\eeam
\end{proposition}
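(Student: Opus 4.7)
The plan is to establish both inequalities via the method of moments combined with the Path-Shortening Algorithm of Section~\ref{sec:5.3}. Throughout, fix an integer sequence $k=k_n\to\infty$ with $k/\log n\to\infty$ and $k^3q/n\to 0$ as guaranteed by \eqref{eq:assumptionq}.

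For \eqref{eq:dndfnseg}, I start from $\E[\mu_{(1)}^{k}]\le \E[\tr\bfR^{k}]$ and decompose the trace over canonical paths:
\[
\E[\tr\bfR^{k}] = \sum_{r=1}^{k}\sum_{u=0}^{k-r} p(p-1)\cdots(p-r+1)\sum_{I\in\mathcal{I}_{r,k}(u)} F_n(I).
\]
By Proposition~\ref{prop:paths}(1) the $u=0$ contribution is $\sum_r p(p-1)\cdots(p-r+1)\,n^{1-r}|\mathcal{I}_{r,k}(0)|$; using \eqref{lem:lemma3.4}, \eqref{Cpn} and \eqref{eq:momentsmp} this is asymptotic to $n\gamma\beta_{k}(\gamma)(1+o(1))\le n\gamma(1+\sqrt\gamma)^{2k}(1+o(1))$. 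The $u\ge 1$ contribution is controlled via Proposition~\ref{prop:paths}(2) combined with a polynomial-in-$k$ bound of the form $|\mathcal{I}_{r,k}(u)|\le (ck^{2})^{u}|\mathcal{I}_{r,k}(0)|$: its total size is at most the $u=0$ term times $\sum_{u\ge 1}(Ck^{3}q/n)^{u}=o(1)$. Therefore $\E[\tr\bfR^{k}]\le Cn(1+\sqrt\gamma)^{2k}$, and for every $\varepsilon>0$ Markov's inequality yields
\[
\sum_{n}\P\bigl(\mu_{(1)}>(1+\sqrt\gamma)^{2}+\varepsilon\bigr) \le \sum_{n} Cn\Bigl(\frac{(1+\sqrt\gamma)^{2}}{(1+\sqrt\gamma)^{2}+\varepsilon}\Bigr)^{k_n}<\infty,
\]
by $k_n/\log n\to\infty$; the Borel--Cantelli lemma delivers \eqref{eq:dndfnseg}.

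For \eqref{eq:dndfnseg1} I run the same machinery on the shifted matrix $\bfR-\alpha\bfI$ with $\alpha=1+\gamma=(a+b)/2$, the midpoint of the \MP support. Since the diagonal entries of $\bfR$ equal $1$, one has $(\bfR-\alpha\bfI)_{ii}=-\gamma$ and $(\bfR-\alpha\bfI)_{ij}=R_{ij}$ off the diagonal, and the eigenvalues of $(\bfR-\alpha\bfI)^{2k}$ are $(\mu_{(i)}-\alpha)^{2k}\ge 0$, so
\[
(\alpha-\mu_{(p)})^{2k}\le \tr(\bfR-\alpha\bfI)^{2k}.
\]
Expanding $\tr(\bfR-\alpha\bfI)^{2k}$ produces a path sum analogous to the one for $\tr\bfR^{k}$, the only novelty being extra ``diagonal'' steps weighted by $-\gamma$; the same application of Proposition~\ref{prop:paths} together with the identity $\sum_{j}\binom{2k}{j}(-\alpha)^{2k-j}\beta_{j}(\gamma)=\int(x-\alpha)^{2k}f_\gamma(x)\,dx\le(2\sqrt\gamma)^{2k}$, valid since $|x-\alpha|\le (b-a)/2=2\sqrt\gamma$ on $[a,b]$, gives $\E[\tr(\bfR-\alpha\bfI)^{2k}]\le Cp(2\sqrt\gamma)^{2k}$. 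Markov and Borel--Cantelli then imply $\alpha-\mu_{(p)}\le 2\sqrt\gamma+\varepsilon$ a.s.\ eventually, i.e.\ $\liminf_{n}\mu_{(p)}\ge 1+\gamma-2\sqrt\gamma=(1-\sqrt\gamma)^{2}$.

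The main technical obstacle is the quantitative control of the $d(I)\ge 1$ contributions: one needs a polynomial-in-$k$ bound on the ratio $|\mathcal{I}_{r,k}(u)|/|\mathcal{I}_{r,k}(0)|$ uniformly in $r$ and $u$, so that the penalty $(k^{3}q/n)^{u}$ from Proposition~\ref{prop:paths}(2) really decays geometrically in $u$. For the shifted-matrix analysis behind \eqref{eq:dndfnseg1} there is a further subtlety: either one does the path counting directly in presence of the diagonal shift factors $-\gamma$, or one justifies the binomial expansion with alternating signs using sufficiently uniform moment estimates on $\E[\tr\bfR^{j}]$ for $j\le 2k$. Both routes reduce to a careful accounting of canonical-path contributions building on Proposition~\ref{prop:paths}.
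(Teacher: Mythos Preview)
Your proposal is correct and follows essentially the same route as the paper. For \eqref{eq:dndfnseg} the paper does exactly what you outline: bounds $\E[\mu_{(1)}^k]$ by $\E[\tr\bfR^k]$, decomposes over canonical $r$-paths stratified by $d(I)=u$, uses Proposition~\ref{prop:paths} for $F(I)$, and proves the cardinality bound $|\mathcal{I}_{r,k}(u)|\le (k^2/2)^u|\mathcal{I}_{r,k}(0)|$ via switching permutations (this is equation~\eqref{eq:lsetseet}); Borel--Cantelli finishes. For \eqref{eq:dndfnseg1} the paper also shifts by $\alpha=1+\gamma$ and bounds $\E[\tr(\bfR-\alpha\bfI)^{2k}]$, taking the binomial-expansion route you mention (using that $\bfR$ and $\alpha\bfI$ commute).

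One remark on the ``further subtlety'' you flag. The alternating-sign binomial sum is delicate because the individual terms $\binom{2k}{j}\alpha^{2k-j}\beta_j(\gamma)$ are of size $(b+\alpha)^{2k}$, which is exponentially larger in $k$ than the target $(2\sqrt\gamma)^{2k}$; a naive multiplicative error $O(k^3q/n)$ on each $\E[\tr\bfR^j]$ would therefore swamp the main term. The paper handles this by proving a precise combinatorial identity (Lemma~\ref{lem:fkgamma} in the Appendix) for the function
\[
f_k(\gamma)=1+\sum_{i=1}^{2k}\binom{2k}{i}\Big(\frac{-1}{1+\gamma}\Big)^i\beta_i(\gamma),
\]
showing $f_k(\gamma)\le (4\gamma)^k/(1+\gamma)^{2k-1}$; this is exactly your integral bound $\int (x-\alpha)^{2k}f_\gamma(x)\,dx\le(2\sqrt\gamma)^{2k}$, proved combinatorially. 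The error is then controlled by applying the path-counting bounds from the proof of \eqref{eq:dndfnseg} uniformly to each $\E[\tr\bfR^i]$, $i\le 2k$, yielding two-sided estimates \eqref{eq:gsegkasdf1}--\eqref{eq:gsegkasdf2} and a multiplicative correction $(1+O(k^3q/n))$ to the full alternating sum.
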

\begin{proof}[Proof of Theorem~\ref{thm:mu1convergence}]
(1) If $\E[X^4]<\infty$, \eqref{eq:mlmgd} and \eqref{eq:limitmup} hold for any mean zero distribution as seen in \eqref{eq:drtgdrghdr1}.
\par
\noindent
(2) Now assume \eqref{eq:assumptionq}.
The convergence of $F_{\bfR}$ to a deterministic distribution 
supported on a compact interval implies that the number of the
eigenvalues outside this interval is $o(p)$. Since the right and left endpoints of the \MP law are 
$(1+\sqrt{\gamma})^2$ and $(1-\sqrt{\gamma})^2$, respectively, we conclude from Theorem~\ref{thm:mpcorrelation}(1) that 
\begin{equation*}
\liminf_{n\to \infty}\mu_{(1)} \ge (1+\sqrt{\gamma})^2\quad \as \quad\mbox{and}\quad
\limsup_{n\to \infty}\mu_{(p)} \le (1-\sqrt{\gamma})^2\quad \as \,;
\end{equation*}
see \cite{bai:silverstein:2010} for details. Together with Proposition~\ref{thm:limsup} this completes the proof.
\end{proof}

\subsection*{Proof of equation \eqref{eq:dndfnseg} in Proposition~\ref{thm:limsup}}\label{sec:5.4}

We follow \cite{geman}. By Borel--Cantelli, \eqref{eq:dndfnseg} holds if 
\begin{equation}\label{eq:showsumexp1}
\sum_{n=1}^\infty \E\Big[\Big(\frac{\mu_{(1)}}{z}\Big)^k  \Big] <\infty\,,
\end{equation}
where $z>(1+\sqrt{\gamma})^2$ and $k=k_n\to \infty$. We choose $k$ such that $k/\log n \to \infty$ and $(k^3 q)/n \to 0$, which exists by condition \eqref{eq:assumptionq}. Our goal is to show \eqref{eq:showsumexp1}.
We use that $\E[\mu_{(1)}^k] \le \E[\tr(\bfR)^k]$ and 
\begin{equation*}
\begin{split}
\E[\tr(\bfR)^k]&= \sum_{i_1,\ldots,i_k=1}^p \sum_{t_1,\ldots,t_k=1}^n \E[ Y_{i_1t_k} Y_{i_1t_1} Y_{i_2t_1}Y_{i_2t_2}Y_{i_3t_2} Y_{i_3t_3} \cdots Y_{i_kt_{k-1}} Y_{i_kt_k}  ]\\
&= \sum_{i_1,\ldots,i_k=1}^p F(i_1,\ldots,i_k)\, .
\end{split}
\end{equation*}
We rewrite $\E[\tr(\bfR)^k]$ by sorting according to the number of distinct components in the path $(i_1, \ldots, i_k)$. Any $r$-path of length $k$ is an element in the disjoint union $\mathcal{J}_{r,k}(0) \cup \cdots \cup \mathcal{J}_{r,k}(k-r)$, where $\mathcal{J}_{r,k}(u)$ is the set of all $r$-paths $I$ of length $k$ with $d(I)=u$; see \eqref{eq:defdr} for the definition of $d(I)$. Hence we have
\begin{equation}\label{eq:disjointkds}
\{1,\ldots, p\}^k = \bigcup_{r=1}^k \bigcup_{u=0}^{k-r} \mathcal{J}_{r,k}(u)\,.
\end{equation}

Given a path $I\in \mathcal{J}_{r,k}(u)$ we can look at the positions where the $r$ distinct components appear for the first time. There are $r$ such positions. The first such position is always $1$, in general $i_1$ can take $p$ different values. For the second such position there are $(p-1)$ possibilities; the original $p$ minus the one from the first position. In total there are $p(p-1)\cdots (p-r+1)$ ways to assign values to these $r$ positions. For this reason
\begin{equation}\label{eq:slktej}
|\mathcal{J}_{r,k}(u)| = p(p-1)\cdots (p-r+1) |\mathcal{I}_{r,k}(u)|\,,
\end{equation}
where $\mathcal{I}_{r,k}(u)$ is the set of all canonical $r$-paths $I$ of length $k$ with $d(I)=u$. The only difference between the definitions of $\mathcal{J}_{r,k}(u)$ and $\mathcal{I}_{r,k}(u)$ is that the elements of the latter are canonical. 
Note that $\mathcal{I}_{k,k}(u)=\emptyset$ for all $u\ge 1$.
\par
In view of \eqref{eq:disjointkds} and \eqref{eq:slktej} we obtain
\beam\label{eq:oklj}
\E[\tr(\bfR)^k]&=& \sum_{r=1}^k \sum_{u=0}^{k-r} \sum_{I\in \mathcal{J}_{r,k}(u)} F(I)\nonumber\\
&=& \sum_{r=1}^k p(p-1)\cdots (p-r+1) \sum_{u=0}^{k-r} \sum_{I\in \mathcal{I}_{r,k}(u)} F(I)\label{eq:xx}\\
&\le& \sum_{r=1}^k p^r  \sum_{I\in \mathcal{I}_{r,k}(0)} F(I)
+\sum_{r=1}^{k-1} p^r \sum_{u=1}^{k-r} |\mathcal{I}_{r,k}(u)| \max_{I\in \mathcal{I}_{r,k}(u)}F(I)=:S_1+S_2\,.\nonumber
\eeam
By Proposition~\ref{prop:paths}, \eqref{lem:lemma3.4} and since $|\mathcal{I}_{r,k}(0)|\le \binom{k-1}{r-1}^2$, we have
\begin{equation}\label{eq:sumS1}
S_1 \le \sum_{r=1}^k p^r \binom{k-1}{r-1}^2 n^{1-r} = p \sum_{r=1}^k \binom{k-1}{r-1}^2 \Big(\frac{p}{n}\Big)^{r-1}\,.
\end{equation}

Next we bound $S_2$. Consider $1\le u\le k-r$. We will see how elements of $\mathcal{I}_{r,k}(u)$ can be constructed by modifying elements of $\mathcal{I}_{r,k}(0)$. Let $I\in \mathcal{I}_{r,k}(u)(N_1, \ldots, N_r)$ be the subset of $\mathcal{I}_{r,k}(u)$ for whose elements the integer $i$ appears exactly $N_i$ times as a component. Here $N_i, i=1,\ldots,r$ are positive integers satisfying $N_1+\cdots+N_r=k$. Obviously it is possible to obtain $I$ by permuting the components of any $I_0 \in \mathcal{I}_{r,k}(0)(N_1, \ldots, N_r)$. Consider the following permutation of $I_0$: two components of $I_0$ exchange places, all others remain untouched. We denote such a {\em switching permutation} by $SP$. The number of such permutations is bounded by $k^2/2$. Indeed, the first component can switch places with the remaining $k-1$ components, the second with $k-2$ components, etc. In total there are 
\begin{equation*}
(k-1)+(k-2)+ \cdots+ 1 = \sum_{j=1}^{k-1} j = \frac{(k-1)k}{2} \le \frac{k^2}{2}
\end{equation*}
ways how two components can switch positions. 

Let $u=1$. For any $I\in \mathcal{I}_{r,k}(u)(N_1, \ldots, N_r)$ there exists an $I_0 \in \mathcal{I}_{r,k}(0)(N_1, \ldots, N_r)$ and a switching permutation $SP$ such that $I= SP(I_0)$. Here $SP$ and $I_0$ are in general not unique. This is a consequence of the proof of Lemma~\ref{lem:9.5}. This implies
\beao
|\mathcal{I}_{r,k}(u)| \le |\mathcal{I}_{r,k}(0)| \frac{k^2}{2}\,.
\eeao
Similarly, for $1\le u\le k-r$ and $I\in \mathcal{I}_{r,k}(u)(N_1, \ldots, N_r)$ there exists an $I_0 \in \mathcal{I}_{r,k}(0)(N_1, \ldots, N_r)$ and switching permutations $SP_1,\ldots, SP_u$ such that
$
I= SP_1 \circ \cdots \circ SP_u (I_0)\,,
$
which shows
\begin{equation}\label{eq:lsetseet}
|\mathcal{I}_{r,k}(u)| \le |\mathcal{I}_{r,k}(0)| \Big(\frac{k^2}{2}\Big)^u\,.
\end{equation}
\par
Now we are ready to bound $S_2$. From Proposition~\ref{prop:paths} we get 
\begin{equation*}
\max_{I\in \mathcal{I}_{r,k}(u)}F(I) \le 2 n^{1-r-u}(2k)^u q^u 
\end{equation*}
and therefore,
\begin{equation*}
\begin{split}
S_2&\le \sum_{r=1}^{k-1} p^r \sum_{u=1}^{k-r} \binom{k-1}{r-1}^2 \Big(\frac{k^2}{2}\Big)^u 2 n^{1-r-u}(2k)^u q^u\\
&= p \sum_{r=1}^{k-1} \binom{k-1}{r-1}^2 \Big(\frac{p}{n}\Big)^{r-1} 2 \sum_{u=1}^{k-r} \Big( \frac{k^3 q}{n} \Big)^u\\
&\le p \sum_{r=1}^{k-1} \binom{k-1}{r-1}^2 \Big(\frac{p}{n}\Big)^{r-1} 2 \Big[\Big(1- \frac{k^3 q}{n} \Big)^{-1}-1\Big]\,.
\end{split}
\end{equation*}
Finally, we have the bound
\begin{equation*}
\begin{split}
\E[\tr(\bfR)^k]&\le S_1+S_2\le p \sum_{r=1}^k \binom{k-1}{r-1}^2 \Big(\frac{p}{n}\Big)^{r-1} \Big( 1+ 2 \Big[\Big(1- \frac{k^3 q}{n} \Big)^{-1}-1\Big] \1_{\{r<k\}} \Big)\\
&\le p \sum_{r=1}^k \binom{2k-2}{2r-2} \Big(\frac{p}{n}\Big)^{r-1} \Big( 1+ 2 \Big[\Big(1- \frac{k^3 q}{n} \Big)^{-1}-1\Big]\1_{\{r<k\}}\Big)\\
&\le p\sum_{r=0}^{2k-2} \binom{2k-2}{r} \Big(\sqrt{\frac{p}{n}}\Big)^{r} \Big( 2 \Big(1- \frac{k^3 q}{n} \Big)^{-1}-1 \Big)^{2k-2-r}\\
&= \Big[p^{1/(k-1)} \Big( 2 \Big(1- \frac{k^3 q}{n} \Big)^{-1}-1 +\sqrt{\frac{p}{n}}\Big)^2\Big]^{k-1} \le \eta^k\,,
\end{split}
\end{equation*}
where $\eta$ is a constant satisfying $(1+\sqrt{\gamma})^2<\eta<z$. The last inequality follows from $p^{1/(k-1)} \to 1$ and
\begin{equation*}
\lim_{\nto} \Big( 2 \Big(1- \frac{k^3 q}{n} \Big)^{-1}-1 +\sqrt{\frac{p}{n}}\Big)^2 =(1+\sqrt{\gamma})^2\,.
\end{equation*}
This shows \eqref{eq:showsumexp1} which concludes the proof.

%\begin{remark}{\em 
%Assume $X$ is symmetric. We have seen that the contribution of paths $I$ with $d(I)\ge 1$ is smaller. If we use the same bound $F(I)\le n^{1-r}$ for all canonical $r$-paths $I$, then we obtain the following bound  
%\begin{equation*}
%\E[\tr(\Y\Y')^k]\le p \sum_{r=1}^k c_{k,r} \,(p-1) \cdots (p-r+1) n^{1-r} \le p \sum_{r=1}^k c_{k,r} \, \Big(\frac{p}{n}\Big)^{r-1}\;,
%\end{equation*}
%where the coefficients $c_{k,r}$ count all the different possibilities of choosing indices $i_1, \ldots, i_k \in \{1,\ldots,r\}$ such $(i_1,\ldots,i_k)$ is a canonical $r$-path. This includes paths with duplicate pairs. By distinguishing whether $i_l \in \{i_1,\ldots, i_{l-1}\}$ it can be shown that
%\begin{equation*}
%c_{k,r}=\mathop{\sum_{t_1,\ldots,t_{k-1}=0}^1}_{t_1+ \cdots + t_{k-1}=r-1} \prod_{i:t_i=0} %\Big(1+\sum_{m=1}^{i-1} t_m\Big)\,.
%\end{equation*}
%}\end{remark}
%------------------------------------------------------------------------------------

%--------------------------------------------------------------------------------
\subsection{Proof of equation \eqref{eq:dndfnseg1} in Proposition~\ref{thm:limsup}}\label{sec:5.6}
We start with the following result.
\begin{proposition}\label{prop:unified}
Assume \eqref{Cpn}. If the iid entries $(X_{it})$ are symmetric and satisfy condition \eqref{eq:assumptionq} then
\beam\label{eq:jo1}
\limsup_{n\to \infty} \twonorm{\bfR-(1+\gamma) \bfI} \le 2 \sqrt{\gamma}\,\quad \as
\eeam
\end{proposition}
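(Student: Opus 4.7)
The plan is to bound $\twonorm{\bfM}$ almost surely, where $\bfM := \bfR - (1+\gamma)\bfI$. Since $\twonorm{\bfM} = \max_{1 \le i \le p}|\mu_{(i)} - (1+\gamma)|$, the target $\limsup_n \twonorm{\bfM} \le 2\sqrt{\gamma}$ a.s.\ is equivalent to the conjunction of $\limsup_n \mu_{(1)} \le (1+\sqrt{\gamma})^2$ and $\liminf_n \mu_{(p)} \ge (1-\sqrt{\gamma})^2$, so a single moment estimate on $\bfM$ will deliver both bounds at once. By the standard Markov/Borel--Cantelli argument applied to $\twonorm{\bfM}^{2k} \le \tr(\bfM^{2k})$, it suffices to show, for every $z > 2\sqrt{\gamma}$ and for an integer sequence $k = k_n$ supplied by \eqref{eq:assumptionq} (so $k/\log n \to \infty$ and $k^3 q/n \to 0$), that
$$\sum_n z^{-2k}\,\E\bigl[\tr(\bfM^{2k})\bigr] < \infty.$$

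To expand the trace, observe that $M_{ii} = R_{ii} - (1+\gamma) = -\gamma$ (since $R_{ii} = \sum_t Y_{it}^2 = 1$), whereas $M_{ij} = R_{ij}$ for $i \ne j$. Writing the trace as a sum over closed walks $I = (i_1, \ldots, i_{2k})$ on $\{1, \ldots, p\}$ (with cyclic convention $i_{2k+1} := i_1$) and classifying each $I$ by the set $D(I) := \{l : i_l = i_{l+1}\}$ of diagonal positions, I obtain
$$\E\bigl[\tr(\bfM^{2k})\bigr] = \sum_{I}\,(-\gamma)^{|D(I)|}\,\E\Big[\prod_{l \notin D(I)} R_{i_l i_{l+1}}\Big].$$
For fixed $I$, expanding each $R_{ij} = \sum_{t_l} Y_{i_l t_l} Y_{i_{l+1} t_l}$ converts the inner expectation into an $F$-function value as in Section~\ref{sec:5.3}, applied to the closed walk $I'$ of length $2k - |D(I)|$ obtained by contracting the diagonal steps.

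Next I would mimic the strategy from the proof of \eqref{eq:dndfnseg}: group contracted walks $I'$ by the number of distinct components $r$ and by the defect $d(I')$ from \eqref{eq:defdr}; bound each $F(I')$ via Proposition~\ref{prop:paths}; and count canonical representatives using \eqref{lem:lemma3.4} and \eqref{eq:lsetseet}. Summing over the choice of diagonal-position set $D$ (giving a binomial factor $\binom{2k}{|D|}$) and over the leading-order path types $d(I') = 0$, the resulting contribution is
$$p \sum_{j=0}^{2k} \binom{2k}{j} \bigl(-(1+\gamma)\bigr)^{2k-j}\beta_j(\gamma) \;=\; p \int_a^b \bigl(x - (1+\gamma)\bigr)^{2k} f_\gamma(x)\dint x \;\le\; p\,(2\sqrt{\gamma})^{2k},$$
the last inequality because $|x-(1+\gamma)| \le 2\sqrt{\gamma}$ on $[a,b] = [(1-\sqrt{\gamma})^2, (1+\sqrt{\gamma})^2]$. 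Subleading path types with $d(I') \ge 1$ each cost an extra factor of at most $k^3 q/n$ via Proposition~\ref{prop:paths} and sum geometrically to an $O(1)$ correction, exactly as for $S_2$ in the derivation of \eqref{eq:dndfnseg}.

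The main obstacle will be the combinatorial bookkeeping: verifying that the signed sum over subsets $D \subseteq \{1,\ldots,2k\}$, weighted by $(-\gamma)^{|D|}$, combines with the leading-order contracted-path contributions to reproduce the binomial expansion of $(x - (1+\gamma))^{2k}$ integrated against $f_\gamma$, moment by moment. Once this identification is secured, the bound $\E\bigl[\tr(\bfM^{2k})\bigr] \le p\,(2\sqrt{\gamma} + o(1))^{2k}$, combined with $p^{1/k} \to 1$ (from $k/\log n \to \infty$ and $p = O(n)$), produces summability over $n$ for any $z > 2\sqrt{\gamma}$, and Borel--Cantelli yields $\limsup_n \twonorm{\bfM} \le 2\sqrt{\gamma}$ almost surely.
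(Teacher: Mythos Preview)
Your overall strategy matches the paper's: bound $\E\bigl[\tr(\bfR-(1+\gamma)\bfI)^{2k}\bigr]$ and apply Borel--Cantelli with $k=k_n$ from \eqref{eq:assumptionq}. Your leading-term identity
\[
p\sum_{j=0}^{2k}\binom{2k}{j}\bigl(-(1+\gamma)\bigr)^{2k-j}\beta_j(\gamma)
= p\int_a^b (x-1-\gamma)^{2k} f_\gamma(x)\,\dint x
\le p\,(2\sqrt\gamma)^{2k}
\]
is correct and is in fact a more transparent route to the final bound than the paper's combinatorial Lemma~\ref{lem:fkgamma}, which establishes the equivalent inequality $f_k(\gamma)\le (4\gamma)^k/(1+\gamma)^{2k-1}$ via two auxiliary lemmas in the appendix. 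Where you take an unnecessary detour is the walk-by-walk expansion of $\tr\bfM^{2k}$ with diagonal weight $(-\gamma)^{|D(I)|}$: that identity is valid, but it does \emph{not} directly produce coefficients $(-(1+\gamma))^{2k-j}$; recovering them would require splitting $-\gamma = 1 - (1+\gamma)$ at every diagonal position and reabsorbing each ``$1$'' as a diagonal step $R_{ll}=1$, which is precisely the bookkeeping you flag as the main obstacle. The paper sidesteps this entirely by noting that $(1+\gamma)\bfI$ commutes with $\bfR$, so the matrix binomial theorem yields in one line
\[
\tr\bigl(\bfR-(1+\gamma)\bfI\bigr)^{2k}=\sum_{j=0}^{2k}\binom{2k}{j}\bigl(-(1+\gamma)\bigr)^{2k-j}\tr\bfR^j,
\]
after which one substitutes the two-sided estimate $\E[\tr\bfR^j]=p\,\beta_j(\gamma)\bigl(1+O(k^3 q/n)\bigr)$ already obtained in the proof of \eqref{eq:dndfnseg}. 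Switching to the matrix-level binomial expansion makes your ``main obstacle'' disappear.
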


\begin{proof}
The general idea is the same as in the proof of equation \eqref{eq:dndfnseg} in Proposition~\ref{thm:limsup}: we will 
bound the spectral norm of $\bfR-(1+\gamma) \bfI$ by the trace of high powers of this matrix and then take an appropriate root.
%We observe that $\twonorm{\bfR-(1+\gamma) \bfI}=\twonorm{(\bfR-(1+\gamma) \bfI)^2}^{1/2}$ where $(\bfR-(1+\gamma) \bfI)^2$ is positive semidefinite.
To this end we choose an integer \seq\ $k=k_n\to \infty$ such that $k/\log n \to \infty$ and $(k^3 q)/n \to 0$, which exists by condition \eqref{eq:assumptionq}. 
Since the matrices $\bfR$ and $(1+\gamma) \bfI$ commute we have
\begin{equation*}
(\bfR-(1+\gamma) \bfI)^{2k}= \sum_{i=0}^{2k} \binom{2k}{i} \bfR^i (-1)^i (1+\gamma)^{2k-i} \bfI\,.
\end{equation*}
By linearity of the trace,
\begin{equation}\label{eq:commutegh}
\E[ \tr(\bfR-(1+\gamma) \bfI)^{2k}]= p\,(1+\gamma)^{2k} \Big[1 + p^{-1} \sum_{i=1}^{2k} \binom{2k}{i}  \Big(\frac{-1}{1+\gamma}\Big)^i \E[\tr \bfR^i]\Big]\,.
\end{equation}
From \eqref{eq:oklj} combined with \eqref{lem:lemma3.4} we know that for $n$ sufficiently large
\begin{equation}\label{eq:gsegkasdf1}
\E[\tr \bfR^i]\ge p \sum_{r=1}^i \frac{(p-1)(p-2)\cdots (p-r+1)}{n^{r-1}} \frac{1}{r}\binom{i}{r-1}\binom{i-1}{r-1} =p\, \beta_i(\gamma)\,(1-\delta_n)\,,
\end{equation}
where $\delta_n=O(1/n)$. Additionally, we established
\begin{equation}\label{eq:gsegkasdf2}
\E[\tr \bfR^i]\le p\, \beta_i(\gamma)\,\Big(1+\frac{2k^3 q}{n}\Big) (1+\delta_n)\,.
\end{equation}
Hence, by \eqref{eq:commutegh}, Lemma~\ref{lem:fkgamma}, and noting that $f_k$ is continuous on $\R$, and $p/n\to \gamma\in(0,1]$, 
we have for $n$ sufficiently large,
\begin{equation*}
\begin{split}
\E[ \tr(\bfR-(1+\gamma) \bfI)^{2k}]%&= p(1+\gamma)^{2k} \Big[ 1 +p^{-1} \sum_{i=1}^{2k} \binom{2k}{i}  \Big(\frac{-1}{1+\gamma}\Big)^i \E[\tr \bfR^i] \Big]\\
& = p(1+\gamma)^{2k} \Big[ 1 +\sum_{i=1}^{2k} \binom{2k}{i}  \Big(\frac{-1}{1+\gamma}\Big)^i  \,\beta_i(\gamma)\Big]
%\sum_{r=1}^i \frac{1}{r} \binom{i}{r-1} \binom{i-1}{r-1} \gamma^{r-1}\Big]\\
\big(1+O(2k^3 q_n/n)\big)\\
&= p(1+\gamma)^{2k} f_k(\gamma) \big(1+O(2k^3 q_n/n)\big)\\
&\le p\,(1+\gamma) (4\gamma)^k \big(1+O(2k^3 q_n/n)\big) < z^{2k}\,,
\end{split}
\end{equation*}
for any $z>2\sqrt{\gamma}$. The last inequality follows from
\begin{equation*}
\lim_{\nto} p^{1/(2k)} \big(1+2k^3 q_n/n \big)^{1/(2k)}(1+\gamma)^{1/(2k)} 
= 1\,.
\end{equation*}
Using the same Borel-Cantelli argument as in the proof of \eqref{eq:dndfnseg}, one obtains the desired relation
\begin{equation*}
\limsup_{n\to \infty} \twonorm{\bfR-(1+\gamma) \bfI} \le  2 \sqrt{\gamma}\quad \as
\end{equation*}
\end{proof}
With Proposition~\ref{prop:unified} we can finish the proof of \eqref{eq:dndfnseg1}.
We have 
\begin{equation*}
\twonorm{\bfR-(1+\gamma) \bfI} = \max \{\mu_{(1)}- (1+\gamma), -\mu_{(p)}+(1+\gamma)\}\,.
\end{equation*}
From \eqref{eq:jo1} we conclude
%\begin{equation*}
%\limsup_{n\to \infty} \twonorm{\bfR-(1+\gamma) \bfI} \le 2 \sqrt{\gamma}\,,
%\end{equation*}
%then 
\begin{equation*}
\begin{split}
\limsup_{n\to \infty}\mu_{(1)} &\le \phantom{-} 2 \sqrt{\gamma}+1+\gamma=(1+\sqrt{\gamma})^2 \quad \as\,,\\
\liminf_{n\to \infty}\mu_{(p)} &\ge -2 \sqrt{\gamma}+1+\gamma=(1-\sqrt{\gamma})^2\quad \as
\end{split}
\end{equation*}

\section{Proof of Theorem~\ref{thm:mpcorrelation}}\label{sec:6}\setcounter{equation}{0}
\subsection{Proof of Theorem~\ref{thm:mpcorrelation}(1)}\label{sec:5.1}
We appeal to the proof of Theorem 2.3 in \cite{bai:zhou:2008}.
The following lemma is a version of Corollary 1.1 in \cite{bai:zhou:2008}.
\begin{lemma}\label{lem:baicorrected}
Let 
$\mathbf{B} =\mathbf{B}_n= (B_{jk})$ be a non-random $n\times n$ matrix with bounded norm
and 
\beao
\mathcal{S}=\{ (i_1,j_1,i_2,j_2): 1\le i_1,j_1,i_2,j_2\le n\}\backslash \{(i_1,j_1,i_2,j_2): i_1=i_2 , j_1=j_2 \text{ or } i_1=j_2\neq i_2=j_1  \}\,.
\eeao
%Assume \eqref{Cpn}. 
If $\E[Y^4 ] = o(n^{-1})$,
\beam\label{eq:con1}
n\,\var(Y_{1}Y_{2}) &\to& 0\,,\\
\label{eq:con2}
V_n=n^2\, \sum_{\mathcal{S}} \big(\cov(Y_{i_1}Y_{j_1},Y_{i_2}Y_{j_2})\big)^2 &\to& 0\,, 
\eeam
then Condition 1 of Theorem 1.1 in \cite{bai:zhou:2008} holds, i.e.,
\begin{equation*}
\E[ |\Y_{0} \mathbf{B} \Y_{0}'- \tr(\mathbf{B}\,\E[\bfY_0\bfY_0'])|^2] = o(1),
\end{equation*}
where $\Y_{0}= (Y_{1}, \ldots, Y_{n})$.
\end{lemma}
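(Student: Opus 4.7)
My plan is to write out the second moment of $W := \bfY_0 \bfB \bfY_0' - \tr(\bfB\,\E[\bfY_0'\bfY_0])$ explicitly as a quadruple sum and then control it by splitting the index set into three pieces matching the definition of $\mathcal{S}$. Writing $W = \sum_{i,j} B_{ij}(Y_i Y_j - \E[Y_i Y_j])$ gives
\begin{equation*}
\E[W^2] = \sum_{i_1,j_1,i_2,j_2} B_{i_1 j_1} B_{i_2 j_2}\, \cov(Y_{i_1} Y_{j_1}, Y_{i_2} Y_{j_2}).
\end{equation*}
I would then partition the index quadruples into three disjoint sets: (A) those with $i_1=i_2$ and $j_1=j_2$, (B) those with $i_1=j_2 \neq i_2=j_1$, and (C) the remaining ones, which by definition form $\mathcal{S}$. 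Both (A) and (B) contribute variance-type terms, while (C) is exactly what condition \eqref{eq:con2} is designed to handle.

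For part (A), I would further split according to whether $i=j$ or $i\neq j$ to obtain
\begin{equation*}
\sum_{i} B_{ii}^2\, \var(Y_i^2) + \sum_{i\neq j} B_{ij}^2\, \var(Y_i Y_j).
\end{equation*}
Using that the $Y_i$ are identically distributed, the two variances are $\var(Y_1^2) \le \E[Y_1^4] = o(n^{-1})$ and $\var(Y_1 Y_2) = o(n^{-1})$ by \eqref{eq:con1}. The key bookkeeping observation is that $\sum_{i,j} B_{ij}^2 = \|\bfB\|_F^2 \le n\|\bfB\|^2$, which together with bounded spectral norm yields a contribution of size $o(n^{-1}) \cdot O(n) = o(1)$. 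Part (B) is handled in the same way, since $\cov(Y_i Y_j, Y_j Y_i) = \var(Y_i Y_j)$ and $|B_{ij} B_{ji}| \le \tfrac12(B_{ij}^2 + B_{ji}^2)$, giving the same $o(1)$ bound.

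For part (C), I would apply Cauchy--Schwarz:
\begin{equation*}
\Big|\sum_{\mathcal{S}} B_{i_1 j_1} B_{i_2 j_2}\, \cov(Y_{i_1}Y_{j_1}, Y_{i_2}Y_{j_2})\Big| \le \Big(\sum_{\mathcal{S}} B_{i_1 j_1}^2 B_{i_2 j_2}^2\Big)^{1/2} \Big(\sum_{\mathcal{S}} \cov(Y_{i_1}Y_{j_1}, Y_{i_2}Y_{j_2})^2\Big)^{1/2}.
\end{equation*}
The first factor is at most $\|\bfB\|_F^2 \le n\|\bfB\|^2$ and the second factor equals $\sqrt{V_n}/n$, so their product is $\|\bfB\|^2 \sqrt{V_n} = o(1)$ by \eqref{eq:con2}. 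Combining the three pieces yields $\E[W^2] = o(1)$, which is the desired conclusion.

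I do not anticipate a genuine obstacle here; the three conditions of the lemma were tailored exactly to the three pieces of this partition, so the proof reduces to careful bookkeeping. The only mildly delicate point is checking that using $\|\bfB\|_F^2 \le n\|\bfB\|^2$ is enough, i.e., that the smallness of the variance factors in (A) and (B) and of $V_n$ in (C) can each absorb the factor of $n$ coming from the Frobenius-versus-spectral-norm estimate.
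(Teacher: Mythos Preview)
Your proposal is correct and follows essentially the same route as the paper: the paper expands $\E[W^2]$ as the same quadruple sum, bounds the complement of $\mathcal{S}$ by $c\,[n\var(Y_1^2)+n\var(Y_1Y_2)]$ exactly as you do in parts (A) and (B), and then controls the $\mathcal{S}$-sum by the bound $c\,n\big(\sum_{\mathcal{S}}\cov(\cdot)^2\big)^{1/2}$, which is precisely your Cauchy--Schwarz step (the paper cites \cite{bai:zhou:2008} for this last estimate rather than writing it out).
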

\begin{proof}
We have for some constant $c>0$, 
\beao
\lefteqn{\E[ |\Y_{0} \mathbf{B} \Y_{0}'- \tr(\mathbf{B}\E[\bfY_0\bfY_0'])|^2]}\\ &=& 
\E\Big[ \Big| \sum_{i_1,j_1=1}^n B_{i_1j_1} (Y_{i_1} Y_{j_1} - \E[Y_{i_1} Y_{j_1}]) \Big|^2\Big]\\
&=& \sum_{i_1,j_1=1}^n \sum_{i_2,j_2=1}^n B_{i_1j_1}B_{i_2j_2}
\cov (Y_{i_1} Y_{j_1},Y_{i_2} Y_{j_2})\\
&\le &c\, \big[n  \,\var(Y_{1}^2)
 + n \, \var(Y_{11} Y_{12})\big] 
+ \sum_{\mathcal{S}} B_{i_1j_1}B_{i_2j_2} \cov(Y_{i_1} Y_{j_1}, Y_{i_2} Y_{j_2})\,.
\eeao
By assumption,
$n  \,\var(Y^2)  = n \,(\E[Y^4]-n^{-2})\to 0$. The second summand converges to zero by \eqref{eq:con1}. 
It is shown in 
\cite{bai:zhou:2008} that the last summand is bounded by
\begin{equation*}
c\, n \Big(\sum_{\mathcal{S}} \big( \cov(Y_{i_1} Y_{j_1},Y_{1i_2} Y_{1j_2})\big)^2 \Big)^{1/2}
\end{equation*}
which converges to zero by \eqref{eq:con2}.
\end{proof}
\begin{remark}{\em Lemma~\ref{lem:baicorrected} corrects the proof of Theorem 2.3 and Corollary 1.1 in \cite{bai:zhou:2008}. 
In the latter paper
it is claimed that
\beao
V_n'=n^2\sum_{\mathcal{S'}} \big(\cov(Y_{i_1}Y_{j_1},Y_{i_2}Y_{j_2})\big)^2 \to 0\,,
\eeao 
where 
\beao
\mathcal{S'}=\{ (i_1,j_1,i_2,j_2): 1\le i_1,j_1,i_2,j_2\le n\}\backslash \{(i_1,j_1,i_2,j_2): i_1=i_2 \neq j_1=j_2 \text{ or } i_1=j_2\neq i_2=j_1  \}\,. 
\eeao
However, $\mathcal{S}'$ contains the quadruples $(i,i,i,i)$. Hence
\beao
V_n'&\ge&  n p^2 \big( \var(Y^2)  \big)^2= n^{-1} \,p^2\, ( n\,\E[Y^4 ] )^2 - 2\, \frac{p^2}{n^2}\, (n\, \E[Y^4 ]) + \frac{p^2}{n^3}\,,
\eeao
which does not necessarily converge to zero since 
$n\,\E[Y^4 ]$ may converge to zero arbitrarily slowly. }
\end{remark}
\par
Now we are ready for the proof of Theorem \ref{thm:mpcorrelation}(1).
If the \ds\ of $X$ is in the domain  of attraction of the normal law
the claim follows from Theorem 2.3 in \cite{bai:zhou:2008}, using our Lemma~\ref{lem:baicorrected}.
\par
Now assume the alternative condition \eqref{eq:condX}.
%By construction, the columns of $\Y$ are dependent. 
We will apply Theorem 2.2 in \cite{bai:zhou:2008} and our Lemma~\ref{lem:baicorrected}. 
Our goal is to find the limiting spectral distribution of $\bfR=\Y\Y'$ via the limit of the 
Stieltjes transform of $\Y'\Y$, using the fact that
$\Y\Y'$ and $\Y'\Y$ have the same non-zero eigenvalues. 
%The empirical spectral \ds\ of $\bfY'\bfY$ for $x\in \R$ satisfies the relation
%\begin{equation*}
%\frac{1}{n} \sum_{i=1}^n \1_{\{ \lambda_{(i)}(\Y'\Y)\le x\}}=\Big(1-\frac{p}{n}\Big) \1_{[0,\infty)} + \frac{p}{n} \frac{1}{p} \sum_{i=1}^p \1_{\{ \lambda_{(i)}(\Y\Y')\le x\}}\,.
%\end{equation*}
Since $\lambda_{(i)}=0$ for any of these matrices whenever $i>n\vee p$ 
we obtain a connection between the two spectral distributions:
\beao
F_{\Y'\Y}=\Big(1-\frac{p}{n}\Big) \1_{[0,\infty)} + \frac{p}{n} F_{\Y\Y'}\,.
\eeao
Hence 
\beam\label{eq:sgdsglop}
s_{\bfR}(z)&=& \int \frac{1}{x-z} \dint F_{\bfR}(x)\nonumber\\
&=& \int \frac{1}{x-z} \dint \Big(\frac{n}{p} F_{\Y'\Y} - \Big( \frac{n}{p}-1 \Big) \1_{[0,\infty)} \Big)(x)\nonumber\\
&=& \frac{n}{p} s_{\Y'\Y}(z)- \Big( \frac{n}{p}-1 \Big) \frac{1}{-z}\,,\quad z\in\C^+\,,
\eeam
where we used that for a constant $c\neq 0$ we have $s_{c\bfA}(z)=c^{-1} s_{\bfA}(cz)$.
\par
We introduce the $n\times n$ matrix $\bfT=(T_{ij})=(p\,\E[Y_{i}Y_{j}])$ which is a circulant matrix whose eigenvalues can be determined as $T_{11}+(n-1) T_{12}$ and $T_{11}-T_{12}$ where the latter appears with multiplicity $n-1$. By \cite{gine:goetze:mason:1997}, we have $T_{ij}=O(n^{-1})$ for $i\neq j$ and hence $\twonorm{\bfT}$ is bounded. The empirical spectral distribution 
\begin{equation*}
F_{\bfT}(x) = \frac{1}{n} \sum_{j=1}^n \1_{\{\lambda_j(\bfT)\le x\}}= \frac{\1_{\{T_{11}+(n-1) T_{12}\le x\}}}{n} +\frac{n-1}{n} \1_{\{T_{11}-T_{12} \le x\}}
\end{equation*}
converges to the degenerate distribution $H_\gamma$ with all mass at $\lim_{\nto} (T_{11}-T_{12})= \lim_{\nto}p/n =\gamma$.
\par
Next we verify the assumptions of Lemma~\ref{lem:baicorrected}.
We have 
\beao
n\, \E[\var(Y_1Y_2)]&=&  n\,\big(\E[(Y_1 Y_2)^2] - (\E[Y_{1}Y_{2}])^2 \big)\\
&\le& n \Big(\frac1 {n(n-1)} - o(n^{-2})\Big) \to 0\,, \qquad \nto\,.
\eeao
This implies \eqref{eq:con1}.
\par Now we turn to $V_n$ in \eqref{eq:con2}.
If we distinguish  between the types of indices in $\mathcal{S}$ we find that either possible structure 
for the summands $(Y_{i_1}Y_{j_1}-\E[Y_{i_1}Y_{j_1}])( Y_{i_2}Y_{j_2}-\E[Y_{i_2}Y_{j_2}])$ in $V_n$
is of the type $Y_{1}^3Y_{2}, Y_{1}^2Y_{2}Y_{3}$ or $Y_{1}Y_{2}Y_{3}Y_{4}$. Keeping this in mind, we conclude that for some constant $c>0$,
\beao
V_n&\le& c\,\Big(n^4\,\big( \cov(Y_1^2,Y_1Y_2)\big)^2 + n^5  \,\big( \cov(Y_1^2,Y_2Y_3)\big)^2
+ n^5 \, \big( \cov(Y_1Y_2,Y_2Y_3)\big)^2\\
&&+n^6\,\big(\cov(Y_1Y_2,Y_3Y_4)\big)^2\Big)\\ 
&\le & c \,\Big( n^4\,\big(\E[Y_{1}^3 Y_{2}] -(1/n) \E[Y_{1} Y_{2}] \big)^2 
+ n^5 \, \big( \E[Y_{1}^2 Y_{2}Y_{3}] -(1/n)  \E[Y_{1} Y_{2}] \big)^2\\
&&+ n^5\, \big(  \E[Y_{1} Y_{2}^2 Y_{3}] -  (\E[Y_{1} Y_{2}])^2 \big)^2+  n^6\,\big( \E[Y_{1} Y_{2}Y_{3}Y_{4}] -  
(\E[Y_{1} Y_{2}])^2 \big)^2\Big)\,.
\eeao
The \rhs\ converges to zero in view of assumption \eqref{eq:condX} and because (see \cite{gine:goetze:mason:1997}) 
\begin{equation*}
\E[Y_{1}^3 Y_{2}] \le \E[Y_{1} Y_{2}], \quad \E[Y_{1}^2 Y_{2}Y_{3}] = O(n^{-3})\quad \text{ and } \quad 
\E[Y_{1} Y_{2}Y_{3}Y_{4}] = O(n^{-4})\,.
\end{equation*}
Applications of Theorem 2.2 in \cite{bai:zhou:2008} and our Lemma~\ref{lem:baicorrected} yield
for $s=\lim_{\nto}s_{\bfY'\bfY}$, 
\begin{equation*}
\begin{split}
s(z)&= \int \frac{1}{\omega (1-\gamma^{-1}-\gamma^{-1}zs(z))-z} \dint H_\gamma(\omega)\\
&= \frac{1}{\gamma (1-\gamma^{-1}-\gamma^{-1}zs(z))-z}\,,
\end{split}
\end{equation*}
Thus $s=s(z)$ is the solution of the quadratic equation
\begin{equation*}
s^2 z + s (1+z-\gamma) +1=0.
\end{equation*}
By convention of \cite{bai:silverstein:2010}, the square root of 
a complex number is the one with a positive imaginary part. Hence
\beao
s(z)= \frac{-(\gamma^{-1}z+\gamma^{-1}-1)+\sqrt{(\gamma^{-1}z-\gamma^{-1}-1)^2-4\gamma^{-1}}}{2\gamma^{-1}z}.
\eeao
Writing $m$ for the limiting Stieltjes transform of $F_{\Y\Y'}$, we conclude from \eqref{eq:sgdsglop} and since $n/p\to \gamma^{-1}$ that
\beao
m(z)&=&\gamma^{-1} s(z)+\frac{\gamma^{-1}-1}{z}\\
%\frac{-(\gamma^{-1}z-\gamma^{-1}+1)+\sqrt{(\gamma^{-1}z-\gamma^{-1}-1)^2-4\gamma^{-1}}}{2z}\\
&=&\frac{1-\gamma -z +\sqrt{(1+\gamma-z)^2-4\gamma}}{2 \gamma z}\,,
\eeao
which we recognize as the Stieltjes transform of the \MP law in \eqref{eq:MP}; see \eqref{eq:stieltjestransform}. The proof is complete.

\subsection{Proof of Theorem~\ref{thm:mpcorrelation}(2)}\label{sec:5.5}
%\begin{proof}[Proof of Theorem~\ref{thm:mpcorrelation}(2)]
Assume $\liminf_{\nto} n\,\E [ Y^4] =\delta >0$.
For $k\ge 1$, the expected moments of the empirical spectral distribution $F_\bfR$ are 
%\begin{equation*}
%\beta_k(\bfR)=\int x^k \dint F_{\bfR}(x)= \frac{1}{p} \tr \bfR^k=\frac{1}{p} \tr(\Y\Y')^k\,.
%\end{equation*}
%Therefore, the expected moments are 
\begin{equation}\label{eq:expmomentsr}
\wt \beta_k = \E\Big[\int x^k \dint F_{\bfR}(x)\Big]=p^{-1}\E[\tr\,\bfR^k]=p^{-1}\sum_{i_1,\ldots,i_k=1}^p F(i_1,\ldots,i_k)\,.
\end{equation}
From \eqref{eq:xx} we know that 
\begin{equation*}
\begin{split}
p^{-1}\E[\tr(\bfR)^k]%&=\sum_{r=1}^k (p-1)(p-2)\cdots (p-r+1) \sum_{u=0}^{k-r} \sum_{I\in \mathcal{I}_{r,k}(u)} F(I)\\
&\ge \sum_{r=1}^k (p-1)(p-2)\cdots (p-r+1) \Big(\sum_{I\in \mathcal{I}_{r,k}(0)} +\sum_{I\in \mathcal{I}_{r,k}(1)}\Big)
F(I)  =: S_3+S_4\,.
\end{split}
\end{equation*}
By Proposition~\ref{prop:paths} and \eqref{lem:lemma3.4}, we have
\begin{equation}\label{eq:sumS11}
\lim_{\nto} S_3 = \sum_{r=1}^{k} \frac{1}{r} \binom{k}{r-1}\binom{k-1}{r-1}\gamma^{r-1}=\beta_k(\gamma)\,,
\end{equation}
which we recognize from \eqref{eq:momentsmp} as the $k$-th moment of the \MP law. 
\par
Next, observe that for $k\ge 4$ and $2\le r\le k-2$, $\mathcal{I}_{r,k}(1)$ contains the element 
\begin{equation*}
I_r = (1,2,1,2,\underbrace{2,\ldots,2}_{k-r-2},3,\ldots,r)\,.
\end{equation*}
One checks that $R(I_r)=r-2$ and $S(I_r)=(1,2,1,2)$; consult the PSA and Definition~\ref{def:pathshortening} for the definitions of $R(\cdot)$ and $S(\cdot)$. Moreover, by symmetry of $Y_{it}$
we have	
\begin{equation*}
\begin{split}
F(1,2,1,2)&=
\sum_{t_1,\ldots,t_4=1}^n \E[ Y_{1t_1}Y_{1t_2}Y_{1t_3}Y_{1t_4} 
Y_{2t_1}Y_{2t_2}Y_{2t_3}Y_{2t_4}]= \sum_{t_1,\ldots,t_4=1}^n (\E[ Y_{t_1}Y_{t_2}Y_{t_3}Y_{t_4} ])^2\\
&= \sum_{t_1=1}^n (\E[ Y_{t_1}^4])^2 + 3 \sum_{t_1\neq t_2=1}^n (\E[ Y_{t_1}^2Y_{t_2}^2 ])^2\ge \frac{1}{n} (n\E[ Y^4])^2\,.
\end{split}
\end{equation*}
%Since $X$ is regularly varying with index $\alpha<2$ it follows that $n\E\Big[ \frac{X_{11}^4}{D_{1}^2}\Big]$ has a positive limit, say $\delta$.
By Lemma~\ref{prop:PSI} we have 
\begin{equation*}
F(I_r)= n^{2-r} F(1,2,1,2)\ge n^{1-r}(n\E[ Y^4])^2
\end{equation*}and consequently
\begin{equation*}
\begin{split}
\liminf_{\nto} S_4&\ge \liminf_{\nto} \sum_{r=2}^{k-2} (p-1)(p-2)\cdots (p-r+1)  F(I_r)\\
&\ge \liminf_{\nto} \sum_{r=2}^{k-2} (p-1)(p-2)\cdots (p-r+1)  n^{1-r}(n\E[ Y^4])^2=  \delta^2 \sum_{r=2}^{k-2} \gamma^{r-1}\,.
\end{split}
\end{equation*}
This together with \eqref{eq:sumS11} proves $\liminf_{\nto} \wt \beta_k>\beta_k(\gamma)$, as desired.

%--------------------------------------------------------------------------------
\appendix
\section{}\setcounter{equation}{0}
In this section we provide some auxiliary tools for the proofs of the main results.
\begin{lemma}\label{lem:fkgammaneww}
Let $k\in \N$ and $1\le j\le k$. Then
\begin{equation}\label{eq:vdsvsg}
-\sum_{i=2j-1}^{2k} (-1)^i \binom{2k}{i} \binom{i-1}{2j-2}=1
\end{equation}
\end{lemma}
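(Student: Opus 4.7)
\textbf{Proof proposal for Lemma~\ref{lem:fkgammaneww}.} The plan is to interpret $\binom{i-1}{2j-2}$ as a polynomial in $i$ and invoke the standard fact that the alternating sum $\sum_{i=0}^{n}(-1)^i\binom{n}{i}p(i)$ vanishes for every polynomial $p$ of degree less than $n$. The key observation is that the integer-valued binomial coefficient $\binom{i-1}{2j-2}$ agrees with the polynomial
\begin{equation*}
P(i):=\frac{(i-1)(i-2)\cdots(i-2j+2)}{(2j-2)!},\qquad \deg P=2j-2,
\end{equation*}
for all integers $i\ge 1$, and both quantities vanish for $i=1,2,\ldots,2j-2$. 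The only discrepancy occurs at $i=0$, where the standard binomial coefficient $\binom{-1}{2j-2}$ equals $0$, whereas $P(0)=(-1)^{2j-2}(2j-2)!/(2j-2)!=1$.

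First I would rewrite the sum in \eqref{eq:vdsvsg} with the summation index starting at $i=0$ (using that $\binom{i-1}{2j-2}=0$ for $1\le i\le 2j-2$), obtaining
\begin{equation*}
-\sum_{i=2j-1}^{2k}(-1)^i\binom{2k}{i}\binom{i-1}{2j-2}
= -\sum_{i=0}^{2k}(-1)^i\binom{2k}{i}\binom{i-1}{2j-2}.
\end{equation*}
Next, replacing the integer binomial coefficient by the polynomial $P(i)$ introduces exactly one correction term, coming from $i=0$, namely $(-1)^0\binom{2k}{0}\cdot 1=1$. Hence
\begin{equation*}
\sum_{i=0}^{2k}(-1)^i\binom{2k}{i}P(i)
=\sum_{i=0}^{2k}(-1)^i\binom{2k}{i}\binom{i-1}{2j-2}+1.
\end{equation*}

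The hypothesis $j\le k$ ensures that $\deg P=2j-2\le 2k-2<2k$. Applying the finite-difference identity (equivalently, the fact that the $2k$-th forward difference annihilates any polynomial of degree less than $2k$, which one can verify directly by expanding $(1-e^t)^{2k}=\sum_{i=0}^{2k}(-1)^i\binom{2k}{i}e^{it}$ and reading off the coefficient of $t^m$ for $m<2k$), the left-hand side of the last display equals $0$. Therefore
\begin{equation*}
\sum_{i=0}^{2k}(-1)^i\binom{2k}{i}\binom{i-1}{2j-2}=-1,
\end{equation*}
which upon multiplication by $-1$ yields the claimed identity. The only genuine subtlety is keeping track of the convention mismatch at $i=0$; once that single boundary correction is isolated, everything else is immediate from the polynomial-annihilation property of the alternating binomial sum.
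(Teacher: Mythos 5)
Your proof is correct. Let me verify the delicate points: the polynomial $P(i)=\frac{(i-1)(i-2)\cdots(i-2j+2)}{(2j-2)!}$ indeed has degree $2j-2\le 2k-2<2k$, agrees with $\binom{i-1}{2j-2}$ for all integers $i\ge 1$ (both vanish for $1\le i\le 2j-2$), and $P(0)=\frac{(-1)^{2j-2}(2j-2)!}{(2j-2)!}=1$, so your single boundary correction of $+1$ at $i=0$ is exactly right; combined with the annihilation of degree-$(<2k)$ polynomials by the $2k$-th alternating binomial sum, the identity follows. (One small remark: whether $\binom{-1}{2j-2}$ ``equals $0$'' is a matter of convention --- under the generalized falling-factorial convention it would equal $(-1)^{2j-2}=1=P(0)$ --- but since that term never appears in the original sum, you are simply \emph{defining} the $i=0$ term of the extended sum to be $0$, and your bookkeeping is consistent with that choice, so nothing breaks.)

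Your route differs from the paper's in execution, though both rest on the same underlying mechanism. The paper rewrites the claim as $v^{(2j-2)}(1)=-(2j-2)!$ for $v(x)=\sum_{i=1}^{2k}\binom{2k}{i}(-1)^i x^{i-1}$, factors $v=uw$ with $u(x)=(x-1)^{2k}-1$ and $w(x)=1/x$, and applies Leibniz's rule; the zero of order $2k$ of $(x-1)^{2k}$ at $x=1$ kills all but one term, and the ``$-1$'' in $u$ produces the surviving contribution $-(2j-2)!$. In your argument the same zero of order $2k$ appears as the finite-difference annihilation of polynomials of degree $<2k$, and the role of the ``$-1$'' in $u$ is played by your explicit correction term at $i=0$. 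Your version is somewhat more elementary and combinatorially transparent (no product rule, no derivative sign bookkeeping for $w^{(m)}$); the paper's version avoids any discussion of binomial-coefficient conventions at $i=0$ by never extending the summation range. Both are complete proofs.
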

\begin{proof}
%If $j=k$, we have 
%\begin{equation*}
%-\sum_{i=2k-1}^{2k} (-1)^i \binom{2k}{i} \binom{i-1}{2k-2}=2k-(2k-1)=1\,.
%\end{equation*}
For $1\le j\le k$ we rewrite \eqref{eq:vdsvsg} as 
\begin{equation}\label{eq:vdsvsg2}
-\sum_{i=2j-1}^{2k} (-1)^i \binom{2k}{i} \frac{(i-1)!}{(i+1-2j)!}=(2j-2)!\,.
\end{equation}
We define the functions
\begin{equation*}
u(x)=(x-1)^{2k}-1\,,\quad v(x)=\sum_{i=1}^{2k}  \binom{2k}{i}(-1)^i x^{i-1}\,,\quad \text{and } \quad w(x)=\frac{1}{x}\,.
\end{equation*}
Then $v(x)=u(x)w(x)$ and since
\begin{equation*}
\frac{(i-1)!}{(i+1-2j)!}=(i-1)(i-2)\cdots (i-2j+2)\,,
\end{equation*}
equation \eqref{eq:vdsvsg2} is equivalent to an equation for the $(2j-2)$-th derivative of $v$ evaluated at $1$,
\begin{equation*}
v^{(2j-2)}(1)=-(2j-2)!\,.
\end{equation*}
By Leibniz's rule for differentiation, one gets
\begin{equation*}
v^{(2j-2)}(x)= (uw)^{(2j-2)}(x)= \sum_{\ell=0}^{2j-2} \binom{2j-2}{\ell} u^{(\ell)}(x) w^{(2j-2-\ell)}(x)\,.
\end{equation*}
Observe that $u^{(0)}(1)=-1$ and $u^{(\ell)}(1)=0$ for $1\le \ell \le 2j-2$. Furthermore, $w^{(2j-2-\ell)}(1) = (2j-2-\ell)!$. Hence, we conclude
\begin{equation*}
v^{(2j-2)}(1)= \sum_{\ell=0}^{2j-2} \binom{2j-2}{\ell} u^{(\ell)}(1) w^{(2j-2-\ell)}(1)=- (2j-2)!\,,
\end{equation*}
completing the proof.
\end{proof}
\par
For $k\in \N$ and $x\in [0,1]$, define the function
\begin{equation}\label{eq:fkgamma}
f_k(x)=1+\sum_{i=1}^{2k} \binom{2k}{i} \Big( \frac{-1}{1+x}\Big)^i \sum_{r=1}^i \frac{1}{r} \binom{i}{r-1} \binom{i-1}{r-1} x^{r-1}\,.
\end{equation}
The following is our key lemma.

\begin{lemma}\label{lem:fkgamma}
We have for $k\in \N$ and $x\in [0,1]$
\begin{equation*}
f_k(x) = 1 - \sum_{j=1}^k \frac{x^{j-1}}{(1+x)^{2j-1}} \frac{(2j-2)!}{j! (j-1)!} \le \frac{ (4x)^k}{(1+x)^{2k-1}}\,.
\end{equation*}
\end{lemma}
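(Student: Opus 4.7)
The plan is to recast the definition \eqref{eq:fkgamma} of $f_k(x)$ as a single integral against the \MP density $f_x$ from \eqref{eq:MP}. Using that $\beta_i(x)=\int_a^b t^i f_x(t)\,dt$ is the $i$-th \MP moment (see \eqref{eq:momentsmp}) and that $\int_a^b f_x(t)\,dt=1$ for $x\in(0,1]$, absorbing the leading $1$ into the $i=0$ term of the sum and applying the binomial theorem yields
\begin{equation*}
f_k(x)=\int_a^b\sum_{i=0}^{2k}\binom{2k}{i}\Big(\tfrac{-t}{1+x}\Big)^i f_x(t)\,dt=\int_a^b\Big(\tfrac{1+x-t}{1+x}\Big)^{2k}f_x(t)\,dt,
\end{equation*}
with the degenerate case $x=0$ handled directly via the convention $0^0=1$ and $\beta_i(0)=1$.

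From this representation the upper bound is immediate: on $[a,b]=[(1-\sqrt x)^2,(1+\sqrt x)^2]$ the identity $(b-t)(t-a)=4x-(1+x-t)^2$ gives $|1+x-t|\le 2\sqrt x$ pointwise, hence
\begin{equation*}
f_k(x)\le\frac{(2\sqrt x)^{2k}}{(1+x)^{2k}}\int_a^b f_x(t)\,dt=\frac{(4x)^k}{(1+x)^{2k}}\le\frac{(4x)^k}{(1+x)^{2k-1}}
\end{equation*}
since $1+x\ge 1$ on $[0,1]$.

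For the equality, I would substitute $u=1+x-t$, so that $u$ ranges over $[-2\sqrt x,2\sqrt x]$ and the integrand becomes $u^{2k}/(1+x)^{2k}$ times the rescaled semicircle weight $\sqrt{4x-u^2}/(2\pi x(1+x-u))$. Polynomial division gives
\begin{equation*}
\frac{u^{2k}}{1+x-u}=-\sum_{j=0}^{2k-1}(1+x)^{2k-1-j}u^j+\frac{(1+x)^{2k}}{1+x-u},
\end{equation*}
so that the final term reproduces $(1+x)^{2k}\int_a^b f_x(t)\,dt=(1+x)^{2k}$, while the polynomial part is evaluated via the Wigner semicircle moments
\begin{equation*}
\frac{1}{2\pi x}\int_{-2\sqrt x}^{2\sqrt x}u^j\sqrt{4x-u^2}\,du=\begin{cases}x^n C_n,&j=2n,\\ 0,&j\text{ odd},\end{cases}
\end{equation*}
with $C_n=\frac{(2n)!}{n!(n+1)!}$ the $n$-th Catalan number (obtained by rescaling $u=2\sqrt x\,v$ onto the standard semicircle on $[-1,1]$). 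Dividing through by $(1+x)^{2k}$ and reindexing by $j=n+1$ recovers $1-\sum_{j=1}^k x^{j-1}C_{j-1}/(1+x)^{2j-1}$, which matches the stated form through the identity $C_{j-1}=(2j-2)!/(j!(j-1)!)$.

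The main obstacle is careful bookkeeping of the $(1+x)$ factors through the polynomial division and matching the Catalan-number normalisation, but the inequality itself is essentially free once the integral representation is in hand. Note that Lemma~\ref{lem:fkgammaneww} is not actually required for this route, although the identity \eqref{eq:vdsvsg} could in principle yield a purely combinatorial derivation by swapping the order of summation in \eqref{eq:fkgamma} and reducing inner sums in $i$ to binomial sums of the form $\binom{i-1}{2j-2}$.
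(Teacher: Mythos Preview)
Your proof is correct and takes a genuinely different route from the paper's. The paper proceeds purely combinatorially: for the equality it rewrites the inner sum via a formula from \cite{bai:silverstein:2010}, swaps the order of summation, and invokes Lemma~\ref{lem:fkgammaneww} to collapse the inner $i$-sum; for the inequality it introduces $g_j(x)=(1+x)^{2j-1}x^{-j}f_j(x)$, proves the recursion $g_j(x)=\big((1+x)^2 g_{j-1}(x)-g_{j-1}(0)\big)/x$, and bounds $g_k\le 4^k$ by induction.

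Your approach is analytic: recognising the inner sum as the \MP moment $\beta_i(x)$ turns $f_k(x)$ into $\int_a^b\big((1+x-t)/(1+x)\big)^{2k}f_x(t)\,dt$, after which the inequality is immediate from $(b-t)(t-a)=4x-(1+x-t)^2\ge 0$ (and in fact yields the sharper bound $(4x)^k/(1+x)^{2k}$), and the equality drops out of the semicircle-moment computation after the shift $u=1+x-t$ and polynomial division. This route is shorter, bypasses Lemma~\ref{lem:fkgammaneww} entirely, and makes transparent why Catalan numbers appear: they are the even moments of the semicircle law hiding inside the \MP density. The paper's recursive argument, on the other hand, is self-contained and does not rely on the integral representation of $\beta_i(x)$, but it requires the separate combinatorial identity of Lemma~\ref{lem:fkgammaneww} and a somewhat ad hoc induction.
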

\begin{proof}
%Let $x\in [0,1]$ and $k\in \N$. First, we find $f_1(x)=x/(1+x)$. 
%With long and tedious calculations one can show that 
%\begin{equation}\label{eq:sejfo}
%f_{j-1}(x)-f_j(x) =\frac{x^{j-1}}{(1+x)^{2j-1}} \frac{4^{k-1} \Gamma(j-1/2)}{\sqrt{\pi}\Gamma(j+1)} = \frac{x^{j-1}}{(1+x)^{2j-1}} \frac{(2j-2)!}{j! (j-1)!}\,, \quad j\ge 2\,,
%\end{equation}
%where we used the following identity for the Gamma function $\Gamma(\cdot)$,
%\begin{equation*}
%\Gamma(j-1/2)= \frac{(2j-2)! \sqrt{\pi}}{4^{j-1}(j-1)!}\,.
%\end{equation*}
%By a telescoping sum argument and \eqref{eq:sejfo} we get 
%\begin{equation*}
%\begin{split}
%f_k(x)&= f_1(x)- \sum_{j=2}^k (f_{j-1}(x)-f_j(x))\\
%&= \frac{x}{1+x} - \sum_{j=2}^k \frac{x^{j-1}}{(1+x)^{2j-1}} \frac{(2j-2)!}{j! (j-1)!}\,.
%\end{split}
%\end{equation*}
From \cite[page 41]{bai:silverstein:2010} we know that 
\begin{equation*}
\sum_{r=1}^i \frac{1}{r} \binom{i}{r-1} \binom{i-1}{r-1} x^{r-1}
= \sum_{r=0}^{\lfloor(i-1)/2 \rfloor} x^r (1+x)^{i-1-2r} \frac{(i-1)!}{(i-1-2r)! r! (r+1)!}\,.
\end{equation*}
Changing the order of summation one obtains
\begin{equation*}
\begin{split}
f_k(x)-1&= \sum_{r=0}^{k-1} \sum_{i=2r+1}^{2k} \binom{2k}{i} (-1)^i x^r (1+x)^{-1-2r}\frac{(i-1)!}{(i-1-2r)! r! (r+1)!}\\
&= \sum_{j=1}^k \frac{x^{j-1}}{(1+x)^{2j-1}} \frac{1}{j! (j-1)!} \sum_{i=2j-1}^{2k} \binom{2k}{i} (-1)^i \frac{(i-1)!}{(i+1-2j)!}\\
&= -\sum_{j=1}^k \frac{x^{j-1}}{(1+x)^{2j-1}} \frac{(2j-2)!}{j! (j-1)!}\,,
\end{split}
\end{equation*}
where the last equality followed from Lemma~\ref{lem:fkgammaneww} and its equivalent formulation \eqref{eq:vdsvsg2}.

For $j \in \N$ define
\begin{equation}\label{eq:gkgamma}
g_j(x)= \frac{(1+x)^{2j-1}}{x^j} f_j(x)\,.
\end{equation}
We have $g_1(x)=1$ and $g_2(x)=2+x$. A straightforward induction proves the recursion
\begin{equation}\label{eq:gkgammarec}
g_j(x)= \frac{(1+x)^2 g_{j-1}(x)-g_{j-1}(0)}{x}\,, \quad j\ge 2\,.
\end{equation}
From this recursive construction one deduces that $g_j(x)$ is a polynomial of degree $j-1$ with positive coefficients.

Next we show $g_k(x)\le 4^k$. Clearly we have $g_1(x)\le 4$ and $g_2(x)\le 4^2$. Therefore assume
\begin{equation*}
\| g_{k-1}\|_{[0,1]} := \sup_{y\in [0,1]} |g_{k-1}(y)| \le 4^{k-1}\,.
\end{equation*}
Then for $x\in [0,1]$,
\begin{equation*}
\begin{split}
g_k(x) &= \frac{x (2+x) g_{k-1}(x)+g_{k-1}(x)-g_{k-1}(0)}{x}\\
&\le (2+x) g_{k-1}(x)+g_{k-1}(1) \le (2+x+1) \| g_{k-1}\|_{[0,1]}\\
&\le (3+x) 4^{k-1} \le 4^k\,.
\end{split}
\end{equation*}
In view of \eqref{eq:gkgamma}, this finishes the proof.
\end{proof}

%--------------------------------------------------------------------------------

\noindent{\bf Acknowledgments}
A large part of this paper was written during a research visit of JH at the Statistics Department of 
Columbia University in the City of New York. JH thanks the Department for its great hospitality.
JH and TM  thank Richard A. Davis, Olivier Wintenberger and Gennady Samorodnitsky for valuable discussions on the subject.
\bibliography{libraryjohannes}

%Use this \end{document} to get the version without the proof after the bibliography

\end{document}